\numberwithin{equation}{section}
\crefname{equation}{}{} 
\Crefname{equation}{}{}
\newtheorem{theorem}{Theorem}[section]
\newtheorem{lemma}[theorem]{Lemma}
\newtheorem{proposition}[theorem]{Proposition}
\theoremstyle{definition}
\newtheorem{definition}[theorem]{Definition}
\theoremstyle{remark}
\newtheorem{remark}[theorem]{Remark}
\newcommand{\N}{\mathbb{N}}
\newcommand{\Z}{\mathbb{Z}}
\newcommand{\R}{\mathbb{R}}
\newcommand{\C}{\mathbb{C}}
\newcommand{\g}{\mathfrak{g}}
\renewcommand{\S}{\mathcal{S}}
\DeclarePairedDelimiter\norm{\lVert}{\rVert}
\renewcommand{\Re}{\operatorname{Re}}
\renewcommand{\Im}{\operatorname{Im}}
\DeclareMathOperator{\supp}{supp}
\DeclareMathOperator{\id}{id}
\DeclareMathOperator{\ran}{ran}
\DeclareMathOperator{\chr}{char}
\DeclareMathOperator{\tr}{tr}
\newcommand{\tL}[2]{\Delta^{(#1)}_{\R^{#2}}}
\newcommand{\atL}[3]{\Delta^{\mathbf #1,\mathbf #2}_{\R^{#3}}}
\newcommand{\proj}[3]{\Pi_{\mathbf{#1}}^{\mathbf{#2},\mathbf{#3}}}
\newcommand{\Lag}[3]{\varphi_{\mathbf{#1}}^{\mathbf{#2},\mathbf{#3}}}
\newcommand{\eigv}[3]{\lambda_{\mathbf{#1}}^{\mathbf{#2},\mathbf{#3}}}
\newcommand{\eigvp}[2]{\lambda_{\mathbf{#1}}^{#2}}
\newcommand{\eigf}[4]{\Phi_{#1,#2}^{\mathbf{#3},\mathbf{#4}}}
\newcommand{\cluster}[3]{P_{#1}^{\mathbf{#2},\mathbf{#3}}}
   \def\MR#1{}
\begin{document}

\title[Restriction type estimates on two-step Lie groups]{Restriction type estimates on general two-step stratified Lie groups}
\author{Lars Niedorf}
\address{Department of Mathematics, University of Wisconsin-Madison,
480 Lincoln Dr, Madison, WI-53706, USA}
\email{niedorf@wisc.edu}
\date{December 12, 2024}
\thanks{The author gratefully acknowledges the support by the Deutsche Forschungsgemeinschaft (DFG) through grant MU 761/12-1.}

\begin{abstract}
We prove restriction type estimates for sub-Laplacians on general two-step stratified Lie groups. The core of our approach is to use spectral cluster estimates to effectively control the eigenvalue distribution of a family of anisotropic twisted Laplacians.
\end{abstract}

\subjclass[2020]{42B15, 22E25, 22E30, 43A85}
\keywords{Two-step stratified Lie group, sub-Laplacian, restriction type estimate, spectral cluster}

\maketitle

\section{Introduction}

\subsection{Restriction estimates}

Let $S\subseteq\R^n$ be a smooth hypersurface. The concept of Fourier restriction estimates was introduced by Stein in the seventies (for general sub-manifolds), which are a priori estimates of the form
\begin{equation}\label{eq:Fourier-p-q}
\|\hat f|_S\|_{L^q(S,\sigma)} \le C_p \norm{f}_{L^p(\R^n)} \quad \text{for all } f\in \mathcal S(\R^n)
\end{equation}
where $p,q\in [1,\infty]$, $\sigma$ is the surface measure on $S$, and $\mathcal S(\R^n)$ is the space of Schwartz functions.
For the sphere $S = S^{n-1} = \{ \xi \in\R^n : |\xi| = 1 \}$ and $n\ge 2$, the \textit{restriction conjecture} \cite{St79,Ta04} states that the restriction estimate \cref{eq:Fourier-p-q} holds if and only if
\[
1\le p < \frac{2n}{n+1} \quad \text{and}\quad 1\le q\le \frac{n-1}{n+1} p',
\]
where $p'=(1-1/p)^{-1}$ is the dual exponent of $p$. The restriction conjecture for the sphere is solved in dimension $n=2$, but is widely open for $n\ge 3$, although many partial results are available, including work by Bourgain \cite{Bo91}, Tao \cite{Ta03}, and Guth \cite{Gu16}. In the range $1\le p \le 2(n+1)/(n+3)$, due to the work of Stein and Tomas \cite{St86,To75} (see also \cite{St77,Gr81}), the restriction conjecture, which is then the estimate
\[
\|\hat f|_{S}\|_{L^2(S,\sigma)} \le C_p \norm{f}_{L^p(\R^n)} \quad \text{for all } f\in \mathcal S(\R^n),
\]
is known to hold for the sphere, and also for more general classes of surfaces with non-vanishing Gaussian curvature. (Note that  $(n-1)p'/(n+1) =2$ for the endpoint $p=2(n+1)/(n+3)$. The $L^p$-$L^q$ restriction estimate \cref{eq:Fourier-p-q} follows from the $L^p$-$L^2$ estimate via interpolation with the trivial $L^1$-$L^\infty$ estimate.)

It is by now well-known that restriction estimates are closely related to many other problems in harmonic analysis and PDE, such as spectral multiplier estimates and Bochner--Riesz summability for the Laplacian, the Kakeya conjecture, and local smoothing, see \cite{Ta99}.

Given these close connections for the Laplacian, one hopes that many of these will carry over to elliptic or even sub-elliptic differential operators. One class of operators that has received a lot of attention in the last few decades are second-order differential operators which are a sum of squares of the form
\begin{equation}\label{eq:sum-of-squares}
L = -(X_1^2+\dots+X_k^2),
\end{equation}
where $X_1,\dots,X_k$ are some vector fields on a smooth manifold $M$. These operators can be seen as a natural replacement for the Laplacian when passing to a sub-elliptic setting. Due to a celebrated theorem of Hörmander \cite{Hoe67}, the operator in \eqref{eq:sum-of-squares} is hypoelliptic whenever the iterated commutators
\[
X_{j_1},[X_{j_1},X_{j_2}],[X_{j_1},[X_{j_2},X_{j_3}]],\dots \quad \text{where } j_i\in\{1,\dots,k\}
\]
up to a certain order span the tangent space of the underlying manifold $M$ at every point. Rothschild and Stein \cite{RoSt76} pointed out that operators of the form \eqref{eq:sum-of-squares} satisfying Hörmander's bracket generating condition can be locally approximated by left-invariant sub-Laplacians on stratified Lie groups, after the initial vector fields $X_1,\dots,X_k$ on $M$ are lifted to vector fields $\smash{\tilde X_1},\dots,\smash{\tilde X_k}$ on some larger space~$\smash{\tilde M}$ by some freeing procedure.

When asking for analogs of restriction estimates in the setting of stratified Lie groups, things change dramatically compared to the Euclidean setting. For instance, for the sub-Laplacian on the Heisenberg group, the only possibility of having restriction estimates is the case where $p=1$ (unless one passes to mixed $L^p$-norms), which is due to the fact that the Heisenberg group admits a one-dimensional center \cite{Mue90}. However, the situation improves for left-invariant sub-Laplacians on Heisenberg type groups with center of dimension $d_2>1$, where it is possible to prove restriction estimates for the Stein--Tomas range $1\le p\le 2(d_2+1)/(d_2+3)$, see \cite{Th91,LiZh11}, but it is not yet known if such restriction estimates hold beyond the class of Heisenberg type groups. Attempts to prove restriction estimates for the larger class of Métivier groups where the group satisfies a certain non-degeneracy condition have unfortunately failed so far (see the remarks in \cite{Ca18} and \cite{Ni23} on \cite{CaCi13,LiZh18}).

\subsection{Statement of the main results}

The purpose of this paper is to establish restriction \textit{type} estimates for the whole class of two-step stratified Lie groups.

Before stating the main results, we need to introduce some notation. Let $G$ be a two-step stratified Lie group, that is, a connected, simply connected nilpotent Lie group whose Lie algebra $\g$ (which is the tangent space at the identity of $G$) admits a decomposition $\g=\g_1\oplus\g_2$ into two non-trivial subspaces $\g_1,\g_2\subseteq\g$, where $[\g_1,\g_1]=\g_2$ and $\g_2\subseteq\g$ is contained in the center of $\g$. In the following, we refer to $\g_1$ and $\g_2$ as being the \textit{first} and \textit{second layer} of $\g$, respectively. Let
\begin{equation}\label{eq:dimensions}
d_1=\dim \g_1\ge 1,\quad d_2=\dim \g_2\ge 1\quad \text{and}\quad d=\dim \g.
\end{equation}
Given a basis $X_1,\dots,X_{d_1}$ of the first layer $\g_1$, we identify each element of the basis by a left-invariant vector field on $G$ via the Lie derivative, and consider the associated sub-Laplacian $L$, which is the second-order differential operator
\begin{equation}\label{def:sub-Laplacian}
L = -(X_1^2+\dots+X_{d_1}^2).
\end{equation}
We also choose a basis $U_1,\dots,U_{d_2}$ of the second layer $\g_2$. Let $\langle\cdot,\cdot\rangle$ be the inner product rendering $X_1,\dots,X_{d_1},U_1,\dots,U_{d_2}$ an orthonormal basis of $\g$. The inner product $\langle\cdot,\cdot\rangle$ induces a norm on the dual $\g_2^*$ which we denote by $|\cdot|$. For $\mu\in\g_2^*$, let $J_\mu$ be the skew-symmetric endomorphism such that
\begin{equation}\label{eq:skew-form-ii}
\mu([x,x']) =\langle J_\mu x,x'\rangle,\quad x,x'\in\g_1.
\end{equation}
Then $G$ is a \textit{Heisenberg type group} if the endomorphisms $J_\mu$ are orthogonal for all $\mu\in \g_2^*$ of length 1, which means that
\[
J_\mu^2 = - |\mu|^2 \id_{\g_1}\quad\text{for all } \mu\in \g_2^*.
\]

The restriction type estimates in \cref{thm:restriction-type} are stated in terms of the norms 
\[
\norm{F}_{M,2} = \bigg(\frac{1}{M} \sum_{K\in\Z} \,\sup _{\lambda \in [\frac{K-1}{M}, \frac{K}{M})}|F(\lambda)|^2\bigg)^{1 / 2},\quad M\in(0,\infty)
\]
which were introduced by Cowling and Sikora in \cite{CoSi01}. We introduce an additional truncation along the spectrum of the operator
\[
U=(-(U_1^2+\dots+U_{d_2}^2))^{1/2}.
\]
Due to this additional truncation, we also refer to the corresponding restriction type estimates as \textit{truncated} restriction type estimates. A similar truncation is used in \cite{Ni23} for sub-Laplacians in Heisenberg type groups, and in \cite{Ni22} in the related setting of Grushin operators.


Our truncated restriction type theorem reads as follows.
Given any Euclidean space of dimension $n\in\N\setminus\{0\}$, we denote by
\[
p_n := \frac{2(n+1)}{n+3}
\]
the \textit{Stein--Tomas exponent} associated with that space.

\begin{theorem}\label{thm:restriction-type}
Let $G$ be a two-step stratified Lie group, and, as in \cref{def:sub-Laplacian}, let $L$ be a sub-Laplacian on $G$. Suppose that $1\le p\le \min\{p_{d_1},p_{d_2}\}$ with dimensions $d_1,d_2$ as in \eqref{eq:dimensions}. If $F:\R\to\C$ is a bounded Borel function supported in a compact subset $A\subseteq (0,\infty)$ and $\chi:(0,\infty)\to\C$ is a smooth function with compact support, then
\begin{equation}\label{eq:intro-restriction}
\norm{ F(L)\chi(2^\ell U) }_{p\to 2}
\le C_{A,p,\chi} 2^{-\ell d_2(\frac 1 p - \frac 1 2)} \|F\|_2^{1-\theta_p} \norm{F}_{2^{\ell},2}^{\theta_p} \quad \text{for all }\ell\in\Z,
\end{equation}
where $\theta_p\in [0,1]$ satisfies $1/p = (1-\theta_p) + \theta_p/\min\{p_{d_1},p_{d_2}\}$.
\end{theorem}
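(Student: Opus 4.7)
The plan is to combine a Plancherel-type identity along the centre $\g_2$ with uniform spectral cluster estimates for the resulting fibrewise anisotropic twisted Laplacians on the first layer. Concretely, the partial Fourier transform in the $\g_2$-directions (equivalently, the group Fourier transform based on Schrödinger representations) turns $L$ into a measurable field of operators $\{L_\mu\}_{\mu\in\g_2^*}$, each of which is an anisotropic harmonic oscillator on $\R^{d_1}$ whose spectrum consists of sums of the form $\sum_{j}(2k_j+1)|\lambda_j(\mu)|$, where the $|\lambda_j(\mu)|$ are the moduli of the non-zero eigenvalues of the skew-symmetric endomorphism $J_\mu$ from \cref{eq:skew-form-ii}. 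Since $\chi$ has compact support, the truncation $\chi(2^\ell U)$ restricts attention to $|\mu|\lesssim 2^{-\ell}$, a region of $\mu$-volume $\sim 2^{-\ell d_2}$ in $\g_2^*\cong\R^{d_2}$; this volume will furnish the prefactor $2^{-\ell d_2(1/p-1/2)}$ on the right-hand side.

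The estimate \cref{eq:intro-restriction} is obtained by interpolation between the two endpoint cases $\theta_p=0$ and $\theta_p=1$. The first corresponds to $p=1$, where the claim reduces to
\[
\norm{F(L)\chi(2^\ell U)}_{1\to 2} \lesssim 2^{-\ell d_2/2}\,\norm{F}_2.
\]
By left-translation invariance this is just the $L^2$-norm of the convolution kernel, which by Plancherel on $G$ is a weighted sum of $|F(\lambda)|^2$ over the joint spectrum of $(L,U)$ against Plancherel measure, supported in $|\mu|\lesssim 2^{-\ell}$. Counting eigenvalues with multiplicity produces the bound immediately.

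The second endpoint, $p=\min\{p_{d_1},p_{d_2}\}$ (so $\theta_p=1$), is the analytic core. Here one decomposes $F=\sum_{K\in\Z}F_K$ with $F_K$ supported in $[(K-1)2^{-\ell},K\,2^{-\ell})$, so that $F_K(L_\mu)$ factors through a single spectral cluster of $L_\mu$ of width $\sim 2^{-\ell}$ (which is also the eigenvalue spacing in the regime $|\mu|\sim 2^{-\ell}$). For each such cluster one invokes a Stein--Tomas-type estimate: the exponent $p_{d_1}$ governs the Hermite expansion along the first-layer variables on $\R^{d_1}$, while integration of $\mu$ over a ball of radius $2^{-\ell}$ in $\R^{d_2}$ supplies the exponent $p_{d_2}$. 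The hypothesis $p\le\min\{p_{d_1},p_{d_2}\}$ is exactly what is required so that both Stein--Tomas estimates are simultaneously available. Summing in $K$ via Cauchy--Schwarz produces the norm $\norm{F}_{2^\ell,2}$, and interpolation via Stein's theorem then yields the general case.

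The main obstacle is the lack of uniformity of the family $\{L_\mu\}$: the eigenvalues $|\lambda_j(\mu)|$ can coalesce or vanish on lower-dimensional strata of $\g_2^*$, so the $L_\mu$ do not form a smooth field of operators near such strata, and the naïve cluster bounds degenerate. To obtain cluster estimates uniform in $\mu$, I would dyadically partition $\g_2^*$ into sectors on which the moduli $|\lambda_j(\mu)|$ are pairwise comparable and uniformly separated in ratio, prove the anisotropic cluster estimate separately on each sector after an adapted rescaling, and absorb the geometric loss into the sum over sectors. Establishing these uniform spectral cluster bounds for the anisotropic twisted Laplacians is the technical heart of the argument; once they are in hand, the Plancherel--Stein--Tomas computation described above closes the proof of \cref{eq:intro-restriction}.
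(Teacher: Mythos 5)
Your overall architecture matches the paper: conjugate by a partial Fourier transform along $\g_2$ to obtain the fibrewise twisted Laplacians $L^\mu$, prove the $p=1$ endpoint by a Plancherel identity for the convolution kernel, prove the $p=\min\{p_{d_1},p_{d_2}\}$ endpoint by combining a spectral cluster estimate for $L^\mu$ on the first layer with the Stein--Tomas restriction estimate for the sphere $S^{d_2-1}$ in the $\mu$-variable, and interpolate. Your cluster decomposition at scale $\sim|\mu|\sim 2^{-\ell}$ and the appearance of the Cowling--Sikora norm via Cauchy--Schwarz over clusters are also as in the paper.

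The gap is in your proposed remedy for the lack of uniformity of the family $\{L^\mu\}$, which is precisely the technical core of the argument. You suggest dyadically partitioning $\g_2^*$ into sectors where the $|\lambda_j(\mu)|$ have controlled mutual ratios, rescaling adaptively on each sector, proving a cluster estimate per sector, and ``absorbing the geometric loss into the sum over sectors.'' This is both vague and, as stated, unlikely to close: near the strata where some $b_n^\mu\to 0$ or two $b_n^\mu$ coalesce, the number of such sectors is unbounded, the adapted rescalings change the effective cluster width, and a naive Cauchy--Schwarz over sectors introduces at least a logarithmic loss which would ruin the endpoint exponent $(K+1)^{\frac{d_1}{2}(\frac1p-\frac12)-\frac12}$. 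You do not explain why the per-sector constants are summable, nor how to prove the per-sector cluster estimate itself. The paper takes a fundamentally different route: it establishes a \emph{single} spectral cluster estimate, \cref{thm:cluster}, uniform in $\mathbf b$ over the entire compact parameter set $\mathbf B\subseteq[0,\infty)^N$ \emph{including} the degenerate boundary $b_n=0$, so that no sector decomposition is needed at all. That uniformity is proved either by (a) subordination to the heat semigroup plus a Mehler-type formula \eqref{eq:heat-kernel}, whose constants are manifestly continuous in $\mathbf b$ down to $\mathbf b=0$ (range $p<p_{d_1-1}$), or (b) the Koch--Tataru dispersive estimates for the Weyl-quantized symbol $\lambda^{-1}\sigma^{\mathbf b}\eta$, after localizing to scale $\kappa=(K+1)^{1/2}$ and verifying that the principal-type and curvature constants required there depend only on bounds that remain uniform on $\mathbf B$ (full range $p\le p_{d_1}$). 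Without a substitute for this uniform cluster estimate, your argument cannot close, and identifying and proving that estimate is where the real work lies.

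Two smaller inaccuracies: the first-layer projections are Laguerre/special-Hermite projections for a twisted Laplacian, not plain Hermite projections; and the second-layer Stein--Tomas input is the restriction estimate for the unit sphere $S^{d_2-1}$ integrated over radii $r\sim2^{-\ell}$, not an estimate over a ball.
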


As we will see later in \cref{rem:ell_0}, one has $F(L)\chi(2^\ell U)=0$ for $\ell$ being small enough in \eqref{eq:intro-restriction}, so  \cref{thm:restriction-type} should actually be read as a statement for all $\ell\in [-\ell_0,\infty)$, where $\ell_0\in\N$ is a constant depending on the matrices $J_\mu$, the inner product $\langle\cdot,\cdot\rangle$ on $\g$, and the compact subset $A\subseteq (0,\infty)$ above. 

Although the above theorem is valid for all two-step stratified Lie groups, the presence of the norm $\norm{F}_{2^{\ell},2}$ in \eqref{eq:intro-restriction} imposes a slight drawback. By (3.19) and (3.29) of \cite{CoSi01} (or alternatively Lemma 3.4 of \cite{ChHeSi16}), for $s>1/2$, the norm $\Vert\cdot\Vert_{M,2}$ can be estimated by
\begin{equation}\label{eq:norm-1}
\norm{F}_{L^2}\le \norm{F}_{M,2} \le C_s \big( \norm{F}_{L^2} + M^{-s} \norm{F}_{L^2_s} \big),
\end{equation}
which means that $\Vert\cdot\Vert_{M,2}$ is stronger than the $L^2$-norm. Ignoring for a moment the additional truncation, the restriction type estimate \eqref{eq:intro-restriction} with $\norm{F}_{2^{\ell},2}$ replaced by $\norm{F}_{L^2}$ would be equivalent (see for instance \cite[Proposition 4.1]{SiYaYa14}) to a restriction type estimate for the Strichartz projectors $\mathcal P_\lambda$, which are formally given by $\mathcal P_\lambda=\delta_\lambda(L)$, where $\delta_\lambda$ is the Dirac delta distribution at $\lambda\in\R$. This would recover some of the results of the (erroneous) article \cite{CaCi13}. However, in its current form, \eqref{eq:intro-restriction} is not sufficient to push restriction estimates beyond the class of Heisenberg type groups.

On the other hand, the truncated restriction type estimates can still be used to prove spectral multiplier estimates for the whole class of Métivier groups (which is the class of two-step stratified Lie groups where the matrices $J_\mu$ are invertible for all $\mu\in\g_2^*\setminus\{0\}$), by exploiting the fact that the dimension $d_1$ of the first layer is in general much larger than the dimension $d_2$ of the second layer if $G$ is a Métivier group. This is done in a follow-up paper.

\subsection{Structure of the paper}

In \cref{sec:sketch}, we briefly sketch the proof of \cref{thm:restriction-type}. In \cref{sec:spectral-theory}, we analyze the spectral decomposition of the sub-Laplacian $L$ and the vector fields $-iU_1,\dots,-iU_{d_2}$, which admit a joint functional calculus. In particular, we show that the sub-Laplacian $L$ corresponds to a family of anisotropic twisted Laplacians by a partial Fourier transform along the second layer.

In \cref{sec:cluster}, we derive spectral cluster estimates for those anisotropic twisted Laplacians, which are then subsequently exploited in \cref{sec:restriction} to prove the restriction type estimates of \cref{thm:restriction-type}.

\subsection{Notation}

We let $\N=\{0,1,2,\dots\}$. The indicator function of a subset $A$ of some measurable space will be denoted by $\mathbf{1}_A$. We write $A\lesssim B$ if $A\le C B$ for a constant $C$. If $A\lesssim B$ and $B\lesssim A$, we write $A\sim B$. Given two suitable functions $f$ and $g$ on a two-step stratified Lie group $G$, let $f*g$ denote their group convolution given by
\[
f*g(x,u) = \int_{G} f(x',u')g\big((x',u')^{-1}(x,u)\big) \,d(x',u'),\quad (x,u)\in G,
\]
where $d(x',u')$ denotes the Lebesgue measure on $G$. The space of Schwartz functions on $\R^n$ will be denoted by $\mathcal S(\R^n)$. For $s\ge 0$ and $q\in [1,\infty]$, we denote by $L^q_s(\R)\subseteq L^q(\R)$ the Sobolev space of fractional order $s$.

\subsection{Acknowledgments}

I am deeply grateful to my advisor Detlef Müller for his unwavering support and many fruitful discussions about the subject of this work. I also wish to express my thanks to Alessio Martini for kindly hosting me for a week at the Mathematics Department of the Politecnico di Torino and for the many mathematical discussions during my visit.


\section{Sketch of the proof} \label{sec:sketch}

As in \cite{LiZh11,ChOu16,Ni22,Ni23}, the proof of the truncated restriction type estimate in \cref{thm:restriction-type} relies on combining two restriction (type) estimates, namely an $L^p$-$L^2$ estimate for the spectral projections associated with the twisted Laplacian on the first layer, and a restriction estimate for the sphere on the second layer, whence we require $1\le p\le \min\{p_{d_1},p_{d_2}\}$ as a condition for the range of $p$. More precisely, conjugating the sub-Laplacian $L$ by the partial Fourier transform given by
\[
f^\mu(x) = \int_{\g_2} f(x,u) e^{- i \langle\mu, u\rangle} \, du,\quad x\in \g_1,\mu\in \g_2^*
\]
transforms the sub-Laplacian $L$ into a family $(L^\mu)_{\mu\in\g_2^*}$ of twisted Laplacians $L^\mu$ on the first layer $\g_1$. Assuming here for the sake of simplicity that $G$ is a Métivier group, then, at least on some non-empty Zariski-open subset $\g_{2,r}^*\subseteq \g_2^*$, each twisted Laplacian $L^\mu$ admits an orthonormal basis of eigenfunctions associated with the eigenvalues
\[
\eigv{k}{b^\mu}{r} = \sum_{n=1}^N \left(2k_n+r_n\right)b_n^\mu,\quad \textbf{k}=(k_1,\dots,k_N)\in\N^N
\]
where $N\in\N\setminus\{0\}$ and $\textbf{r}=(r_1,\dots,r_N)\in (\N\setminus\{0\})^N$ are fixed parameters independent of $\mu$, and $\mu\mapsto \textbf b^\mu=(b_1^\mu,\dots,b_N^\mu)\in (0,\infty)^N$ is a function homogeneous of degree~1, which is smooth on $\g_{2,r}^*$ and extends to a continuous function on $\g_2^*$.

The special case where $G$ is a Heisenberg type group corresponds to taking $N=1$ and $\textbf{b}^\mu=|\mu|$. Then the eigenvalues of the twisted Laplacian $L^\mu$ are given by $|\mu|[k]$, $k\in\N$, where $[k]:=2k+d_1/2$. In \cite{LiZh11,Ni23} (and similar in \cite{ChOu16,Ni22}), the proofs of the restriction type estimates there rely on $L^p$-$L^2$ spectral projection estimates of the form
\begin{equation}\label{eq:intro-projection}
\|\mathbf{1}_{\{|\mu|[k]\}}(L^\mu)\|_{L^p(\g_1)\to L^2(\g_1)} \le C_p \left|\mu\right|^{\frac {d_1} 2 (\frac 1 p - \frac 1 2)} [k]^{\frac {d_1} 2 (\frac 1 p - \frac 1 2)-\frac 1 2}, \quad k\in \N,
\end{equation}
where $\mathbf{1}_{\{|\mu|[k]\}}:\R\to \{0,1\}$ denotes the indicator function of the point $|\mu|[k]$.

Writing $\mu\in \g_2^*$ in polar coordinates, that is, $\mu=\rho \omega$, where $\rho\in [0,\infty)$ and $\omega\in S^{d_2-1}$, note that $\textbf{b}^\mu$ does not depend on $\omega$ in the Heisenberg type case. To adapt the approach of \cite{Ni23} to setting of arbitrary two-step Lie groups and handle the dependence on the parameter $\omega$, we employ spectral cluster estimates of the form
\begin{equation}\label{eq:intro-cluster}
\norm{\mathbf{1}_{[K|\mu|,(K+1)|\mu|)}(L^\mu) }_{L^p(\g_1)\to L^2(\g_1)} \le C_p \left|\mu\right|^{\frac {d_1} 2 (\frac 1 p - \frac 1 2)} (K+1)^{\frac {d_1} 2 (\frac 1 p - \frac 1 2)-\frac 1 2}
\end{equation}
for $K\in\N$. These spectral cluster estimates are inspired by the work of Sogge \cite[Chapter 5]{So93}. Although the spectral projection estimates \eqref{eq:intro-projection} and the spectral cluster estimates \eqref{eq:intro-cluster} are equivalent in the case of Heisenberg type groups, using spectral cluster estimates in the setting of arbitrary two-step Lie groups leads to restriction type estimates in terms of the Cowling--Sikora norms $\|\cdot\|_{M,2}$ in place of the $L^2$-norm.

\section{Sub-Laplacians on two-step stratified Lie groups} \label{sec:spectral-theory}

\subsection{Joint functional calculus}

Let $G$ be a two-step stratified Lie group. Then its Lie algebra $\g$, which is the tangent space $T_e G$ at the identity $e\in G$, admits a decomposition $\g=\g_1\oplus\g_2$, where $[\g_1,\g_1]=\g_2$ and $\g_2\subseteq \g$ is contained in the center of~$\g$. Using exponential coordinates, we identify $G$ with its Lie algebra $\g$. The group multiplication is then given by
\[
(x,u)(x',u')=\left(x+x',u+u'+\tfrac 1 2 [x,x']\right),\quad x,x'\in \g_1,u,u'\in \g_2.
\]
We choose bases $X_1,\dots,X_{d_1}$ and $U_1,\dots,U_{d_2}$ of $\g_1$ and $\g_2$, respectively. By means of these bases, we identify $\g_1\cong\R^{d_1}$ and $\g_2\cong\R^{d_2}$. Let $\langle \cdot,\cdot\rangle$ denote the inner product with respect to which $X_1,\dots,X_{d_1},U_1,\dots,U_{d_2}$ becomes an orthonormal basis of $\g$. As usual, the Lie algebra $\g$ will also be identified with the Lie algebra of smooth left-invariant vector fields on $G$ via the Lie derivative. Then, given a smooth function $f$ on $G$, we have
\begin{align*}
X_j f(x,u)
& = \frac{d}{dt} f\big((x,u)(tX_j,0)\big)\big|_{t=0} \\
& = \partial_{x_j} f(x,u) + \frac 1 2 \sum_{k=1}^{d_2}\left\langle U_k,[x,X_j]\right\rangle \partial_{u_k} f(x,u),\\
U_k f(x,u)
& = \partial_{u_k} f(x,u).
\end{align*}
The sub-Laplacian $L$ associated with the vector fields $X_1,\dots,X_{d_1}$ is given by
\[
L = -\left(X_1^2+\dots+X_{d_1}^2\right).
\]

For $f\in L^1(G)$ and $\mu\in\g_2^*$, let $f^\mu$ denote the $\mu$-section of the partial Fourier transform along the second layer $\g_2$ given by
\[
f^\mu(x) = \int_{\g_2} f(x,u) e^{- i \langle\mu, u\rangle} \, du,\quad x\in \g_1.
\]
Up to some constant, this defines an isometry $\mathcal F_2:L^2(\g_1\times \g_2)\to L^2(\g_1\times \g_2^*)$. Given $f\in L^2(G)$, we also write $f^\mu=(\mathcal F_2 f)(\cdot,\mu)$ (for almost all $\mu\in\g_2^*$) in the following.
For $f\in \mathcal S(G)$, we have $(X_j f)^\mu = X_j^\mu f^\mu$, where\[
X_j^\mu = \partial_{x_j} + \tfrac i 2 \omega_\mu(x,X_j),
\]
where $\omega_\mu$ denotes the bilinear form given by
\[
\omega_\mu(x,x') = \mu([x,x']),\quad x,x'\in\g_1.
\]
Moreover, let $L^\mu$ be the second order differential operator defined by
\[
L^\mu = -\left((X_1^\mu)^2+\dots+(X_{d_1}^\mu)^2\right).
\]
We call $L^\mu$ the \textit{$\mu$-twisted Laplacian} on $\g_1$. Direct computation shows that
\[
(Lf)^\mu=L^\mu f^\mu
 = \left(-\Delta_x + \tfrac 1 4 |J_\mu x|^2 - i\, \omega_\mu(x,\nabla)\right)f^\mu,
\]
where $J_\mu$ is again the endomorphism given by
\[
\langle J_\mu x,x' \rangle = \omega_\mu(x,x'),\quad x,x'\in\g_1,
\]
and $\omega_\mu(x,\nabla)$ is a short-hand notation for the operator
\[
\omega_\mu(x,\nabla) = \sum_{j=1}^{d_1}\omega_\mu(x,X_j)\partial_{x_j}.
\]

The operators $L,-iU_1,\dots,-iU_{d_2}$ form a system of formally self-adjoint, left-invariant and pairwise commuting differential operators, whence they admit a joint functional calculus \cite{Ma11}. Let $\textbf{U}$ be the vector of differential operators
\[
\mathbf U = (-iU_1,\dots,-iU_{d_2}).
\]
Since the joint functional calculus is compatible with unitary representation theory, the $\mu$-sections of the partial Fourier transform and the joint functional calculus of $L$ and $\textbf{U}$ are compatible as well.

\begin{proposition}\label{prop:joint-calculus}
If $F:\R\times\R^{d_2}\to\C$ is a bounded Borel function, then
\begin{equation}\label{eq:joint-calculus}
\left(F(L,\mathbf U)f\right)^\mu = F(L^\mu,\mu) f^\mu
\end{equation}
for all $f\in L^2(G)$ and almost all $\mu\in\g_2^*$.
\end{proposition}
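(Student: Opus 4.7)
The plan is to exploit the fact that the partial Fourier transform $\mathcal{F}_2$ implements a unitary equivalence $L^2(G)\cong \int^\oplus_{\g_2^*} L^2(\g_1)\,d\mu$, and to verify the intertwining relation first on exponentials, then to extend by the multiplicative structure of the joint functional calculus. Throughout I would use that $L,-iU_1,\dots,-iU_{d_2}$ are all essentially self-adjoint on $\mathcal S(G)$, that $\mathcal S(G)$ is a common core, and that the joint functional calculus of Martini \cite{Ma11} provides a $*$-homomorphism $F\mapsto F(L,\mathbf U)$ from bounded Borel functions on $\R\times\R^{d_2}$ to bounded operators on $L^2(G)$.

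First, for $f\in\mathcal S(G)$ and $s\in\R^{d_2}$, a direct change of variables gives
\[
(e^{i\langle s,\mathbf U\rangle} f)^\mu(x) = \int_{\g_2} f(x,u+s)\, e^{-i\langle \mu,u\rangle}\,du = e^{i\langle s,\mu\rangle} f^\mu(x),
\]
since $e^{is_k(-iU_k)}=e^{s_k U_k}$ is translation by $s_k$ in the $k$-th second-layer direction. Next, for the $L$-evolution, I would observe that for $f\in\mathcal S(G)$ both functions $t\mapsto (e^{itL}f)^\mu$ and $t\mapsto e^{itL^\mu} f^\mu$ lie in $C(\R;L^2(\g_1))$, share the initial datum $f^\mu$, and solve the same Schrödinger equation $i\partial_t g=L^\mu g$ in the weak sense (for the first, this follows from $(Lf)^\mu=L^\mu f^\mu$ already derived above, together with a standard core argument; for the second, it is Stone's theorem applied to $L^\mu$). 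Uniqueness then yields $(e^{itL} f)^\mu=e^{itL^\mu}f^\mu$.

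Since $L$ commutes with each $-iU_k$, the two commuting unitary groups combine to give, for $(t,s)\in\R\times\R^{d_2}$,
\[
(e^{itL}e^{i\langle s,\mathbf U\rangle}f)^\mu = e^{itL^\mu}e^{i\langle s,\mu\rangle}f^\mu,
\]
i.e.\ \cref{eq:joint-calculus} holds for every character $F(\lambda,\nu)=e^{it\lambda}e^{i\langle s,\nu\rangle}$. By Fourier inversion, the identity extends by linearity to $F$ in the image of $\mathcal S(\R\times\R^{d_2})$ under the Fourier transform, hence to a dense subalgebra of $C_0(\R\times\R^{d_2})$.

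Finally, I would upgrade to arbitrary bounded Borel $F$ via a standard approximation argument. Both sides of \cref{eq:joint-calculus} are, as functions of $F$, $*$-homomorphisms into bounded operators: the left-hand side by Martini's joint calculus, the right-hand side (for fixed $\mu$) because $F\mapsto F(\cdot,\mu)(L^\mu)$ is the composition of evaluation-at-$\mu$ with the Borel calculus of the self-adjoint operator $L^\mu$. Having verified agreement on a generating $*$-algebra of $C_0(\R\times\R^{d_2})$, the coincidence extends weakly-$*$ to all bounded Borel $F$ by a monotone class argument, with the identity interpreted $\mu$-a.e.\ via the direct-integral decomposition above. The main technical obstacle I anticipate is the bookkeeping needed to justify that the measurable field $\mu\mapsto L^\mu$ genuinely realizes $L$ as a direct integral under $\mathcal F_2$ (so that Schwartz-level identities determine the self-adjoint closures), but this is essentially contained in the existing framework for joint functional calculi on two-step stratified groups.
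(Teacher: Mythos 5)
Your proof is correct and follows the same verify-then-densify template as the paper, but the mechanism you use differs. The paper's proof is a two-line appeal to Proposition~1.1 of \cite{Mue90}: one writes down the unitary representation $\pi_\mu$ of $G$ on $L^2(\g_1)$ whose group Fourier transform realizes the $\mu$-sections $f^\mu$, and M\"uller's proposition then yields \cref{eq:joint-calculus} for all split multipliers $F(\lambda,\mu)=G(\lambda)H(\mu)$ at once, after which a density argument concludes. Your proposal sidesteps this representation-theoretic black box and re-derives its content by hand: you intertwine the one-parameter unitary groups $e^{itL}$ and $e^{i\langle s,\mathbf U\rangle}$ directly, so your generating family consists of characters rather than split functions. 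What the paper's route buys is brevity; what yours buys is self-containedness and independence from the group Fourier transform formalism.

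Two small points are worth tightening. First, to conclude $\partial_t(e^{itL}f)^\mu=iL^\mu(e^{itL}f)^\mu$ you implicitly invoke the pointwise intertwining $(Lg)^\mu=L^\mu g^\mu$ for $g=e^{itL}f$, but $e^{itL}$ need not preserve $\mathcal S(G)$; the fix is to note that the intertwining extends to $D(\bar L)$ by taking closures (and $e^{itL}$ manifestly preserves $D(\bar L)$), or else to replace the Schr\"odinger group by the heat semigroup $e^{-tL}$ and compare generators, as the paper itself does in the analogous step following \cref{eq:Fourier-rad}. Second, in the final monotone-class step the $\mu$-a.e.\ exceptional set must not be allowed to depend on $F$; the clean way to phrase it is that agreement on a generating $*$-algebra forces the decomposable spectral measure $\int^\oplus E_{L^\mu,\mu}\,d\mu$ to coincide with $\mathcal F_2\,E_{L,\mathbf U}\,\mathcal F_2^{-1}$, after which fiberwise equality holds for all bounded Borel $F$ simultaneously. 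Your direct-integral language gestures at exactly this, so these are clarifications rather than gaps.
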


\begin{proof}
This follows from the arguments of \cite[Section 1]{Mue90}. Let $\mathcal U(L^2(\g_1))$ denote the group of unitary operators on $L^2(\g_1)$. Applying Proposition 1.1 of \cite{Mue90} to the unitary representation $\pi_\mu:G\to\mathcal U(L^2(\g_1))$ given by
\[
\big(\pi_\mu(x,u)\varphi\big)(x')=e^{-i\langle \mu, u\rangle - \frac i 2 \omega_\mu (x,x') }\varphi(x'-x),\quad (x,u)\in G, x' \in \g_1
\]
yields \cref{eq:joint-calculus} if $F:\R\times\R^{d_2}\to\C$ is of the form $F(\lambda,\mu)=G(\lambda)H(\mu)$, where $\lambda\in\R$ and $\mu\in\R^{d_2}$. By a standard density argument, we obtain \cref{eq:joint-calculus} for any bounded Borel function $F$.
\end{proof}

\subsection{Decomposition into blocks of twisted Laplacians}\label{subsec:decomposition}

Next, we transform the twisted Laplacian $L^\mu$ into a more accessible form by choosing an appropriate basis with respect to which $L^\mu$ turns into a differential operator consisting of blocks of rescaled twisted Laplacians with standard symplectic form.

\begin{definition}\label{def:classical-twisted}
For $m\in\N\setminus\{0\}$ and $\lambda>0$, we call the operator
\[
\tL{\lambda}{2m} = -\Delta_z + \tfrac 1 4 \lambda^2 |z|^2 - i \lambda \omega(z,\nabla_z),\quad z \in \R^{2m}
\]
the \textit{$\lambda$-twisted Laplacian} on $\R^{2m}$, where
\begin{equation}\label{eq:standard-symplectic-1}
\omega(z,w)=\omega_{\R^{2m}}(z,w)=\langle Jz, w\rangle_{\R^{2m}}
\end{equation}
is the standard symplectic form induced by the $2m\times 2m$ matrix
\[
J = J_{\R^{2m}} = \begin{pmatrix}
0 & -\mathrm{id}_{\R^{m}} \\
\mathrm{id}_{\R^{m}} & 0 
\end{pmatrix}.
\]
\end{definition}

\begin{definition}
Let $d_1\in\N\setminus\{0\}$. Given parameters $\mathbf b=(b_1,\dots,b_N) \in [0,\infty)^N$ and $\mathbf r=(r_1,\dots,r_N)\in(\N\setminus\{0\})^N$ with $N\in\N\setminus\{0\}$ and $2\left|\textbf{r}\right|_1\le d_1$, we call
\[
\atL{b}{r}{d_1} =
(-\Delta_{\R^{r_0}}) \oplus \tL{b_1}{2r_1} \oplus \dots  \oplus \tL{b_N}{2r_N}
\]
the \textit{anisotropic twisted Laplacian of type $(\mathbf b,\mathbf r)$} on $\R^{d_1}=\R^{r_0}\oplus \R^{2r_1}\oplus\dots\oplus\R^{2r_N}$, where $r_0=d_1-2\left|\textbf{r}\right|_1=d_1-2\left(r_1+\dots+r_N\right)$ and $\Delta_{\R^{r_0}}$ is the Euclidean Laplacian on $\R^{r_0}$.
\end{definition}

\begin{proposition}\label{prop:rotation}
There exist a non-empty, homogeneous Zariski-open subset $\g_{2,r}^*$ of $\g_2^*$, numbers $N\in\N\setminus\{0\}$, $r_0\in\N$, $\mathbf r=(r_1,\dots,r_N)\in(\N\setminus\{0\})^N$, a function $\mu \mapsto \mathbf{b}^\mu = (b^\mu_1,\dots,b^\mu_N)\in [0,\infty)^N$ on $\g_2^*$, functions $\mu\mapsto P_n^\mu$ on $\g_{2,r}^*$ with $P_n^\mu:\g_1\to\g_1$, $n\in\{1,\dots,N\}$, and a function $\mu\mapsto R_\mu\in O(d_1)$ on $\g_{2,r}^*$ such that
\begin{equation}\label{eq:spectral-decomp-0}
-J_\mu^2 = \sum_{n=1}^N (b_n^\mu)^2 P_n^\mu \quad \text{for all } \mu \in \g_{2,r}^*,
\end{equation}
with $P_n^\mu R_\mu=R_\mu P_n$, $J_\mu(\ran P_n^\mu)\subseteq \ran P_n^\mu$ for the range of $P_n^\mu$ and
\begin{equation}\label{eq:symplectic-0}
\omega_\mu(P_n^\mu x,P_n^\mu x')=b_n^\mu \, \omega_{\R^{2r_n}}(P_n R_\mu^{-1}x,P_n R_\mu^{-1}x')\quad \text{for all } x,x'\in \g_1
\end{equation}
for all $\mu \in \g_{2,r}^*$ and all $n\in\{0,\dots,N\}$, where $P_n$ denotes the projection from $\R^{d_1}=\R^{r_0}\oplus\R^{2r_1}\oplus \dots\oplus \R^{2r_N}$ onto the $n$-th layer, and, for all bounded Borel functions $F:\R\to\C$ and all $\phi\in L^2(\g_1)$,
\begin{equation}\label{eq:rotation}
(F(L^\mu)\phi)\circ R_\mu = F\big(\atL{b^\mu}{r}{d_1}\big)(\phi\circ R_\mu)\quad \text{in } L^2(\R^{d_1})
\end{equation}
for almost all $\mu\in\g_{2,r}^*$, where
\begin{itemize}
\item[(i)] the functions $\mu \mapsto b^\mu_n$ are homogeneous of degree~$1$ and continuous on $\g_2^*$, real analytic on $\g_{2,r}^*$, and satisfy $b_n^\mu > 0$ for all $\mu \in \g_{2,r}^*$ and $n \in \{1,\dots,N\}$, and $b_n^\mu \neq b_{n'}^\mu$ if $n \neq n'$ for all $\mu \in \g_{2,r}^*$ and $n,n' \in \{1,\dots,N\}$,
\item[(ii)] the functions $\mu\mapsto P_n^\mu$ are (componentwise) real analytic on $\g_{2,r}^*$, homogeneous of degree $0$, and the maps $P_n^\mu$ are orthogonal projections on $\g_1$ of rank $2r_n$ for all $\mu \in \g_{2,r}^*$, with pairwise orthogonal ranges,
\item[(iii)] $\mu\mapsto R_\mu$ is a Borel measurable function on $\g_{2,r}^*$ which is homogeneous of degree~$0$, and there is a family $(U_\ell)_{\ell\in\N}$ of disjoint Euclidean open subsets $U_\ell\subseteq \g_{2,r}^*$ whose union is $\g_{2,r}^*$ up to a set of measure zero such that $\mu\mapsto R_\mu$ is (componentwise) real analytic on each $U_\ell$.
\end{itemize}
\end{proposition}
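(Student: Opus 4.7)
The plan is to spectrally diagonalize the symmetric positive semi-definite endomorphism $-J_\mu^2$ on $\g_1$, to normalize $J_\mu$ on each eigenspace to a standard symplectic form via an orthogonal frame $R_\mu$, and then to read off the conjugation formula for $L^\mu$ by a direct change of variables.

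\textbf{Step 1: spectral structure and definition of $\g_{2,r}^*$.} Since $J_\mu$ is skew-symmetric for $\langle\cdot,\cdot\rangle$, the operator $-J_\mu^2 = J_\mu^T J_\mu$ is symmetric positive semi-definite, and its non-zero eigenvalues have even multiplicity (the non-zero spectrum of $J_\mu$ itself consists of conjugate pairs $\pm ib$). Denote by $(b_1^\mu)^2>\dots>(b_{N(\mu)}^\mu)^2>0$ the distinct positive eigenvalues of $-J_\mu^2$, by $P_n^\mu$ the corresponding spectral projection (of rank $2r_n(\mu)$), and set $r_0(\mu)=\dim\ker J_\mu$. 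Define $\g_{2,r}^*$ as the locus on which the tuple $(N,r_0,r_1,\dots,r_N)$ takes its generic value; this is a non-empty Zariski-open and homogeneous subset of $\g_2^*$, because the multiplicity pattern of the polynomial family $\mu\mapsto -J_\mu^2$ is constant off a real algebraic subvariety (zero set of appropriate minors of $-J_\mu^2 - tI$). The invariance $J_\mu(\ran P_n^\mu)\subseteq \ran P_n^\mu$ and \cref{eq:spectral-decomp-0} are immediate from $[J_\mu,-J_\mu^2]=0$ and the spectral theorem.

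\textbf{Step 2: analytic regularity.} Since $\mu\mapsto -J_\mu^2$ is a polynomial family of symmetric matrices with locally constant multiplicities on $\g_{2,r}^*$, Rellich--Kato perturbation theory yields real analyticity of $(b_n^\mu)^2$ and $P_n^\mu$ on $\g_{2,r}^*$; taking positive square roots preserves analyticity. The homogeneity $J_{t\mu}=tJ_\mu$ forces $b_n^{t\mu}=t\,b_n^\mu$ and $P_n^{t\mu}=P_n^\mu$ for $t>0$, while continuity of $b_n^\mu$ on all of $\g_2^*$ follows from the continuity of the roots of the characteristic polynomial in its coefficients. This gives (i) and (ii).

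\textbf{Step 3: symplectic frame and conjugation formula.} On each $\ran P_n^\mu$, the map $(b_n^\mu)^{-1}J_\mu|_{\ran P_n^\mu}$ is an orthogonal complex structure, so $\ran P_n^\mu$ admits an orthonormal symplectic basis in which $J_\mu|_{\ran P_n^\mu}$ assumes the standard block form $b_n^\mu J_{\R^{2r_n}}$. Assembling such bases for all $n$ together with an orthonormal basis of $\ker J_\mu$ produces $R_\mu\in O(d_1)$ with $P_n^\mu R_\mu = R_\mu P_n$ and \cref{eq:symplectic-0} valid by construction. Locally around any point of $\g_{2,r}^*$, such a frame can be built real analytically from the analytic data $P_n^\mu$ by Gram--Schmidt adapted to the complex structure; a standard measurable-selection argument then yields a Borel $\mu\mapsto R_\mu$ on $\g_{2,r}^*$ which is real analytic on a countable disjoint family $(U_\ell)$ of Euclidean open subsets covering $\g_{2,r}^*$ up to null-sets, establishing (iii). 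Finally, \cref{eq:rotation} is obtained by the change of variables $x=R_\mu y$ in the explicit expression $L^\mu=-\Delta_x+\tfrac14|J_\mu x|^2 - i\,\omega_\mu(x,\nabla)$: the Laplacian is $O(d_1)$-invariant, the potential splits as $\tfrac14\sum_n (b_n^\mu)^2 |P_n y|^2$ by \cref{eq:spectral-decomp-0}, and the drift term splits via \cref{eq:symplectic-0} into the standard symplectic drift on each $\R^{2r_n}$ scaled by $b_n^\mu$, so that the pulled-back operator is exactly $\atL{b^\mu}{r}{d_1}$; \cref{eq:rotation} then extends to bounded Borel $F$ by functional calculus.

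\textbf{Expected main obstacle.} The delicate point is the regularity of $R_\mu$. A globally real analytic symplectic frame on $\g_{2,r}^*$ need not exist because the relevant frame bundle may be topologically non-trivial, which is precisely why only a Borel selection with piecewise analyticity on a partition $(U_\ell)$ can be asserted in (iii). The remaining ingredients, namely the spectral decomposition of skew-symmetric matrices, Rellich--Kato analyticity, and the explicit change of variables, are classical.
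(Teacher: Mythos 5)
Your proposal is correct and follows essentially the same route as the paper's proof: generic spectral decomposition of $-J_\mu^2$ on a homogeneous Zariski-open set, local construction of an analytic orthonormal symplectic frame on each $\ran P_n^\mu$ via the orthogonal complex structure $(b_n^\mu)^{-1}J_\mu|_{\ran P_n^\mu}$, measurable selection to assemble $R_\mu$ with piecewise analyticity on a disjoint open cover, and conjugation to identify the pulled-back operator with $\atL{b^\mu}{r}{d_1}$. The paper delegates the algebraic structure to \cite[Lemmas 4 and 5]{MaMue14b} rather than re-deriving it from perturbation theory, and verifies the conjugation formula via twisted convolutions while noting that a direct change of variables (as you use) is equally valid.
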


\begin{remark}
Given an anisotropic twisted Laplacian $\atL{b}{r}{d_1}$, the parameters $\textbf{b}$ and $\textbf{r}$ are clearly not unique, but they are, up to a permutation of the blocks, if one additionally requires $b_n>0$ and $b_n\neq b_{n'}$ if $n\neq n'$ for all $n,n'\in\{1,\dots,N\}$. However, in property (i) above, it may happen that $b_n^\mu = 0$ or $b_n^\mu = b_{n'}^\mu$ for $n\neq n'$ if $\mu$ lies in the Zariski closed set $\g_2^*\setminus\g_{2,r}^*$.
\end{remark}

\begin{proof}
Since $J_\mu$ is skew-symmetric, the endomorphism $-J_\mu^2=J_\mu^* J_\mu$ is self-adjoint and non-negative. Let $p_\mu(\lambda) = \det(\lambda+J_\mu^2)$ be the characteristic polynomial of $-J_\mu^2$. Then, by \cite[Lemma 4]{MaMue14b}, there exists a non-empty, homogeneous Zariski-open subset $\g_{2,r}^*$ of $\g_2^*$ and numbers $N\in\N\setminus\{0\}$, $r_0\in\N$, $r_1,\dots,r_N\in \N\setminus\{0\}$ such that
\[
p_\mu(\lambda)=\lambda^{r_0} (\lambda-(b_1^\mu)^2)^{2r_1} \cdots (\lambda-(b_N^\mu)^2)^{2r_N}
\]
for all $\mu\in\g_{2,r}^*$, with functions $\mu\mapsto b_n^\mu$ that satisfy the properties of (i). Note that $d_1=r_0+2r_1+\dots+2r_N$. As in \cite[Lemma 5]{MaMue14b}, the factorization of the characteristic polynomial yields
\begin{equation}\label{eq:spectral-decomp}
-J_\mu^2 = \sum_{n=1}^N (b_n^\mu)^2 P_n^\mu \quad \text{for all } \mu \in \g_{2,r}^*,
\end{equation}
where the $P_n^\mu$ are orthogonal projections on $\g_1$ of rank $2r_n$ for all $\mu \in \g_{2,r}^*$, with pairwise orthogonal ranges. The $P_n^\mu$ are real analytic functions of $\mu \in \g_{2,r}^*$, which are homogeneous of degree $0$. From the spectral decomposition \eqref{eq:spectral-decomp}, one deduces that $P_n^\mu=F_{n,\mu}(-J_\mu^2)$ for any (Borel) function such that $F_{n,\mu}(0)=0$ and $F_{n,\mu}((b_{n'}^\mu)^2)=\delta_{n,n'}$ for all $n\in\{1,\dots,N\}$. Choosing for instance $F_{n,\mu}$ as an interpolation polynomial shows that $J_\mu$ and all the projections $P_n^\mu=F_{n,\mu}((-iJ_\mu)^2)$ commute. Thus, $J_\mu(\ran P_n^\mu)\subseteq \ran P_n^\mu$ for the range of $P_n^\mu$, and by \eqref{eq:spectral-decomp},
\begin{equation}\label{eq:H-type-block}
-\big(J_\mu\big|_{\ran P_n^\mu}\big)^2 = (b_n^\mu)^2 \id_{\ran P_n^\mu}.
\end{equation}
Let $J_n^\mu$ denote the restriction of $J_\mu$ onto $\ran P_n^\mu$. We consider $J_n^\mu$ as an operator on the complexification $(\ran P_n^\mu)^\C$ of $\ran P_n^\mu$. Then $iJ_n^\mu$ is self-adjoint and according to \cref{eq:H-type-block}, it admits only the eigenvalues $\pm b_n^\mu$. The spectral projections associated with these two eigenvalues are given by
\[
P_{n,\pm}^\mu=\frac 1 2 \left( \id_{\ran P_n^\mu} \pm \,i\, (b_n^\mu)^{-1} J_n^\mu \right).
\]
Note that the functions $\mu\mapsto P_{n,\pm }^\mu$ are real analytic on $\g_{2,r}^*$. We take some arbitrary basis $v_1,\dots,v_{d_1}$ of $\g_1$ and consider for each $n\in\{1,\dots,N\}$ the set of vectors
\[
P_{n,+}^\mu v_1,\dots,P_{n,+}^\mu v_{d_1} \in (\ran P_n^\mu)^\C .
\]
We may find for any point $\mu_0\in \g_{2,r}^*$ a neighborhood $U_{\mu_0}\subseteq \g_{2,r}^*$ and a set of indices $I_{n,\mu_0}\subseteq \{1,\dots,d_1\}$ with $|I_{n,\mu_0}|=r_n$ such that the vectors $P_{n,+}^\mu v_j$, $j\in I_{n,\mu_0}$ are linearly independent. We denote these vectors (whose choice depends on $\mu_0$) by $w_{n,1}^\mu,\dots,w_{n,r_n}^\mu$. Then the maps $\mu\mapsto w_{n,m}^\mu$ are (componentwise) real analytic functions on $U_{\mu_0}$. Moreover, due to the homogeneity of the projections $P_{n,\pm}^\mu$, the functions $\mu\mapsto w_{n,m}^\mu$ are homogeneous of degree 0. Now applying the Gram-Schmidt process to the basis $w_{n,1}^\mu,\dots,w_{n,r_n}^\mu$ yields an orthonormal basis $\tilde w_{n,1}^\mu,\dots,\tilde w_{n,r_n}^\mu$ of the eigenspace associated with the eigenvalue $b_n^\mu$ of $i J_n^\mu$. Together with the complex conjugates of the orthonormal basis $\tilde w_{n,1}^\mu,\dots,\tilde w_{n,r_n}^\mu$, we obtain an orthonormal basis $v_{n,1}^\mu,\dots,v_{n,2r_n}^\mu$ of $\ran P_n^\mu$ such that
\begin{equation}\label{eq:symplectic}
\omega_\mu(v_{n,m}^\mu ,v_{n,m'}^\mu ) = b_n^\mu \, \omega_{\R^{2r_n}}(e_m,e_{m'})\quad \text{for all } m,m' \in \{1,\dots,2r_n\},
\end{equation}
where $e_m$ is the $m$-th standard basis vector of $\R^{2r_n}$ and $\omega_{\R^{2r_n}}$ denotes the standard symplectic form \cref{eq:standard-symplectic-1}. Note that $\omega_\mu(v_{n,m}^\mu ,v_{n',m'}^\mu )=0$ for $n\neq n'$ by construction. Let $P_0^\mu:=\id_{\g_1}-\left(P_1^\mu+\dots+P_N^\mu\right)$. Then the radical $\mathfrak r_\mu$ of $\omega_\mu$ is given by
\begin{align*}
\mathfrak r_\mu
& = \{x\in \g_1:\omega_\mu(x,x')=0\text{ for all }x'\in\g_1\}\\
& = \ker J_\mu  = \ker J_\mu^2 = \ran P_0^\mu.
\end{align*}
Hence we may choose $v_{0,1}^\mu,\dots,v_{0,r_0}^\mu\in \mathfrak r_\mu$ such that all the $v_{n,m}^\mu$, $n\in\{0,\dots,N\}$ form an orthonormal basis of $\g_1$. Note that the functions $\mu\mapsto v_{n,m}^\mu$ are just locally defined on neighborhoods of a fixed point $\mu_0\in\g_{2,r}^*$. These neighborhoods yield a covering of $\g_{2,r}^*$. Using this covering, we obtain a family $(U_\ell)_{\ell\in\N}$ disjoint Euclidean open subsets $U_\ell \subseteq\g_{2,r}^*$ whose union is $\g_{2,r}^*$ up to a set of measure zero, and we may define measurable functions $\mu\mapsto v_{n,m}^\mu$ on $\g_{2,r}^*$ that are real analytic on each $U_\ell$.

When decomposing $\R^{d_1}=\R^{r_0}\oplus\R^{2r_1}\oplus \dots\oplus \R^{2r_N}$ and sending the $(n,m)$-th standard basis vector of $\R^{d_1}$ onto $v_{n,m}^\mu$, we obtain a map
\[
R_\mu : \R^{r_0} \oplus \bigoplus_{n=1}^N \R^{2r_n} \to  \bigoplus_{n=0}^N {\ran P_n^\mu}
\]
such that
\[
P_n^\mu R_\mu=R_\mu P_n \quad \text{for all } n\in\{0,\dots,N\},
\]
where $P_n$ denotes the projection from $\R^{d_1}=\R^{r_0}\oplus\R^{2r_1}\oplus \dots\oplus \R^{2r_N}$ onto the $n$-th layer. If we let $\mathbf{b}^\mu=(b^\mu_1,\dots,b^\mu_N)$, we obtain
\begin{equation}\label{eq:transformation}
\big( L^\mu (\phi\circ R_\mu^{-1} )\big) \circ R_\mu
 = \atL{b^\mu}{r}{d_1} \phi
\end{equation}
for any $\phi\in\mathcal S(\R^{d_1})$, say. This can be seen, for instance, via direct computation, or, more conceptually, as follows: If $\delta$ denotes the Dirac measure at the identity element of the Lie group $G$, then, in distributional sense,
\[
Lf=L(f*\delta)=f*(L\delta), \quad f\in \mathcal S(G),
\]
where $*$ denotes the group convolution on $G$. Let $\Delta_{\g_1}$ denote the Euclidean Laplacian on $\g_1$. Since $X_j\delta=\partial_{x_j} \delta$ for each of the vector fields $X_j$, we get
\begin{equation}\label{eq:transformation-ii}
Lf=f*(-\Delta_{\g_1}\delta).
\end{equation}
Given $\phi,\psi\in\mathcal S(\g_1)$, we consider their \textit{$\mu$-twisted convolution} given by
\[
\phi \times_\mu \psi (x) = \int_{\g_1} \phi(x') \psi(x-x') e^{\frac i 2 \omega_\mu(x,x')} \,dx' , \quad x\in\g_1.
\]
Then, for $f,g\in \mathcal S(G)$, we have
\[
(f*g)^\mu = f^\mu \times_\mu g^\mu.
\]
Let $\delta_0$ be the Dirac measure at the origin of $\g_1$. Then \cref{eq:transformation-ii} yields
\[
L^\mu \phi = \phi \times_\mu (-\Delta_{\g_1}\delta_0).
\]
Thus, interpreted in distributional sense, we have
\begin{align*}
L^\mu (\phi \circ R_\mu^{-1} ) (R_\mu  y) 
& = \int_{\g_1} \phi(R_\mu^{-1}x') (-\Delta_{\g_1}\delta)(R_\mu y-x') e^{\frac i 2 \omega_\mu(R_\mu y, x')} \,dx' \\
& = \int_{\R^{d_1}} \phi(y') (-\Delta_{\g_1}\delta)(y-y') e^{\frac i 2\omega_\mu(R_\mu y,R_\mu y')} \,dy',
\end{align*}
which, in view of \cref{eq:symplectic}, gives \cref{eq:transformation}. 

Finally, since the rotation $R_\mu$ intertwines the operators $L^\mu$ and $\smash{\atL{b^\mu}{r}{d_1}}$ via conjugation in \cref{eq:transformation}, it also intertwines their functional calculi, that is,
\[
(F(L^\mu)\phi)\circ R_\mu = F\big(\atL{b^\mu}{r}{d_1}\big)(\phi\circ R_\mu)
\]
for all bounded Borel functions $F:\R\to\C$ and all $\phi\in L^2(\g_1)$, which is \cref{eq:rotation}.
\end{proof}

\begin{remark}
For our later purposes, we actually only need the function $\mu\mapsto R_\mu$ to be measurable without relying on the smoothness properties of (iii). This is due to the facts that the convolution kernels $\mathcal K_{F(L,\textbf{U})}$ of the operators $F(L,\textbf{U})$ are rotational invariant on each of the blocks given by the projections $P_n^\mu$, see \cref{prop:conv-kernel} and \cref{eq:Laguerre}, and that the proof of the $L^p$-$L^2$ restriction type estimate relies in particular on a Plancherel argument.
\end{remark}

\subsection{Spectral properties of anisotropic twisted Laplacians}

\hyphenation{an-iso-trop-ic}
Let again $d_1\in\N\setminus\{0\}$, $N\in\N\setminus\{0\}$, $\mathbf b=(b_1,\dots,b_N) \in [0,\infty)^N$, $\mathbf r=(r_1,\dots,r_N)\in(\N\setminus\{0\})^N$ and $r_0=d_1-2\left|\textbf{r}\right|_1$. Since the anisotropic twisted Laplacian
\[
\atL{b}{r}{d_1} =
(-\Delta_{\R^{r_0}}) \oplus \tL{b_1}{2r_1} \oplus \dots  \oplus \tL{b_N}{2r_N}
\]
acts as a Laplacian on the layer $\R^{r_0}$, we introduce a second partial Fourier transform
\begin{equation}\label{eq:partial-ft-0}
f^{(\tau)}( y) = \int_{\R^{r_0}} f(t, y) e^{- i \langle \tau, t\rangle } \, dt,\quad (\tau, y)\in \R^{r_0}\times\R^{d_1-r_0}.
\end{equation}
If we let $\bar d_1=d_1-r_0=2\left|\textbf{r}\right|_1$, then
\[
\big(\atL{b^\mu}{r}{d_1} f\big)^{(\tau)} = \big(|\tau|^2+\atL{b^\mu}{r}{\bar d_1}\big) f^{(\tau)}.
\]
Arguing similar to the proof of Proposition~1.1 of \cite{Mue90} (by comparing the generators of the corresponding semigroups of $\atL{b^\mu}{r}{d_1}$ and $|\tau|^2+\Delta^{(r,b^\mu)}$ on $\R^{\bar d_1}$), one can verify that the functional calculus of the two operators is compatible with the partial Fourier transform, whence
\begin{equation}\label{eq:Fourier-rad}
\big(F\big(\atL{b^\mu}{r}{d_1}\big) f\big)^{(\tau)} = F\big(|\tau|^2+\atL{b^\mu}{r}{\bar d_1}\big) f^{(\tau)}\quad\text{in } L^2(\R^{\bar d_1})
\end{equation}
for all bounded Borel functions $F:\R\to\C$, all $f\in L^2(\R^{d_1})$ and almost all $\tau\in\R^{d_0}$. Hence, by \cref{prop:rotation} and \cref{eq:Fourier-rad}, spectral properties of the twisted Laplacian $L^\mu$ are those of the operators $|\tau|^2+\atL{b^\mu}{r}{\bar d_1}$ modulo an orthogonal transformation and the partial Fourier transform \cref{eq:partial-ft-0}.

If the parameter $\textbf{b}=(b_1,\dots,b_N)$ satisfies $b_n>0$ for all $n\in\{1,\dots, N\}$, the spectral projections of the anisotropic twisted Laplacian $\atL{b}{r}{\bar d_1}$ can be written down in terms of twisted convolutions with Laguerre functions.

\begin{definition}\label{def:spectral-proj}
Let $\textbf{b}=(b_1,\dots,b_N)\in (0,\infty)^N$, $\textbf{r}=(r_1,\dots,r_N)\in (\N\setminus\{0\})^N$ with $N\in\N\setminus\{0\}$, and $\bar d_1=2r_1+\dots+2r_N$.

For $\phi,\psi\in L^2(\R^{\bar d_1})$, we call the function $\phi \times_{(\mathbf b,\mathbf r)} \psi$ given by
\[
\phi \times_{(\mathbf b,\mathbf r)} \psi(y) =
\int_{\R^{\bar d_1}} \phi(z) \psi(y - z) E^{\mathbf b,\mathbf r}(y,z )\,dz, \quad y\in \R^{\bar d_1}
\]
the \textit{$(\mathbf b,\mathbf r)$-twisted convolution} of $\phi$ and $\psi$, where $E^{\mathbf b,\mathbf r}$ is given by
\[
E^{\mathbf b,\mathbf r}(y, z)
= \prod_{n=1}^N \exp\big(\tfrac i 2 \,b_n\, \omega_{\R^{2r_n}}\big(y^{(n)},z^{(n)}\big)\big),
\]
with $y=(y^{(1)},\dots,y^{(N)}),z=(z^{(1)},\dots,z^{(N)})\in \R^{2r_1}\times\dots\times\R^{2r_N}$.

We define the \textit{$(\mathbf{b},\mathbf{r})$-rescaled Laguerre functions} $\Lag{k}{b}{r}$ via
\[ 
\Lag{k}{b}{r} = \varphi_{k_1}^{(b_1,r_1)} \otimes \dots \otimes \varphi_{k_N}^{(b_N,r_N)},\quad \textbf{k}=(k_1,\dots,k_N)\in\N^N,
\]
where $\varphi_k^{(\lambda,m)}$ denotes the $\lambda$-rescaled Laguerre function given by
\begin{equation}\label{eq:Laguerre}
\varphi_k^{(\lambda,m)}(z) = \lambda^m L_k^{m-1}\big(\tfrac 1 2 \lambda |z|^2\big) \,e^{-\frac 1 4 \lambda |z|^2},\quad z\in\R^{2m},
\end{equation}
and $L_k^{m-1}$ is the $k$-th Laguerre polynomial of type $m-1$.
\end{definition}

\begin{proposition}\label{prop:spectral}
If $\mathbf{b}\in (0,\infty)^N$, the spectrum of $\atL{b}{r}{\bar d_1}$ on $L^2(\R^{\bar d_1})$ consists of the eigenvalues
\[
\eigv{k}{b}{r} = \sum_{n=1}^N \left(2 k_n +r_n\right)b_n,\quad \mathbf{k}=(k_1,\dots,k_N)\in \N^N,
\]
and the associated orthogonal projections $\proj{k}{b}{r}$ given by
\[
\proj{k}{b}{r} f =  f \times_{(\mathbf b,\mathbf r)} \Lag{k}{b}{r},\quad f\in L^2(\R^{\bar d_1})
\]
decompose $L^2(\R^{\bar d_1})$ into subspaces of eigenspaces of $\atL{b}{r}{\bar d_1}$.
\end{proposition}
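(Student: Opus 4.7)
The plan is to decouple the proposition into two independent steps: a reduction to the single-block case $N=1$ via the tensor-product structure, and the classical spectral theory of the rescaled twisted Laplacian on $\R^{2m}$.

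First I would unpack the direct-sum structure. Under the canonical isomorphism $L^2(\R^{\bar d_1}) \cong L^2(\R^{2r_1}) \otimes \cdots \otimes L^2(\R^{2r_N})$ induced by the splitting $\R^{\bar d_1} = \R^{2r_1} \oplus \cdots \oplus \R^{2r_N}$, the operator $\atL{b}{r}{\bar d_1}$ becomes $\sum_{n=1}^{N} \id \otimes \cdots \otimes \tL{b_n}{2r_n} \otimes \cdots \otimes \id$. The spectral theorem for commuting self-adjoint operators then immediately produces eigenvalues equal to sums of eigenvalues of the individual $\tL{b_n}{2r_n}$, with joint eigenprojections equal to the corresponding tensor products. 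On the side of the desired formula, since $E^{\mathbf b,\mathbf r}(y,z)$ is defined as a product over blocks, the $(\mathbf b,\mathbf r)$-twisted convolution factors accordingly as the tensor product of single-block $b_n$-twisted convolutions on each $\R^{2r_n}$; and $\Lag{k}{b}{r}$ is already defined as the tensor product $\varphi_{k_1}^{(b_1,r_1)} \otimes \cdots \otimes \varphi_{k_N}^{(b_N,r_N)}$. So both sides factor compatibly over the blocks, and it suffices to prove the proposition in the case $N=1$.

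For the single-block statement I would invoke the classical spectral decomposition of the rescaled twisted Laplacian: for each $\lambda > 0$ and $m \in \N\setminus\{0\}$, the operator $\tL{\lambda}{2m}$ is self-adjoint on $L^2(\R^{2m})$ with pure point spectrum $\{(2k+m)\lambda : k\in \N\}$, and the orthogonal projection onto the $(2k+m)\lambda$-eigenspace acts on $f$ as $b$-twisted convolution with $\varphi_k^{(\lambda,m)}$ from \eqref{eq:Laguerre}. For $\lambda = 1$ this is the standard decomposition of the special Hermite operator, which can be established either by computing the heat semigroup via the Mehler formula and identifying the Laguerre generating function in the resulting expansion, or by passing to the Schrödinger representation of the Heisenberg group and diagonalizing in the Hermite basis. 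To pass from $\lambda=1$ to general $\lambda>0$, I would conjugate by the unitary dilation $D_\lambda \phi(z) = \lambda^{m/2} \phi(\lambda^{1/2} z)$; a direct computation on the three terms of $\tL{\lambda}{2m}$ yields $D_\lambda^{-1} \tL{\lambda}{2m} D_\lambda = \lambda \, \tL{1}{2m}$, so the spectrum is rescaled by $\lambda$ and the eigenprojections transform by $D_\lambda$, and the Jacobian factor $\lambda^m$ arising from the change of variables on both sides of the twisted convolution is precisely the prefactor built into the definition of $\varphi_k^{(\lambda,m)}$.

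Combining the two steps gives the claim in full generality: the spectrum of $\atL{b}{r}{\bar d_1}$ is exactly $\{\eigv{k}{b}{r} = \sum_{n=1}^N (2k_n + r_n) b_n : \mathbf k \in \N^N\}$, and the corresponding projection $\proj{k}{b}{r}$ is realized as $(\mathbf b,\mathbf r)$-twisted convolution against $\Lag{k}{b}{r}$. The only real obstacle in the above plan is the bookkeeping of normalizing constants; one must verify that the various factors of $b_n^{r_n}$ coming from the Jacobians in each block combine to turn the convolution operator into a genuine orthogonal projection rather than merely a scalar multiple of one, which is exactly why the definition of $\varphi_k^{(\lambda,m)}$ contains the prefactor $\lambda^m$.
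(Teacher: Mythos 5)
Your proposal is correct and follows essentially the same route as the paper's proof: both reduce the anisotropic operator to the single-block case via the tensor-product/direct-sum structure, and both derive the single-block projection formula from the classical spectral decomposition of the special Hermite operator at scale one, passing to general $\lambda$ via the dilation relation $\varphi_k^{(\lambda,m)}(z)=\lambda^m\varphi_k^{(1,m)}(\lambda^{1/2}z)$ (which is what the paper's use of $\Phi_{\nu,\nu'}^\lambda(z)=\lambda^{m/2}\Phi_{\nu,\nu'}^1(\lambda^{1/2}z)$ amounts to). Your conjugation identity $D_\lambda^{-1}\tL{\lambda}{2m}D_\lambda=\lambda\tL{1}{2m}$ and the accounting of the Jacobian factor $\lambda^m$ both check out, and your "joint eigenprojection" phrasing correctly captures that $\proj{k}{b}{r}$ may project onto a proper subspace of an eigenspace of $\atL{b}{r}{\bar d_1}$ when distinct $\mathbf{k}$ yield the same total eigenvalue.
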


\begin{remark}
In general, $\proj{k}{b}{r}$ is only a projection onto a subspace of an eigenspace as two eigenvalues $\smash{\eigv{k}{b}{r}}$ and $\smash{\eigv{k'}{b}{r}}$ might coincide for $\mathbf{k}\neq \mathbf{k'}$. The projection onto the corresponding eigenspace is given by
\[
f\mapsto \sum_{\mathbf{k'}\in\N^N:\eigv{k}{b}{r}=\eigv{k'}{b}{r}} \proj{k'}{b}{r} f.
\]

\end{remark}

\begin{proof}
We briefly recall the spectral properties of twisted Laplacians. We refer the reader to \cite{Th93} for further details. For $\lambda>0$, the Schrödinger representation $\pi_\lambda$ of the Heisenberg group $\mathbb H_m = \C^m \times \R$ on $L^2(\R^m)$ is given by
\[
\pi_\lambda(a, b, t) \varphi(\xi) = e^{i \lambda t} e^{i \lambda(a \xi+\frac{1}{2} ab)} \varphi(\xi+b),
\]
where $a,b,\xi\in\R^m,t\in\R$ and $\varphi\in L^2(\R^m)$. For $\nu\in\N^m$, let $\Phi_\nu^{\lambda}$ be the $\lambda$-rescaled Hermite function given by
\[
\Phi_\nu^\lambda(\xi) = \lambda^{m/4} \prod_{j=1}^m h_{\nu_j}(\lambda^{1/2} \xi_j),\quad \xi\in \R^m,
\]
where $h_{\nu_j}$ shall denote the $\nu_j$-th Hermite function on $\R$ (as, for instance, defined in \cite[Equation (1.1.18)]{Th93}). It is well know \cite[pp.\ 16]{Th93} that the matrix coefficients $\Phi_{\alpha,\beta}$, $\alpha,\beta\in\N^m$ given by
\begin{equation}\label{eq:special-hermite}
\Phi_{\alpha,\beta}^\lambda(z) = (2\pi)^{-m/2} \lambda^{m/2} \big(\pi_\lambda(z,0) \Phi_\alpha^\lambda, \Phi_{\beta}^\lambda\big),\quad z\in\R^{2m}
\end{equation}
form a complete orthonormal system of eigenfunctions of the $\lambda$-twisted Laplacian $\tL{\lambda}{2m}$, with 
\begin{equation}\label{eq:eigenfunction-1}
\Delta_{\R^{2m}}^{(\lambda)} \Phi_{\alpha,\beta}^\lambda = \left(2\left|\beta\right|_1+m\right) \lambda \,\Phi_{\alpha,\beta}^\lambda.
\end{equation}
Thus, for $k\in\N$, the orthogonal projection associated with the eigenvalue $\left(2k+m\right)\lambda$ is given by
\[
P_k^{(\lambda,m)} f = \sum_{\alpha\in \N^m} \sum_{|\beta|_1 =k}  \left(f,\Phi_{\alpha,\beta}^\lambda\right)_{L^2(\R^{2m})}\Phi_{\alpha,\beta}^\lambda.
\]
The projection $P_k^{(\lambda,m)}$ can be written in a more explicit form via the $\lambda$-twisted convolution given by
\[
f\times_{(\lambda,m)} g (z) = \int_{\R^{2m}} f(w)g(z-w) e^{\frac i 2 \lambda\, \omega_{\R^{2m}}(z,w)}\,dw,\quad z\in\R^{2m},
\]
where $\omega_{\R^{2m}}$ shall again denote the standard symplectic form of \cref{eq:standard-symplectic-1}. Since 
\[
\Phi_{\nu,\nu'}^\lambda(z) = \lambda^{m/2} \, \Phi^1_{\nu,\nu'}(\lambda^{1/2}z),
\]
the identities (1.3.41) and (1.3.42) of \cite[pp.\ 21]{Th93} imply
\begin{equation}\label{eq:eigenfunction-4}
\varphi_k^{(\lambda,m)}(z) = (2\pi)^{m/2} \lambda^{m/2} \sum_{|\nu|_1=k} \Phi^\lambda_{\nu,\nu}(z).
\end{equation}
Hence, by (2.1.5) of \cite[p.\ 30]{Th93}, $P_k^{(\lambda,m)}$ may be rewritten as
\begin{equation}\label{eq:proj-compact}
P_k^{(\lambda,m)} f = f \times_{(\lambda,m)} \varphi_k^{(\lambda,m)}.
\end{equation}
Writing down the eigenfunctions of the anisotropic twisted Laplacian $\atL{b}{r}{\bar d_1}$ is now immediate, as the functions
\[
\eigf{\nu}{\nu'}{b}{r} = \Phi_{\nu^{(1)},(\nu')^{(1)}}^{(b_1)}\otimes\dots\otimes \Phi_{\nu^{(N)},(\nu')^{(N)}}^{(b_N)}
\]
with $\nu=(\nu^{(1)},\dots,\nu^{(N)}),\nu'=\big((\nu')^{(1)},\dots,(\nu')^{(N)}\big)\in \N^{r_1}\times \dots\times\N^{r_N} =\N^{\bar d_1/2}$ form a complete orthonormal system of $\R^{\bar d_1}$, with
\[
\atL{b}{r}{d_1} \, \eigf{\nu}{\nu'}{b}{r}
= \bigg( \sum_{n=1}^N \big(2\, |(\nu')^{(n)}|_1+r_n\big)b_n \bigg) \eigf{\nu}{\nu'}{b}{r}.
\]
Hence $L^2(\R^{\bar d_1})$ decomposes into eigenspaces of $\atL{b}{r}{\bar d_1}$. Using \cref{eq:proj-compact} on every block shows that the projections defined by
\[
\proj{k}{b}{r} f =  f \times_{(\mathbf b,\mathbf r)} \Lag{k}{b}{r}
\]
satisfy
\[
\proj{k}{b}{r} f = \sum_{\nu\in \N^{\bar d_1/2}} \sum_{\nu'\in A_{\textbf{k}}} \big(f,\eigf{\nu}{\nu'}{b}{r}\big)_{L^2(\R^{d_1})}\eigf{\nu}{\nu'}{b}{r},
\]
where $A_{\textbf{k}}$ is the set of all $\nu'\in \N^{\bar d_1/2}$ such that $|(\nu')^{(n)}|_1 =k_n$ for all $1\le n\le N$. Hence $\proj{k}{b}{r}$ projects onto a subspace of the eigenspace of $\atL{b}{r}{\bar d_1}$ that is associated with the eigenvalue
\[
\eigv{k}{b}{r} = \sum_{n=1}^N \left(2 k_n +r_n\right)b_n,\quad \textbf{k}\in \N^N,
\]
which finishes the proof.
\end{proof}

\subsection{Convolution kernels}

Recall that the sub-Laplacian
\[
L=-(X_1^2+\dots+X_{d_1}^2)
\]
and the vector $\textbf{U}=(-iU_1,\dots,-iU_{d_2})$ of differential operators, where $U_1,\dots,U_{d_2}$ is the chosen basis of the second layer of the stratification $\g=\g_1\oplus \g_2$, admit a joint functional calculus which is compatible with the partial Fourier transform along $\g_2^*$ by \cref{prop:joint-calculus}. For suitable functions $F:\R\times \R^{d_2}\to\C$, the operator $F(L,\mathbf U)$ possesses a convolution kernel $\mathcal K_{F(L,\mathbf U)}$, that is,
\[
F(L,\mathbf U) f = f * \mathcal K_{F(L,\mathbf U)} \quad \text{for all } f\in \S(G).
\]
As in \cite[Corollary 8]{MaMue13}, we show that the convolution kernel $\mathcal K_{F(L,\mathbf U)}$ can be explicitly written down in terms of the Fourier transform and rescaled Laguerre functions. To that end, recall that \cref{prop:rotation} yields the spectral decomposition
\[
-J_\mu^2 = \sum_{n=1}^N (b_n^\mu)^2 P_n^\mu
\]
for all $\mu$ in the Zariski open subset $\g_{2,r}^*\subseteq\g_2^*$ of \cref{prop:rotation}. We use the notation of \cref{prop:rotation} in the next proposition.

\begin{proposition}\label{prop:conv-kernel}
If $F:\R\times \R^{d_2}\to\C$ is a Schwartz function, then $F(L,\mathbf U)$ possesses a convolution kernel $\mathcal K_{F(L,\mathbf U)}\in \S(G)$. For $x\in\g_1$ and $u\in\g_2^*$, we have
\begin{align}\label{eq:conv-kernel}
\mathcal K_{F(L,\mathbf U)}(x,u) = & (2\pi)^{-r_0-d_2} \int_{\g_{2,r}^*}   \int_{\R^{r_0}} \sum_{\mathbf{k}\in\N^N} F(|\tau|^2+\eigvp{k}{\mu},\mu) \notag \\
& \times \bigg[\prod_{n=1}^N \varphi_{k_n}^{(b_n^\mu,r_n)}(R_\mu^{-1} P_n^\mu x)\bigg] e^{i\langle \tau, R_\mu^{-1} P_0^\mu x\rangle} \, e^{i\langle \mu, u\rangle} \, d\tau  \, d\mu,
\end{align}
where $P_0^\mu = \id_{\g_1} - \left(P_1^\mu+\dots+P_N^\mu\right)$ and
\[
\eigvp{k}{\mu} = \eigv{k}{b^\mu}{r} = \sum_{n=1}^N \left(2k_n+r_n\right)b_n^\mu.
\]
\end{proposition}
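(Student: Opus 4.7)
The plan is to read off the formula by pulling $F(L,\mathbf U)$ through the partial Fourier transform in $u\in\g_2$, the rotation $R_\mu$ of \cref{prop:rotation}, and the partial Fourier transform in the $\R^{r_0}$-variable from \eqref{eq:Fourier-rad}. That $\mathcal K_{F(L,\mathbf U)}\in\S(G)$ for Schwartz $F$ is a standard consequence of Hulanicki's theorem applied to the joint calculus of the commuting self-adjoint left-invariant operators $L,-iU_1,\dots,-iU_{d_2}$ (e.g., by writing $F(L,\mathbf U)$ as a superposition of heat semigroup operators for the positive Rockland-type operator $L-(U_1^2+\dots+U_{d_2}^2)$), so the only real task is to identify the kernel.

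Taking partial Fourier transform in $u$, the convolution identity $(f*g)^\mu=f^\mu\times_\mu g^\mu$ together with \cref{prop:joint-calculus} shows that $\mathcal K^\mu$ is exactly the $\mu$-twisted convolution kernel of $F(L^\mu,\mu)$ on $L^2(\g_1)$. To compute it, I transport by the rotation $R_\mu$: using \eqref{eq:symplectic-0} together with the fact that $\ran P_0^\mu=\ker J_\mu$ is the radical of $\omega_\mu$, one obtains the identity $\omega_\mu(R_\mu y,R_\mu y')=\sum_{n=1}^N b_n^\mu\,\omega_{\R^{2r_n}}(P_n y,P_n y')$, from which the $\mu$-twisted convolution becomes, after the orthogonal change of variable, a mixed convolution on $\R^{r_0}\oplus\R^{2r_1}\oplus\dots\oplus\R^{2r_N}$ which is Euclidean on the first block and $(\mathbf b^\mu,\mathbf r)$-twisted on the others. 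Combined with \eqref{eq:rotation}, this identifies $y\mapsto\mathcal K^\mu(R_\mu y)$ as the convolution kernel of $F(\atL{b^\mu}{r}{d_1})$ for this mixed convolution.

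Next, invoking \eqref{eq:Fourier-rad} and passing to the Fourier side in the $\R^{r_0}$-variable reduces the task to computing the $(\mathbf b^\mu,\mathbf r)$-twisted convolution kernel of $F(|\tau|^2+\atL{b^\mu}{r}{\bar d_1})$ for each $\tau\in\R^{r_0}$. This is supplied directly by \cref{prop:spectral} and \cref{def:spectral-proj}: the operator equals $\sum_{\mathbf k} F(|\tau|^2+\eigv{k}{b^\mu}{r})\,\proj{k}{b^\mu}{r}$, and each $\proj{k}{b^\mu}{r}$ is right $(\mathbf b^\mu,\mathbf r)$-twisted convolution with $\Lag{k}{b^\mu}{r}=\varphi_{k_1}^{(b_1^\mu,r_1)}\otimes\dots\otimes\varphi_{k_N}^{(b_N^\mu,r_N)}$. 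Inverting the $\tau$- and $\mu$-Fourier transforms, and using the intertwining $P_n R_\mu^{-1}=R_\mu^{-1}P_n^\mu$ from \cref{prop:rotation} to rewrite each Laguerre factor in terms of $R_\mu^{-1}P_n^\mu x$ and the oscillatory factor as $e^{i\langle\tau,R_\mu^{-1}P_0^\mu x\rangle}$, reproduces exactly \eqref{eq:conv-kernel}.

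The main obstacle is bookkeeping: correctly tracking the identification of $\ran P_n^\mu\subseteq\g_1$ with $\R^{2r_n}$ provided by $R_\mu$ when lifting Laguerre factors back to the $\g_1$-variable, and checking that the merely measurable (though piecewise real analytic) regularity of $\mu\mapsto R_\mu$ from (iii) of \cref{prop:rotation} is no obstruction, which it is not since the final formula involves $R_\mu$ only through rotation-invariant quantities on each range $\ran P_n^\mu$. Convergence of the $\mathbf k$-series and of the integrals in $\tau,\mu$ is then immediate from the rapid decay of $F$ and standard uniform bounds on rescaled Laguerre functions, which at the same time upgrade the kernel identity to Schwartz regularity on $G$.
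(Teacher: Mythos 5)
Your proposal is correct and follows essentially the same path as the paper's proof: partial Fourier transform along $\g_2$ via \cref{prop:joint-calculus}, conjugation by $R_\mu$ via \eqref{eq:rotation}, partial Fourier transform in the $\R^{r_0}$-variable via \eqref{eq:Fourier-rad}, and the spectral decomposition of \cref{prop:spectral} expressing the projections as twisted convolutions with rescaled Laguerre functions, followed by the observation that the block-diagonal twist factor reassembles to $e^{\frac{i}{2}\omega_\mu(x,x')}$ under the intertwining relations $P_n^\mu R_\mu = R_\mu P_n$. Your closing remark about the formula depending on $R_\mu$ only through rotation-invariant quantities on each $\ran P_n^\mu$ is also made by the author (in the remark immediately after \cref{prop:rotation}), so the overall argument matches in both structure and emphasis.
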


\begin{proof}
Since $F$ is a Schwartz function, we have $\mathcal K_{F(L,\mathbf U)}\in \S(G)$ due to a result of Hulanicki \cite{Hu84} (see also \cite[Proposition 4.2.1]{Ma10}). The formula for $\mathcal K_{F(L,\mathbf U)}$ can be proved as in \cite[Proposition 4]{Ma15} by using the Fourier inversion formula of the group Fourier transform on $G$ and computing the matrix coefficients of the Schrödinger representations, but can also be derived as follows. Recall from \cref{prop:rotation} that
\[
P_n^\mu R_\mu=R_\mu P_n\quad \text{for all }\mu \in \g_{2,r}^* \text{ and } n\in\{0,\dots,N\},
\]
where $P_n$ denotes the projection from $\R^{d_1}=\R^{r_0}\oplus\R^{2r_1}\oplus \dots\oplus \R^{2r_N}$ onto the $n$-th layer. Let $\bar P = \mathrm{id}_{\R^{d_1}} - P_0$ and $\bar P^\mu=\id_{\g_1} -P_0^\mu$. Then
\[
\bar P^\mu R_\mu=R_\mu \bar P \quad \text{for all }\mu \in \g_{2,r}^*.
\]
Using the Fourier inversion formula, \cref{prop:joint-calculus,prop:rotation}, and \cref{eq:Fourier-rad}, we obtain
\begin{align}
& (2\pi)^{r_0+d_2} F(L,\textbf{U}) f (x,u)
 = (2\pi)^{r_0} \int_{\g_2^*} F(L^\mu,\mu) f^\mu (x) e^{i\langle\mu,u\rangle} \,d\mu \notag \\
& = (2\pi)^{r_0} \int_{\g_{2,r}^*} \big( F( \atL{b^\mu}{r}{d_1} ,\mu) (f^\mu \circ R_\mu)\big) (R_\mu^{-1}x) e^{i\langle\mu,u\rangle} \,d\mu \notag\\
& =  \int_{\g_{2,r}^*} \int_{\R^{r_0}} \big( F( |\tau|^2 +  \atL{b^\mu}{r}{\bar d_1} ,\mu) f^{(\tau,\mu)} \big)(\bar P R_\mu^{-1}x) \, e^{i\langle\tau,P_0 R_\mu^{-1}x \rangle}e^{i\langle\mu,u\rangle} \,d\tau \,d\mu,\notag \\
& = \int_{\g_{2,r}^*} \int_{\R^{r_0}}  \sum_{\mathbf{k}\in\N^N}  F( |\tau|^2 + \eigvp{k}{\mu} ,\mu) \big(\proj{k}{b^\mu}{r} f^{(\tau,\mu)}\big)  (\bar P R_\mu^{-1}x)\notag \\
& \hspace*{6cm} \times e^{i\langle\tau,P_0 R_\mu^{-1} x\rangle}e^{i\langle\mu,u\rangle} \,d\tau \,d\mu,\label{eq:conv-kernel-a}
\end{align}
where we put
\[
f^{(\tau,\mu)} = (f^\mu \circ R_\mu)^{(\tau)}.
\]
By \cref{prop:spectral}, we have
\begin{align}
\big(\proj{k}{b^\mu}{r} f^{(\tau,\mu)}\big)(y) = \int_{\R^{\bar d_1}} f^{(\tau,\mu)}(z)\, \Lag{k}{b^\mu}{r}(y - z)\, E^{\mathbf b,\mathbf r}(y,z )\,dz, \label{eq:conv-kernel-b}
\end{align}
where $E^{\mathbf b,\mathbf r}$ is given by
\[
E^{\mathbf b,\mathbf r}(y, z)
= \prod_{n=1}^N \exp\big(\tfrac i 2 \,b_n\, \omega_{\R^{2r_n}}\big(y^{(n)},z^{(n)}\big)\big),
\]
with $y=(y^{(1)},\dots,y^{(N)}),z=(z^{(1)},\dots,z^{(N)})\in \R^{2r_1}\times\dots\times\R^{2r_N}$.

Unboxing the definition of $f^{(\tau,\mu)}$ yields
\begin{equation}\label{eq:conv-kernel-c}
f^{(\tau,\mu)}(z) 
 = \int_{\R^{r_0}} \int_{\g_2} f\big(R_\mu(t,z),u'\big)\, e^{-i\langle \tau,t\rangle} e^{-i\langle \mu,u'\rangle}\,du' dt.
\end{equation}
Note that
\begin{align*}
f * \mathcal K_{F(L,\mathbf U)} (x,u)
& = \int_G f(x',u') \mathcal K_{F(L,\mathbf U)}\big((x',u')^{-1}(x,u)\big)\,d(x,u) \\
& = \int_G f(x',u') \mathcal K_{F(L,\mathbf U)}\big(x-x',u-u'+\tfrac 1 2 [x,x']\big)\,d(x,u).
\end{align*}
Inserting \cref{eq:conv-kernel-b} and \cref{eq:conv-kernel-c} into the last line of \cref{eq:conv-kernel-a} and rearranging the order of integration yields the expression
\begin{align*}
 \int_{\R^{\bar d_1}} \int_{\R^{r_0}} \int_{\g_2} \int_{\g_{2,r}^*} \int_{\R^{r_0}} \sum_{\mathbf{k}\in\N^N}  & f\big(R_\mu(t,z),u'\big) \,F( |\tau|^2 + \eigvp{k}{\mu} ,\mu) \,   \,\\
& \times \Lag{k}{b^\mu}{r}(\bar P R_\mu^{-1}x - z)\, E^{\mathbf b,\mathbf r}(\bar P R_\mu^{-1}x,z )\\
& \times e^{i\langle\tau,P_0 R_\mu^{-1} x-t\rangle}e^{i\langle\mu,u-u'\rangle} \,d\tau \,d\mu\,du' dt\,dz.
\end{align*}
Recall that \cref{eq:symplectic-0} asserts
\[
\omega_\mu(P_n^\mu x,P_n^\mu x')=b_n^\mu \, \omega_{\R^{2r_n}}(P_n R_\mu^{-1}x,P_n R_\mu^{-1}x').
\]
Since $\omega_\mu(x,x')=\langle J_\mu x,x'\rangle$ and
\[
J_\mu(\ran P_n^\mu)\subseteq \ran P_n^\mu
\]
by \cref{prop:rotation}, we have
\[
\omega_\mu(P_n^\mu x,P_{n'}^\mu x')=0 \quad\text{for } n\neq n'.
\]
This yields
\begin{align*}
E^{\mathbf b,\mathbf r}(\bar P R_\mu^{-1}x,\bar P R_\mu^{-1}x')
& = \exp\big( \tfrac i 2 \omega_\mu(\bar P^\mu x, \bar P^\mu x') \big) \\
& = \exp\big( \tfrac i 2 \omega_\mu(x,x') \big).
\end{align*}
Hence, substituting $x' = R_\mu(t,z)$ in the formula above, that is,
\[
t = P_0 R_\mu^{-1}x'\quad\text{ and } \quad z = \bar P R_\mu^{-1} x',
\]
and using $\bar P^\mu R_\mu=R_\mu \bar P$ and $P_0^\mu R_\mu=R_\mu P_0$, we obtain \cref{eq:conv-kernel}.
\end{proof}

\section{Spectral cluster estimates} \label{sec:cluster}

Throughout this section, we fix $d_1\in\N\setminus\{0\}$ and $\mathbf r=(r_1,\dots,r_N)\in(\N\setminus\{0\})^N$ with $N\in\N\setminus\{0\}$ and consider the anisotropic twisted Laplacians
\[
\atL{b}{r}{d_1} =
(-\Delta_{\R^{r_0}}) \oplus \tL{b_1}{2r_1} \oplus \dots  \oplus \tL{b_N}{2r_N}
\]
for parameters $\textbf{b}=(b_1,\dots,b_N)$ ranging in a compact subset $\textbf{B}$ of $[0,\infty)^N$. In the following, we deviate slightly from the notation of the previous section (where we generally denoted coordinates on $\g_1$ by $x$ and coordinates on $\R^{d_1}$ under the orthogonal transformation $R_\mu:\R^{d_1}\to\g_1$ of \cref{prop:rotation} by $y$) and will denote the coordinates on $\R^{d_1}$ again by $x$ in this section.

Given $n\in\N\setminus\{0\}$, we again write $p_n=2(n+1)/(n+3)$ for the Stein--Tomas threshold. The main result of this section are the following spectral cluster estimates, which are inspired by \cite[Chapter 5]{So93}.

\begin{theorem}[Spectral cluster estimates]\label{thm:cluster}
Let $\mathbf{B}\subseteq [0,\infty)^N$ be a compact subset. If $1\le p \le p_{d_1}$, then
\begin{equation}\label{eq:cluster}
\norm{\mathbf{1}_{[K,K+1)}(\atL{b}{r}{d_1}) }_{p\to 2} \le C_{\mathbf B,\mathbf r,p} \left(K+1\right)^{\frac{d_1}2(\frac 1 p - \frac 1 2)-\frac 1 2}
\end{equation}
for all $K\in \N$ and $\mathbf b\in \mathbf B$.
\end{theorem}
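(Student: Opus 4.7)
My plan is to prove the cluster estimate by a positivity reduction followed by a Tomas-style dyadic analysis of an explicit kernel. Fix a nonnegative $\chi\in\mathcal S(\R)$ with $\chi\ge\mathbf 1_{[0,1]}$ pointwise, so that $0\le T:=\mathbf 1_{[K,K+1)}(\atL{b}{r}{d_1})\le \chi(\atL{b}{r}{d_1}-K)$ as self-adjoint operators on $L^2(\R^{d_1})$. Since $T$ is an orthogonal projection,
\[
\|Tf\|_2^2=\langle Tf,f\rangle\le\langle \chi(\atL{b}{r}{d_1}-K)f,f\rangle\le\|\chi(\atL{b}{r}{d_1}-K)\|_{p\to p'}\|f\|_p^2
\]
by H\"older's inequality, reducing matters to the uniform bound $\|\chi(\atL{b}{r}{d_1}-K)\|_{p\to p'}\le C_{\mathbf B,\mathbf r,p}(K+1)^{d_1(1/p-1/2)-1}$. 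Writing $\R^{d_1}=\R^{r_0}\oplus\R^{\bar d_1}$ with $\bar d_1=2|\mathbf r|_1$ and taking a partial Fourier transform in $t\in\R^{r_0}$ reduces the operator on each $\tau$-fibre to $\chi(|\tau|^2+\atL{b}{r}{\bar d_1}-K)$ on $L^2(\R^{\bar d_1})$. This fibrewise operator is a $(\mathbf b,\mathbf r)$-twisted convolution by the Laguerre expansion of \cref{prop:spectral}, and reassembling the pieces shows that
\[
\chi(\atL{b}{r}{d_1}-K)f(t,z)=\int_{\R^{d_1}}f(s,w)\,H_K(t-s,z-w)\,E^{\mathbf b,\mathbf r}(z,w)\,ds\,dw
\]
with $|E^{\mathbf b,\mathbf r}|\equiv 1$ and
\[
H_K(u,v)=(2\pi)^{-r_0}\sum_{\mathbf k\in\N^N}\Lag{k}{b}{r}(v)\int_{\R^{r_0}}\chi(|\tau|^2+\eigv{k}{b}{r}-K)\,e^{i\langle\tau,u\rangle}\,d\tau.
\]
Since the phase factor has unit modulus, the operator is dominated pointwise by ordinary $\R^{d_1}$-convolution with $|H_K|$.

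The core step is a Stein--Tomas-type bound on this convolution. A direct Young inequality is insufficient at the endpoint $p=p_{d_1}$, so I would follow Tomas's dyadic approach: split $H_K=\sum_{j\ge 0}H_{K,j}$ at spatial scales $|(u,v)|\sim 2^j(K+1)^{-1/2}$, and control each convolution with $H_{K,j}$ by Riesz--Thorin interpolation between its $L^1\to L^\infty$ norm $\|H_{K,j}\|_\infty$ and its $L^2\to L^2$ norm (controlled via $\|\chi(\atL{b}{r}{d_1}-K)\|_{2\to 2}\le\|\chi\|_\infty$ together with almost-orthogonality across dyadic scales). The decisive input is the dispersive pointwise bound
\[
|H_K(u,v)|\lesssim (K+1)^{(d_1-2)/2}(1+\sqrt{K+1}\,|(u,v)|)^{-(d_1-1)/2},
\]
to be established by combining stationary phase on the $\tau$-integral (which is a smoothly weighted Fourier transform of an annulus in $\R^{r_0}$ of radius $(K-\eigv{k}{b}{r})^{1/2}$ and thickness $O((K-\eigv{k}{b}{r})^{-1/2})$), Plancherel--Rotach-type asymptotics for the Laguerre functions $\Lag{k}{b}{r}$, and a Weyl-type count of $\mathbf k$ satisfying $\eigv{k}{b}{r}\in[K-O(1),K+O(1)]$. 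As a consistency check, the Dirichlet-type integral
\[
\int_{\sum x_n\le K,\,x_n\ge 0}\prod_{n=1}^{N}x_n^{r_n-1}\Bigl(K-\textstyle\sum_n x_n\Bigr)^{(r_0-2)/2}dx\asymp K^{|\mathbf r|_1+r_0/2-1}=K^{(d_1-2)/2}
\]
predicts $H_K(0,0)\asymp (K+1)^{(d_1-2)/2}$, matching the dispersive bound at the origin.

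The hardest part will be the dispersive estimate, specifically its uniformity across the compact parameter set $\mathbf B$. In the interior $\mathbf b\in(0,\infty)^N$ standard Hermite/Laguerre asymptotics combined with the Dirichlet count above suffice, but at boundary points where some $b_n$ vanishes, the corresponding block of $\atL{b}{r}{d_1}$ degenerates from a twisted into a Euclidean Laplacian and the Laguerre expansion is no longer available for that block. Handling such degenerations will require splitting the $\mathbf k$-sum according to which $b_n$ are close to zero, reorganising the kernel so that the degenerating twisted directions are absorbed into the Euclidean factor, and verifying that the effective restriction dimension of the relevant spherical shells remains $d_1$ as $\mathbf b$ approaches $\partial\mathbf B$.
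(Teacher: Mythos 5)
You take a genuinely different route from the paper. The paper proves \cref{thm:cluster} by interpolating between a trivial $p=1$ estimate and the endpoint $p=p_{d_1}$, and at the endpoint it invokes the Koch--Tataru dispersive machinery for principally normal pseudodifferential operators: the symbol $\sigma^{\mathbf b}$ of $\atL{\gamma^2\kappa^2\mathbf b}{r}{d_1}-\gamma^2\kappa^4$ has characteristic sets which are round spheres with $d_1-1$ curvatures bounded uniformly in $\mathbf b\in\mathbf B$, and Theorem 2.5(i) of \cite{KoTa05a} then gives the local $L^2\to L^{p_{d_1}'}$ bound directly, sidestepping any explicit kernel asymptotics. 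Your opening positivity reduction (dominating $\mathbf 1_{[K,K+1)}$ by a Schwartz majorant $\chi$ and using $\|Tf\|_2^2=\langle Tf,f\rangle$ to pass to a $p\to p'$ bound) is the same device that appears in the paper's first, non-endpoint proof (\cref{prop:cluster-v1}); that step is correct, and so is your dimensional heuristic via the Dirichlet integral. What you propose to do next is new relative to the paper: a Tomas-type dyadic decomposition of the explicit twisted-convolution kernel $H_K$ together with the pointwise dispersive bound $|H_K(u,v)|\lesssim (K+1)^{(d_1-2)/2}(1+\sqrt{K+1}\,|(u,v)|)^{-(d_1-1)/2}$.

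This is where the genuine gap lies. The dispersive bound is stated, not proved, and it is by far the heaviest step. You would need Plancherel--Rotach asymptotics for the products of rescaled Laguerre functions $\Lag{k}{b}{r}$, with constants uniform over the whole ladder $\eigv{k}{b}{r}\in[K-O(1),K+O(1)]$, combined with stationary phase on the radial $\tau$-integral and a lattice count of admissible $\mathbf k$; each ingredient individually is nontrivial, and the composite estimate at the caustic $|(u,v)|\sim\sqrt{K}$ (where the Laguerre functions exhibit Airy transitions rather than oscillatory decay) is not visibly covered by the bound you state. More seriously, you yourself flag that the argument degenerates as some $b_n\to 0$ along $\partial\mathbf B$, where the Laguerre expansion collapses and the corresponding block becomes Euclidean; your proposal to re-sort the $\mathbf k$-sum and absorb small-$b_n$ directions into the Euclidean factor is a reasonable programme, but it is only a programme. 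The paper's Koch--Tataru route handles precisely this degeneration for free, since the characteristic variety $\Sigma_x$ remains a sphere of radius $\gamma\lambda$ (hence with curvature bounds independent of $\mathbf b$) even at $b_n=0$. Finally, the $L^2$ control of your dyadic pieces is asserted via ``almost-orthogonality across dyadic scales,'' but for a twisted convolution the usual Fourier-support argument behind the Tomas scheme does not transfer verbatim: one cannot bound the piece $f\mapsto f\times_{(\mathbf b,\mathbf r)}H_{K,j}$ on $L^2$ by inspecting $\widehat{H_{K,j}}$, and the fact that $|E^{\mathbf b,\mathbf r}|\equiv 1$ only yields the $L^1\to L^\infty$ comparison, not $L^2\to L^2$. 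You would need to run the $L^2$ estimate in the Weyl or Bargmann calculus and show that truncating the kernel at scale $2^j(K+1)^{-1/2}$ only fattens the spectral support by the reciprocal scale; this is plausible but is an additional non-trivial step that is absent from the proposal.
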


We give two proofs of these estimates. The first one relies on a Mehler type formula and subordination by the heat semigroup associated with the anisotropic twisted Laplacian, but only works for the smaller range $1\le p <p_{d_1-1}$.

\subsection{First proof via subordination by the heat semigroup}

\begin{proposition}\label{prop:cluster-v1}
The cluster estimate \eqref{eq:cluster} holds true for $1\le p <p_{d_1-1}$.
\end{proposition}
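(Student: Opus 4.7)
The plan is to derive the cluster bound \cref{eq:cluster} by combining an explicit Gaussian heat kernel estimate for $\atL{b}{r}{d_1}$ with a Chen--Ouhabaz-type restriction theorem that converts such bounds into Stein--Tomas cluster estimates.

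First I would exploit the fact that $\atL{b}{r}{d_1}$ decomposes as an orthogonal direct sum of commuting operators acting on independent Euclidean factors, so the heat semigroup $e^{-t\atL{b}{r}{d_1}}$ factors as the tensor product of the standard heat semigroup on $\R^{r_0}$ and the semigroups $e^{-t\tL{b_n}{2r_n}}$ for $n=1,\dots,N$. The classical Mehler formula gives each twisted factor an explicit kernel of the shape $c_{r_n}\bigl(b_n/(2\sinh(b_n t))\bigr)^{r_n}$ times $\exp(-\tfrac{b_n}{4}\coth(b_n t)\,|z-z'|^2)$ times a unimodular phase. Taking tensor products and using the elementary estimates $x/\sinh x\le 1$ and $x\coth x\ge 1$, together with the compactness of $\mathbf B$, I obtain a Gaussian heat kernel bound
\[
|K_t^{\mathbf b,\mathbf r}(x,x')| \le C_{\mathbf B,\mathbf r}\, t^{-d_1/2}\, e^{-c|x-x'|^2/t}, \qquad 0<t\le 1,
\]
uniform in $\mathbf b\in\mathbf B$, with on-diagonal dimension $d_1$.

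With the Gaussian bound in hand, I would then invoke (a suitable version of) the restriction-type criterion of Chen--Ouhabaz \cite{ChOu16}: for any self-adjoint positive operator whose heat semigroup enjoys such a Gaussian upper bound, one has
\[
\|\mathbf{1}_{[K,K+1)}(L)\|_{p\to 2}\lesssim (K+1)^{\frac{d_1}{2}(\frac{1}{p}-\frac{1}{2})-\frac{1}{2}}
\]
in the range $1\le p < p_{d_1-1}$. The proof of that criterion subordinates a smooth approximation of the indicator $\mathbf{1}_{[K,K+1)}$ to the heat semigroup through a Laplace-type transform, and the extra $K^{-1/2}$ gain over the naive estimate obtained by dominating $\mathbf{1}_{[K,K+1)}(L)$ by $e\cdot e^{-L/K}$ is precisely the $1/2$-derivative improvement that a Stein--Tomas-type argument supplies. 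Specializing the conclusion to $D=d_1$ yields \cref{eq:cluster} in the stated range.

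The hard step is the uniformity in $\mathbf b\in\mathbf B$: the Mehler prefactor $b_n/\sinh(b_n t)$ and the exponential weight $b_n\coth(b_n t)$ must stay well-controlled when components $b_n$ are small. Their $b_n\to 0$ limits are $1/t$ and $1/t$ respectively, and the convergence is uniform by compactness of $\mathbf B$, so the Gaussian bound survives uniformly. The intrinsic limitation $p<p_{d_1-1}$ in place of the sharp $p\le p_{d_1}$ of \cref{thm:cluster} is inherent to any approach using only Gaussian heat kernel bounds, as it reflects Stein--Tomas restriction on a sphere of dimension $d_1-1$; closing the gap up to $p_{d_1}$ presumably requires directly exploiting the Euclidean restriction theorem on $\R^{d_1}$ in the second proof.
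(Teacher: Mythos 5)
Your overall strategy (Mehler formula, subordination by the heat semigroup, $TT^*$ reduction) is the same one the paper uses, but there is a genuine gap at the step where you invoke a black-box criterion. A real-time Gaussian heat-kernel upper bound
$|K^{\mathbf b,\mathbf r}_t(x,x')|\lesssim t^{-d_1/2}e^{-c|x-x'|^2/t}$ does \emph{not} by itself yield Stein--Tomas cluster estimates for the spectral projections $\mathbf 1_{[K,K+1)}(\atL{b}{r}{d_1})$. The only thing such a bound gives for free via $\mathbf 1_{[K,K+1)}\le e\cdot e^{-\lambda/(K+1)}$ is the trivial $L^1$--$L^2$ estimate without the extra factor $(K+1)^{-1/2}$; that gain requires dispersion, i.e.\ control of the analytically continued heat semigroup $\exp(-\zeta\atL{b}{r}{d_1})$ for $\zeta$ in a rectangle $0<\Re\zeta\le 1$, $|\Im\zeta|\le\alpha_{\mathbf B,\mathbf r}$ that comes arbitrarily close to the imaginary axis. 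Indeed, in the $TT^*$ argument one writes the smoothed indicator as a Fourier--Laplace transform and is forced to integrate the $L^p\to L^{p'}$ bound for $\exp(-(\alpha/(K+1)-i\xi)\atL{b}{r}{d_1})$ in $\xi$; the convergence of that integral exactly at the exponent $p<p_{d_1-1}$ is what produces the cluster bound. For a general operator with only a real-time Gaussian bound, the Davies-type complex-time estimate gives $\|e^{-\zeta L}\|_{1\to\infty}\lesssim(\Re\zeta)^{-d_1/2}$, not $|\zeta|^{-d_1/2}$, and the argument breaks down near the imaginary axis. In other words, the criterion you are appealing to does not hold at the stated level of generality; what Chen--Ouhabaz (and the paper, following \cite{ChOuSiYa16}, Proposition II.8) actually use is the complex-time dispersive estimate, whose proof here requires the explicit Mehler kernel at complex time.

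Concretely, the missing piece is to observe that the Mehler kernel extends to an analytic family in $\zeta$ on the right half-plane, and that for $\zeta$ with $\Re\zeta\ge 0$ one has $\Re\big(\zeta^{-1}\mathrm T(i\zeta)\big)=\Re\coth\zeta\ge 0$, so the exponential factor stays bounded, while $|\mathrm S(i\zeta b_n)|$ stays bounded on the rectangle as long as $|\Im\zeta|<\pi/b_n$ for all $\mathbf b\in\mathbf B$ (this is where the choice of $\alpha_{\mathbf B,\mathbf r}$ enters). Your elementary inequalities $x/\sinh x\le 1$ and $x\coth x\ge 1$ are real-variable facts and are not enough; you need their complex-argument analogues on the rectangle. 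Your discussion of the uniformity in $\mathbf b$ near $0$ and of why the method tops out at $p<p_{d_1-1}$ is on target, but without the explicit complex-time dispersive bound the proposal does not prove the proposition.
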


The proof of \cref{prop:cluster-v1} is essentially that of Proposition II.8 of \cite{ChOuSiYa16} combined with a Mehler type formula. However, the uniformity of the constants is crucial here, so we give full details.

\begin{proof}
Given $\textbf{b}\in [0,\infty)^N$, the heat semigroup
\[
T^{\textbf{b},\textbf{r}}(t)=\exp(-t\atL{b}{r}{d_1}),\quad t\ge 0
\]
admits an analytic extension $\zeta\mapsto T^{\textbf{b},\textbf{r}}(\zeta)=\exp(-\zeta\atL{b}{r}{d_1})$ to the complex right half-plane. By \cite[Proposition~4]{MaMue16}, it can be written as a twisted convolution with the heat kernel $p^{\mathbf b,\mathbf r}_\zeta$, which is given by
\begin{align}
p^{\mathbf b,\mathbf r}_\zeta (x) = \frac{1}{(4\pi \zeta)^{d_1/2}} & \exp\Big(-\frac{1}{4\zeta} \big| x^{(0)}\big|^2\Big) \notag \\
& \times \prod_{n=1}^N \mathrm S(i\zeta b_n)^{r_n} \exp\Big(-\frac{1}{4\zeta}\, \mathrm T(i\zeta b_n) \big|x^{(n)}\big|^2\Big),\label{eq:heat-kernel}
\end{align}
where $x=(x^{(0)},\dots,x^{(N)}),x'=((x')^{(0)},\dots,(x')^{(N)})\in \R^{r_0}\oplus\R^{2r_1}\oplus \dots\oplus \R^{2r_N}$, and
\[
\mathrm S,\mathrm T: \C \setminus \{  k \pi : 0 \neq k \in \Z \} \to\C
\]
are the meromorphic functions given by
\[
\mathrm S(\zeta) = \frac{\zeta}{\sin \zeta}\quad\text{and}\quad 
\mathrm T(\zeta) = \frac{\zeta}{\tan \zeta}.
\]
Then, if
\[
E^{\mathbf b,\mathbf r}(x,x')
= \prod_{n=1}^N \exp\big(\tfrac i 2 \,b_n\, \omega_{\R^{2r_n}}\big(x^{(n)},(x')^{(n)}\big)\big),
\]
the operator $T^{\textbf{b},\textbf{r}}(\zeta)$ is given by
\begin{equation}\label{eq:heat-twisted}
T^{\textbf{b},\textbf{r}}(\zeta) f (x) = \int_{\R^{d_1}} f(x')p^{\mathbf b,\mathbf r}_\zeta(x-x') E^{(\textbf{b},\textbf{r})}(x,x') \,dx'.
\end{equation}
Note that the parameters $b_n$ are actually allowed to be zero, in which case the twisted convolution is just the Euclidean convolution on the $n$-th block. Note that
\begin{align*}
\frac 1 \zeta \, \mathrm T(i\zeta)
& = \coth\zeta
  = \frac{\sinh(2\Re\zeta)-i\sin(2\Im\zeta)}{2\left|\sinh\zeta\right|^2},
\end{align*}
whence
\[
\Re\Big( \frac 1 \zeta\, \mathrm T(i\zeta)\Big) \ge 0 \quad\text{if } \Re\zeta \ge 0. 
\]
Thus, neglecting all oscillations of the twisted convolution above, \cref{eq:heat-twisted} yields
\begin{equation}\label{eq:heat-dispersive}
\norm{T^{\textbf{b},\textbf{r}}(\zeta) f}_\infty
 \le \norm{f}_1 \norm{p^{\mathbf b,\mathbf r}_\zeta}_\infty
\lesssim_{\textbf{B},\mathbf r} \,|\zeta|^{-d_1/2} \norm{f}_1 
\end{equation}
for all $\zeta\in \C$ lying in the rectangle
\[
R = \{ \zeta \in\C : 0<\Re\zeta\le 1 \text{ and } |\Im \zeta|\le \alpha_{\textbf{B},\textbf{r}} \},
\]
where the constant $\alpha_{\textbf{B},\textbf{r}}>0$ is chosen small enough such that $\alpha_{\textbf{B},\textbf{r}}<\pi / b_n$ for all $\textbf{b}\in\textbf{B}$ and all $n\in\{1,\dots,N\}$. Interpolating between \cref{eq:heat-dispersive} and the trivial $L^2$-$L^2$ estimate, we get
\begin{equation}\label{eq:heat-interpol}
\norm{T^{\textbf{b},\textbf{r}}(\zeta)}_{p\to p'} \lesssim_{\mathbf B,\mathbf r} |\zeta|^{-\frac {d_1} 2(\frac 1 p - \frac 1 {p'})} \quad \text{for all } \zeta\in R \text{ and } 1\le p\le 2.
\end{equation}
Given $K\in\N$, we define
\[
\widehat{F_K}(\xi) = (1-|\xi|)_+^2\, e^{-iK\xi},\quad\xi\in\R.
\]
Then $F_K$ is given by
\[
F_K(\lambda) = \frac{2}{\pi}\, \frac{\lambda-K-\sin(\lambda-K)}{(\lambda-K)^3},\quad \lambda\in\R.
\]
In particular, $F_K:\R\to\C$ is a positive function with $\supp \widehat{F_K}=[-1,1]$. Put
\[
F_K^{\textbf{B},\textbf{r}}(\lambda) = F_K\big(\tfrac 1 2\alpha_{\textbf{B},\textbf{r}}\lambda\big)
\quad\text{and}\quad
G_K^{\textbf{B},\textbf{r}}(\lambda) = F_K^{\textbf{B},\textbf{r}}(\lambda)\, e^{-\tfrac 1 2\alpha_{\textbf{B},\textbf{r}}\lambda/(K+1)}.
\]
Note that there is some constant $C_{\textbf{B},\textbf{r}}>0$ independent of $K$ such that
\[
\inf_{\lambda \in [K,K+1)} G_K^{\textbf{B},\textbf{r}}(\lambda) \ge C_{\textbf{B},\textbf{r}}.
\]
This implies
\[
C_{\textbf{B},\textbf{r}} \,\norm{\mathbf{1}_{[K,K+1)}(\atL{b}{r}{d_1})f}_2 \le \norm{G_K^{\textbf{B},\textbf{r}}(\atL{b}{r}{d_1})f}_2.
\]
Using a $TT^*$ argument, we obtain
\begin{equation}\label{eq:T^*T}
C_{\textbf{B},\textbf{r}}^2\, \big\Vert\mathbf{1}_{[K,K+1)}(\atL{b}{r}{d_1})\big\Vert_{p\to p'} \le \big\Vert(G_K^{\textbf{B},\textbf{r}})^2(\atL{b}{r}{d_1})\big\Vert_{p\to p'}.
\end{equation}
Note that
\[
\supp \widehat{F_K^{\textbf{B},\textbf{r}}} \subseteq [-\tfrac 1 2\alpha_{\textbf{B},\textbf{r}},\tfrac 1 2 \alpha_{\textbf{B},\textbf{r}}].
\]
The Fourier inversion formula yields
\begin{equation}
(G_K^{\textbf{B},\textbf{r}})^2(\lambda) 
 = \frac{1}{(2\pi)^2} \int_{-\alpha_{\textbf{B},\textbf{r}}}^{\alpha_{\textbf{B},\textbf{r}}} \widehat{F_K^{\textbf{B},\textbf{r}}}*\widehat{F_K^{\textbf{B},\textbf{r}}} (\xi)\, e^{-(\alpha_{\textbf{B},\textbf{r}}/(K+1)-i\xi)\lambda} \,d\xi.
\end{equation}
Moreover, note that there is a constant $\tilde C_{\textbf{B},\textbf{r}}>0$ independent of $K$ such that
\begin{equation}\label{eq:conv-K}
\big\| \widehat{F_K^{\textbf{B},\textbf{r}}}*\widehat{F_K^{\textbf{B},\textbf{r}}}\big\|_\infty\le \tilde C_{\textbf{B},\textbf{r}}.
\end{equation}%
Altogether, \cref{eq:T^*T}, \cref{eq:conv-K} and \cref{eq:heat-interpol} yield 
\begin{align*}
& \big\Vert\mathbf{1}_{[K,K+1)}(\atL{b}{r}{d_1})\big\Vert_{p\to p'}
\lesssim_{\mathbf B,\mathbf r} \big\Vert(G_K^{\textbf{B},\textbf{r}})^2(\atL{b}{r}{d_1})\big\Vert_{p\to p'}\\
& \lesssim \int_{-\alpha_{\textbf{B},\textbf{r}}}^{\alpha_{\textbf{B},\textbf{r}}} \big|\widehat{F_K^{\textbf{B},\textbf{r}}}*\widehat{F_K^{\textbf{B},\textbf{r}}}(\xi)\big| \, \big\Vert e^{-(\alpha_{\textbf{B},\textbf{r}}/(K+1)-i\xi) \atL{b}{r}{d_1}}\big\Vert_{p\to p'}\,d\xi\\
& \lesssim_{\mathbf B,\mathbf r} \int_{-\infty}^\infty (\alpha_{\textbf{B},\textbf{r}}^2(K+1)^{-2}+\xi^2)^{-\frac {d_1} 4(\frac 1 p - \frac 1 {p'})}\,d\xi\\
& \sim_{\mathbf B,\mathbf r} (K+1)^{\frac {d_1} 2(\frac 1 p - \frac 1 {p'})-1}.
\end{align*}
Note that the last integral converges since $1\le p<p_{d_1-1}$. Hence
\[
\big\Vert\mathbf{1}_{[K,K+1)}(\atL{b}{r}{d_1})\big\Vert_{p\to 2} \le C_{\textbf{B},\mathbf r} \left(K+1\right)^{\frac {d_1} 2(\frac 1 p - \frac 1 2)-\frac 1 2},
\]
which is \eqref{eq:cluster}.
\end{proof}

\begin{remark}
In fact, formulas for heat kernels on two-step stratified Lie groups of the form \cref{eq:heat-kernel} are well-known, see for instance \cite{Hu76}, \cite[Corollary (5.5)]{Cy79}, \cite[Theorem 5.2]{MueRi03}, and \cite{MaMue16} for further references. Alternatively, \cref{eq:heat-kernel} could also be verified directly from \cref{prop:spectral} by applying the Mehler type formula for Laguerre functions of \cite[p.~37]{Th93} on each block of $\R^{r_0}\oplus\R^{2r_1}\oplus\dots\oplus\R^{2r_N}$. 
\end{remark}

\subsection{Second proof via the dispersive estimates of Koch and Tataru}

The proof of \cref{eq:cluster} for the full range $1\le p\le p_{d_1}$ relies on interpolation between $p=1$ and the endpoint $p=p_{d_1}$. For $p=1$, we just resort to our previous result. Let $\mathbf{B}\subseteq [0,\infty)^N$ be the compact subset of \cref{thm:cluster}.

\begin{lemma}\label{lem:cluster-p=1}
The cluster estimate \eqref{eq:cluster} holds true for $p=1$, i.e.,
\begin{equation}\label{eq:cluster-p=1}
\big\Vert\mathbf{1}_{[K,K+1)}(\atL{b}{r}{d_1})\big\Vert_{1\to 2} \le C_{\mathbf B,\mathbf r} \left(K+1\right)^{\frac {d_1} 4 -\frac 1 2}
\end{equation}
for all $K\in \N$ and $\mathbf b\in \mathbf B$.
\end{lemma}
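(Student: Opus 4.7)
The plan is a direct one-line reduction: invoke \cref{prop:cluster-v1} at the endpoint $p = 1$. That proposition establishes the cluster estimate \eqref{eq:cluster} for all exponents in the range $1 \le p < p_{d_1-1}$, and since $p_{d_1-1} = 2d_1/(d_1+2) > 1$ whenever $d_1 \ge 3$, the value $p = 1$ lies inside the range. Substituting $p = 1$ into the bound $(K+1)^{\frac{d_1}{2}(\frac{1}{p}-\frac{1}{2})-\frac{1}{2}}$ yields exactly $(K+1)^{\frac{d_1}{4}-\frac{1}{2}}$, which is the right-hand side of \eqref{eq:cluster-p=1}. Moreover, the constants in the proof of \cref{prop:cluster-v1} are controlled uniformly in $\mathbf{b}\in\mathbf{B}$ through the parameter $\alpha_{\mathbf{B},\mathbf{r}}$ and the convolution bound on $\widehat{F^{\mathbf{B},\mathbf{r}}_K} \ast \widehat{F^{\mathbf{B},\mathbf{r}}_K}$, which is exactly the kind of uniformity demanded in the conclusion.

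The only delicate point is the borderline case $d_1 = 2$ (the smallest dimension compatible with $N \ge 1$ in the framework of this section), where $p_{d_1 - 1} = 1$ and the integral appearing in the subordination-plus-Plancherel argument of \cref{prop:cluster-v1} becomes $\int_{-\alpha}^{\alpha}(\epsilon^2 + \xi^2)^{-1/2}\,d\xi \sim \log(1/\epsilon)$, producing a logarithmic loss rather than a clean power. However, in this case the target exponent is $d_1/4 - 1/2 = 0$, so it suffices to obtain a uniform $L^1 \to L^2$ bound. This can be achieved by a direct Laguerre-kernel argument based on \cref{prop:spectral}: the convolution kernel of $\mathbf{1}_{[K,K+1)}(\tL{b_1}{2})$ is a finite sum of rescaled Laguerre functions $\varphi_k^{(b_1,1)}$ indexed by those $k$ with $(2k+1)b_1 \in [K,K+1)$, and summing their $L^\infty$ norms (which are of order $b_1$) over the $O(1/b_1 + 1)$ relevant indices gives a bound uniform in $K$ and $b_1 \in \mathbf{B}$. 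Combining this with the elementary identity $\|P\|_{1 \to 2}^2 = \|P\|_{1 \to \infty}$, valid for any self-adjoint projection $P$, concludes this edge case.

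The main obstacle, then, is not conceptual but cosmetic: recognizing that the exponent in \eqref{eq:cluster-p=1} is exactly the one produced by the endpoint of \cref{prop:cluster-v1}, and verifying that the borderline dimension $d_1 = 2$ does not damage the target through the log-loss of the subordination argument. Once this is checked, the proof is immediate.
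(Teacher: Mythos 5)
Your proposal is correct and follows essentially the same route as the paper: for $d_1>2$ one simply substitutes $p=1$ into \cref{prop:cluster-v1}, and the case $d_1=2$ (where $p_{d_1-1}=1$ and the subordination integral picks up a logarithm) is treated separately. The paper handles $d_1=2$ by citing Koch--Ricci \cite{KoRi07} (via Lemma~3.1 of \cite{Ni23}), whereas you unpack the argument directly: since $\|P\|_{1\to 2}^2=\|P\|_{1\to\infty}$ for a self-adjoint projection $P$, it suffices to bound the $L^\infty$-norm of the twisted-convolution kernel of $\mathbf{1}_{[K,K+1)}(\tL{b_1}{2})$, which by \cref{prop:spectral} is a sum of $O(1/b_1+1)$ Laguerre functions $\varphi_k^{(b_1,1)}$, each of $L^\infty$-norm $b_1$; this is precisely the kernel computation behind the cited reference, so the two arguments are the same in substance. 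One small point you leave unaddressed is the degenerate case $b_1=0$, which is allowed since $\mathbf{B}\subseteq[0,\infty)^N$: there the operator $\tL{0}{2}=-\Delta_z$ has continuous spectrum and the Laguerre expansion does not apply. That case is easy --- the kernel of the projector at the diagonal is $(2\pi)^{-2}$ times the Lebesgue measure of the annulus $\{K\le|\xi|^2<K+1\}$, which equals $\pi$ in dimension~$2$, giving a uniform constant --- but it is worth noting explicitly, since it lies outside the discrete-spectrum argument you describe.
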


\begin{proof}
Note that $d_1=r_0+2\left|\textbf{r}\right|_1 \ge 2$ and $N\ge 1$. We distinguish the cases $d_1=2$ and $d_1>2$. If $d_1>2$, then $p_{d_1-1}=2d_1/(d_1+2)>1$, and the statement follows directly from \cref{prop:cluster-v1}. If $d_1=2$, we have $r_0=0$, $N=1$, $\textbf{r}=1$, and $\textbf{b}=b_1$, so the anisotropic twisted Laplacian admits only one block
\[
\atL{b}{r}{d_1} = \tL{b_1}{2}.
\]
By \cref{eq:eigenfunction-1}, the spectrum of $\tL{b_1}{2}$ consists of the eigenvalues $\left(2k+1\right)b_1$, $k\in \N$. Then, similar to Lemma 3.1 in \cite{Ni23}, \cref{eq:cluster-p=1} follows from \cite{KoRi07}.
\end{proof}

For $p=p_{d_1}$, the exponent on the right-hand side of \eqref{eq:cluster} is given by
\begin{align*}
\frac{d_1}{2} \Big( \frac{1}{p_{d_1}} - \frac{1}{2} \Big) - \frac{1}{2}
& = \frac{d_1}{2} \Big( \frac{d_1+3}{2(d_1+1)} - \frac{1}{2} \Big) - \frac{1}{2} \\
& = - \frac{1}{2(d_1+1)}.
\end{align*}
Hence, via interpolation with $p=1$ from \cref{lem:cluster-p=1}, it suffices to prove
\begin{equation}\label{eq:endpoint}
\big\Vert\mathbf{1}_{[K,K+1)}(\atL{b}{r}{d_1})\big\Vert_{p_{d_1}\to 2} \le C_{\mathbf B,\mathbf r} \left(K+1\right)^{-1/(2(d_1+1))}.
\end{equation}

Similar to \cite[Lemma~2]{KoRi07}, we first reduce \cref{eq:endpoint} to some localized form. Let $B_1,B_2\subseteq\R^{d_1}$ denote the open balls of radius 1 and 2 centered at the origin. 

\begin{lemma}\label{lem:reduction}
There is a constant $\gamma=\gamma_{\mathbf B,\mathbf r}>0$ such that
\begin{equation}\label{eq:rescaling-3}
\kappa^{2/(d_1+1)} \| u|_{B_1} \|_{p_{d_1}'} \le C_{\mathbf B,\mathbf r}\, \kappa \, \norm*{ u|_{B_2}}_2 + \kappa^{-1} \norm*{ f|_{B_2}}_2
\end{equation}
for all $\mathbf{b}\in\mathbf{B}$, all $\kappa\ge 1$ and all $u\in L^2(\R^{d_1})$ and $f\in L^2(\R^{d_1})$ such that
\begin{equation}\label{eq:PDE-reduced}
\big(\Delta_{\R^{d_1}}^{\gamma^2\kappa^2 \mathbf{b},\mathbf{r}} - \gamma^2 \kappa^4 \big)u = f.
\end{equation}
\end{lemma}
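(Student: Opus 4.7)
The proof is a dilation argument that reduces \eqref{eq:rescaling-3} to a local Helmholtz-type bound at frequency $\kappa$ for the unscaled anisotropic twisted Laplacian $\atL{b}{r}{d_1}$. Define $\tilde u(y) := u(y/(\gamma\kappa))$ and $\tilde f(y) := f(y/(\gamma\kappa))$ for $y \in \R^{d_1}$. A block-by-block chain-rule computation, based on $\tL{\lambda}{2m} = -\Delta_z + \lambda^2|z|^2/4 - i\lambda\,\omega(z,\nabla_z)$ and the bilinearity of $\omega$, yields the scaling identity
\[
\bigl(\Delta^{\gamma^2\kappa^2\mathbf{b},\mathbf{r}}_{\R^{d_1}} u\bigr)(x) = \gamma^2\kappa^2\,\bigl(\atL{b}{r}{d_1}\tilde u\bigr)(\gamma\kappa\, x),
\]
so that the hypothesis \eqref{eq:PDE-reduced} becomes $\bigl(\atL{b}{r}{d_1} - \kappa^2\bigr)\tilde u = (\gamma\kappa)^{-2}\tilde f$. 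Under this dilation the balls $B_1,B_2$ correspond to $B_{\gamma\kappa},B_{2\gamma\kappa}$, and the Jacobian change of variables gives $\|u\|_{L^q(B_R)} = (\gamma\kappa)^{-d_1/q}\|\tilde u\|_{L^q(B_{\gamma\kappa R})}$ for any $q\in[1,\infty]$. Substituting into \eqref{eq:rescaling-3} and simplifying via $d_1(1/2 - 1/p_{d_1}') = d_1/(d_1+1)$, the claim reduces to the local Helmholtz-type bound
\[
\kappa^{1/(d_1+1)}\,\|\tilde u\|_{L^{p_{d_1}'}(B_{\gamma\kappa})} \le C\,\gamma^{-d_1/(d_1+1)}\,\|\tilde u\|_{L^2(B_{2\gamma\kappa})} + C\,\gamma^{(d_1+2)/(d_1+1)}\,\bigl\|\bigl(\atL{b}{r}{d_1} - \kappa^2\bigr)\tilde u\bigr\|_{L^2(B_{2\gamma\kappa})}.
\]

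To prove this reduced estimate, I would invoke the dispersive estimates of Koch and Tataru for the wave/Schr\"odinger propagator of $\atL{b}{r}{d_1}$. The principal symbol of $\atL{b}{r}{d_1}$ is $|\xi|^2$, independent of $\mathbf{b}$, so the characteristic variety of $\atL{b}{r}{d_1} - \kappa^2$ is the round sphere $\{|\xi|=\kappa\}$ of non-vanishing Gaussian curvature, and $p_{d_1}$ is precisely the Stein--Tomas endpoint for $\R^{d_1}$. Concretely, one picks a smooth cutoff $\chi \in C_c^\infty(B_{2\gamma\kappa})$ equal to one on $B_{\gamma\kappa}$ with $|\nabla^j\chi|\lesssim(\gamma\kappa)^{-j}$, applies a Koch--Tataru-type $L^2\to L^{p_{d_1}'}$ bound to $\chi\tilde u$ using
\[
(\atL{b}{r}{d_1} - \kappa^2)(\chi\tilde u) = (\gamma\kappa)^{-2}\chi\tilde f + [\atL{b}{r}{d_1},\chi]\tilde u,
\]
and absorbs the commutator contribution by a Caccioppoli-type bound controlling $\|\nabla\tilde u\|_{L^2(B_{2\gamma\kappa})}$ in terms of $\kappa\|\tilde u\|_{L^2(B_{2\gamma\kappa})} + \kappa^{-1}\|(\atL{b}{r}{d_1} - \kappa^2)\tilde u\|_{L^2(B_{2\gamma\kappa})}$.

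I expect the main obstacle to be the uniformity of the constants in $\mathbf{b}\in\mathbf{B}$, since the first-order symplectic drift $ib_n\,\omega(z,\nabla_z)$ in each twisted block depends on $\mathbf{b}$ and since $\mathbf{B}\subseteq[0,\infty)^N$ is allowed to touch the boundary where some $b_n$ vanishes and the twisted block degenerates to a free Laplacian. The role of $\gamma = \gamma_{\mathbf{B},\mathbf{r}}$ is to fix a small absolute scale on which, after rescaling, the lower-order terms on $B_{2\gamma\kappa}$ remain subordinate to the principal symbol $|\xi|^2\sim\kappa^2$ uniformly in $\mathbf{b}$: schematically, the subprincipal contribution on $B_{2\gamma\kappa}$ is of order $(\gamma\kappa)\cdot(\gamma\kappa)\cdot\max\mathbf{B}\sim\gamma^2\kappa^2$, which is negligible compared to $\kappa^2$ once $\gamma$ is small enough, depending only on $\mathbf{B}$ and $\mathbf{r}$. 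With this choice of $\gamma$, the Koch--Tataru parametrix is valid with constants depending only on $\mathbf{B}$ and $\mathbf{r}$, and \eqref{eq:rescaling-3} follows.
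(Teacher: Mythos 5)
Your overall strategy — interpret the anisotropic twisted Laplacian as a second-order pseudodifferential operator whose characteristic variety at frequency $\kappa$ is a curved hypersurface, then invoke the Koch--Tataru dispersive machinery and choose $\gamma$ small to tame the $\mathbf{b}$-dependent drift — is the one the paper follows, and your role for $\gamma$ matches the paper's. But there are two genuine problems.

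First, the dilation step goes in the wrong direction. \cref{lem:reduction} is already formulated at the normalization that \cref{thm:Koch-Tataru} wants: the symbol $\sigma^{\mathbf{b}}(x,\xi)=|\xi+\tfrac12\gamma^2\kappa^2J^{\mathbf{b}}x|^2-\gamma^2\kappa^4$ lives on $\{|x|<2,\ |\xi|<\lambda\}$ with $\lambda=\kappa^2$, and the paper verifies $\sigma^{\mathbf{b}}\in\lambda^2 S^2_\lambda$ and the two Koch--Tataru hypotheses directly in these coordinates. Rescaling back to $\tilde u(y)=u(y/(\gamma\kappa))$ recreates the formulation \eqref{eq:localization-2} that the paper's preceding reduction (\textit{Proof of \cref{eq:endpoint}}) had just converted \emph{away from}; you would then have to redo the exact same rescaling to bring it into the form of \cref{thm:Koch-Tataru}, so this buys nothing. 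Also, the claim that the characteristic variety of $\atL{b}{r}{d_1}-\kappa^2$ is the \emph{centered} sphere $\{|\xi|=\kappa\}$ is not correct: the Weyl symbol is $|\xi+\tfrac12 J^{\mathbf{b}}x|^2$, so $\Sigma_x$ is a sphere centered at $-\tfrac12 J^{\mathbf{b}}x$; the drift term is not negligible on the characteristic set, it shifts it. What remains true — and what the paper uses — is that shifted spheres have the same second fundamental form, so condition (ii) holds uniformly.

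Second, and more importantly, you have conflated a physical-space cutoff with the phase-space cutoff that \cref{thm:Koch-Tataru} actually delivers. The conclusion \eqref{eq:dispersive} bounds $\|\chi^w u\|_q$ for a pseudodifferential cutoff $\chi^w$ compactly supported in the ball $B^\lambda$ of phase space; it does \emph{not} bound $\|\chi\tilde u\|_q$ for a physical cutoff $\chi$. To pass from $\|\chi^w(\psi u)\|_{p_{d_1}'}$ to $\|\psi u\|_{p_{d_1}'}$ one must also control the high-frequency part $(1-\chi^w)(\psi u)$, where $\sigma^{\mathbf{b}}$ is elliptic. The paper devotes the whole of part (2) of the proof to this (Sobolev embedding at order $s=d_1/(d_1+1)$, a parametrix $(\tau^{\mathbf{b}})^w$ with $\|(\tau^{\mathbf{b}})^w\|_{2\to 2}\lesssim\lambda^{-2}$, Calderón--Vaillancourt for the error symbols). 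Your proposal never addresses this region, so the argument as written does not give the full $L^{p_{d_1}'}$ bound. Finally, your Caccioppoli route for absorbing the commutator $[\atL{b}{r}{d_1},\chi]\tilde u$ is plausible (and different from the paper's pseudodifferential handling of $\langle\nabla u,\nabla\psi\rangle$ via symbolic calculus), but it would need to be carried out with the anisotropic gradient $\nabla+\tfrac i2 J^{\mathbf{b}}x$ and checked for uniformity near $\partial\mathbf{B}$; as stated it is a placeholder, not a proof.
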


Before we prove \cref{lem:reduction}, we show how the endpoint cluster estimate \cref{eq:endpoint} (and thus \cref{thm:cluster}) can be derived from it. We will apply \cref{lem:reduction} with $\kappa=(K+1)^{1/2}$. The parameter $\gamma=\gamma_{\textbf{B},\textbf{r}}>0$ exists only for technical reasons and will be chosen later to be sufficiently small, depending on the set $\textbf{B}$.

\begin{proof}[Proof of \cref{eq:endpoint}]
Given $K\in\N$, let $\cluster{K}{b}{r}$ denote the projection given by
\[
\cluster{K}{b}{r}=\mathbf{1}_{[K,K+1)}(\atL{b}{r}{d_1}).
\]
We reduce \cref{eq:endpoint} to \cref{eq:rescaling-3}. Let $\kappa=(K+1)^{1/2}$. Suppose we have the estimate
\begin{equation}\label{eq:reduction-PDE-1}
\kappa^{1/(d_1+1)} \norm{u}_{p_{d_1}'} \lesssim_{\mathbf B,\mathbf r} \norm{u}_2 + \norm{f}_2
\end{equation}
for all $\textbf{b}\in\textbf{B}$ and all $u\in L^2(\R^{d_1})$ and $f\in L^2(\R^{d_1})$ such that
\begin{equation}\label{eq:reduction-PDE-2}
(\atL{b}{r}{d_1} - \kappa^2) u = f.
\end{equation}
Then, given $u_0\in L^2(\R^{d_1})$, we can put
\[
u:=\cluster{K}{b}{r} u_0\quad\text{and} \quad f:=(\atL{b}{r}{d_1} - \kappa^2) \cluster{K}{b}{r} u_0,
\]
and \eqref{eq:reduction-PDE-1} would yield
\begin{align*}
\kappa^{1/(d_1+1)} \norm{\cluster{K}{b}{r} u_0}_{p_{d_1}'}
& \lesssim_{\mathbf B,\mathbf r} \norm{\cluster{K}{b}{r} u_0}_2 + \norm{(\atL{b}{r}{d_1} - \kappa^2) \cluster{K}{b}{r} u_0}_2 \\
& \le 2 \left\|u_0\right\|_2,
\end{align*}
which implies \eqref{eq:endpoint} by duality. So it remains to reduce \eqref{eq:reduction-PDE-1} to \cref{eq:rescaling-3}.

Let $\tilde \kappa=\gamma\kappa$. We cover the space with Euclidean balls $B_{\tilde\kappa}(x_j)$ with centers $x_j\in \R^{d_1}$ such that the dilated balls $B_{2\tilde\kappa}(x_j)$ have only bounded overlap. Then, in place of \cref{eq:reduction-PDE-1}, it suffices to show
\begin{equation}\label{eq:localization-1}
\kappa^{1/(d_1+1)} \norm{u|_{B_{\tilde\kappa}(x_j)}}_{p_{d_1}'} \lesssim_{\mathbf B,\mathbf r} \norm{u|_{B_{2\tilde\kappa}(x_j)}}_2 + \norm{f|_{B_{2\tilde\kappa(x_j)}}}_2.
\end{equation}
Indeed, using \eqref{eq:localization-1} in combination with Minkowski's inequality and the bounded overlap of the balls $B_{2\tilde\kappa}(x_j)$, we get
\begin{align*}
\kappa^{1/(d_1+1)} \norm*{u}_{p_{d_1}'}
 & \le \kappa^{1/(d_1+1)} \bigg( \sum_j \big\Vert u|_{B_{\tilde\kappa}(x_j)}\big\Vert_{p_{d_1}'}^{p_{d_1}'} \bigg)^{1/p_{d_1}'}\\
 & \lesssim_{\mathbf B,\mathbf r} \Bigg( \sum_j \left(\big\Vert u|_{B_{2\tilde\kappa}(x_j)} \big\Vert_2 + \big\Vert f|_{B_{2\tilde\kappa(x_j)}}\big\Vert_2 \right)^{p_{d_1}'} \Bigg)^{1/p_{d_1}'}\\
 &\lesssim_{\mathbf B,\mathbf r} \norm*{u}_2 + \norm*{f}_2.
\end{align*}
For $v\in L^2(\R^{d_2})$, we consider the twisted convolution given by
\[
v \times_{(\mathbf b,\mathbf r)} u (x) =
\int_{\R^{d_1}} v(x') u(x - x') E^{\mathbf b,\mathbf r}(x,x')\,dz, \quad x'\in \R^{\bar d_1}
\]
where $E^{\mathbf b,\mathbf r}$ is given by
\[
E^{\mathbf b,\mathbf r}(x, x')
= \prod_{n=1}^N \exp\big(\tfrac i 2 \,b_n\, \omega_{\R^{2r_n}}\big(y^{(n)},z^{(n)}\big)\big),
\]
with $x=(x^{(0)},\dots,x^{(N)}),x'=((x')^{(0)},\dots,(x')^{(N)})\in \R^{r_0}\oplus\R^{2r_1}\oplus \dots\oplus \R^{2r_N}$, and where $\omega_{\R^{2r_n}}$ is the standard symplectic form of \cref{eq:standard-symplectic-1}. Then
\[
\atL{b}{r}{d_1} (v \times_{(\textbf{b},\textbf{r})} u ) = v \times_{(\textbf{b},\textbf{r})} (\atL{b}{r}{d_1} u)\quad \text{for all } v\in L^2(\R^{d_1}).
\]
This implies
\[
\atL{b}{r}{d_1} \big(u(x-x') E^{(\textbf{b},\textbf{r})}(x,x')\big) = \big(\atL{b}{r}{d_1} u\big)(x-x') E^{(\textbf{b},\textbf{r})}(x,x').
\]
As a consequence, substituting
\[
u(x)=\tilde u(x-x') E^{(\textbf{b},\textbf{r})}(x,x')
\quad\text{and}\quad
f(x)=\tilde f(x-x') E^{(\textbf{b},\textbf{r})}(x,x')
\]
shows that we may assume without loss of generality $x_j=0$. Hence, in place of \cref{eq:localization-1}, it suffices to show
\begin{equation}\label{eq:localization-2}
\kappa^{1/(d_1+1)} \norm*{u|_{B_{\tilde\kappa}}}_{p_{d_1}'} \lesssim_{\mathbf B,\mathbf r} \norm*{u|_{B_{2\tilde\kappa}}}_2 + \norm*{f|_{B_{2\tilde\kappa}}}_2
\end{equation}
for all $u$ satisfying \eqref{eq:reduction-PDE-2} (where we just write $B_\kappa$ and $B_{2\kappa}$ for the centered balls $B_\kappa(0)$ and $B_{2\kappa}(0)$). On the other hand, rescaling with $\tilde \kappa=\gamma\kappa$ shows that \eqref{eq:localization-2} is equivalent to
\begin{align}
\kappa^{1/(d_1+1)+d_1/p_{d_1}'} \norm*{ u|_{B_{\tilde\kappa}}(\tilde\kappa\,\cdot\,) }_{p_{d_1}'}
 \lesssim_{\mathbf B,\mathbf r} \ & \kappa^{d_1/2} \left\|u|_{B_{2\tilde\kappa}}(\tilde\kappa\,\cdot\,)\right\|_2 \notag  \\ & + \kappa^{d_1/2} \left\| f|_{B_{2\tilde\kappa}}(\tilde\kappa\,\cdot\,)\right\|_2.\label{eq:rescaling-1}
\end{align}
Since $\Delta_{\R^{d_1}}^{\tilde\kappa^2 \textbf{b},\textbf{r}}\big(u(\tilde\kappa\, \cdot\,)\big) = \tilde\kappa^2 (\atL{b}{r}{d_1}u)(\tilde\kappa \,\cdot\,)$, \eqref{eq:reduction-PDE-2} is equivalent to
\begin{equation}\label{eq:rescaling-2}
\Delta_{\R^{d_1}}^{\tilde \kappa^2 \textbf{b},\textbf{r}} \big(u(\tilde\kappa\, \cdot\,) \big) - \gamma^2\kappa^4 u(\tilde\kappa\, \cdot\,)
 = \gamma^2\kappa^2 f(\tilde\kappa \,\cdot\,).
\end{equation}
Moreover,
\begin{align*}
\frac{1}{d_1+1}+\frac{d_1}{p_{d_1}'} -\frac{d_1}{2}
& = \frac{1}{d_1+1}+d_1 \frac{d_1-1}{2(d_1+1)} -\frac{d_1}{2} \\
& = \frac{2}{d_1+1}-1.
\end{align*}
Substituting $u(\tilde\kappa\, \cdot\,)$ and $\tilde \kappa^2 f(\tilde\kappa\, \cdot\,)$ by new functions $\tilde u$ and $\tilde f$ shows that  \cref{eq:rescaling-1} and \cref{eq:rescaling-2}  are equivalent to \cref{eq:rescaling-3} and \cref{eq:PDE-reduced}, which finishes the proof.
\end{proof}

Similar to \cite[Lemma 2]{KoRi07} and \cite[Lemma 3.4]{KoTa05}, the local estimate of \cref{lem:reduction} is a consequence of the results in \cite{KoTa05a} by Koch and Tataru. To state their result, we need to introduce some notation. Given $\lambda>0$ and $j\in\N$, let $S_\lambda^j$ be the class of all symbols $a\in C^\infty(\R^n\times\R^n, \C)$ satisfying
\begin{equation}\label{eq:symbol-class}
|\partial_x^\alpha\partial_\xi^\beta a(x,\xi)|
\le C_{\alpha,\beta}
\begin{cases}
\lambda^{-|\beta|}& \text{for } |\alpha|\le j,\\
\lambda^{\frac{|\alpha|-j}{2}-|\beta|}& \text{for } |\alpha|> j.
\end{cases}
\end{equation}
For $\lambda\ge 1$, let $\lambda^k S_\lambda^j = \{\lambda^k a:a\in S_\lambda^j\}$. (Note that \cite{KoTa05a} requires $\lambda>1$, but this restriction is insignificant, as we will see in the sketch of the proof of \cref{thm:Koch-Tataru} below.) Given a symbol $a\in S_\lambda^j$, we denote by $a^w=a^w(x,D)$ the pseudo-differential operator defined by the Weyl calculus, that is,
\[
a^w(x,D) u(x) = (2\pi)^{-n} \int_{\R^n} \int_{\R^n} a\big(\tfrac 1 2 (x+y),\xi\big) e^{i\langle x-y, \xi\rangle } u(y) \, dy \, d\xi.
\]
We denote the characteristic set of a given symbol $p\in C^\infty(\R^n\times\R^n, \C)$ by
\[
\chr p=\{(x,\xi)\in \R^n\times(\R^n\setminus\{0\}):p(x,\xi)=0 \},
\]
and set $\Sigma={\chr p} \cap B^\lambda$, where $B^\lambda$ denotes the ball
\[
B^\lambda = \{(x,\xi)\in \R^n\times \R^n: |x|<1 \text{ and } |\xi|<\lambda \}.
\]
Then, for $|x|<1$, the $x$-section of $\Sigma$ is given by
\[
\Sigma_x = \{ \xi \in \R^n\setminus\{0\} :  |\xi|<\lambda \text{ and }  p(x,\xi) = 0 \}.
\]
We will use the following special case of Theorem 2.5~(i) from \cite{KoTa05a}:

\begin{theorem}\label{thm:Koch-Tataru}
Let $k\in \{0,\dots,n-2\}$ and
\begin{equation}\label{eq:exponents}
q = \frac{2(n+1-k)}{n-1-k} \quad\text{and}\quad \rho(q) = \frac{n-1+k}{2(n+1-k)}.
\end{equation}
Suppose that $p\in \lambda S_\lambda^2$ is a real symbol satisfying the following conditions:
\begin{enumerate}
\item[(i)] The symbol $p$ is of principal type, that is,
\[
|\nabla_\xi \, p(x,\xi)| \gtrsim 1 \quad \text{for all } (x,\xi)\in\Sigma.
\]
\item[(ii)] The set $\Sigma_x$ has $n-1-k$ non-vanishing curvatures for all $|x|<1$. More precisely, for all $|x|<1$ and $\xi\in \Sigma_x$, the second fundamental form of $\Sigma_x$ at $\xi$ admits an $(n-1-k)$-minor $M$ such that
\[
|{\det M}| \gtrsim \lambda^{k-n+1},
\]
where the constant is independent of $x$ and $\xi$.
\end{enumerate}
Let $\chi\in S_\lambda^0$ be compactly supported in $B^\lambda$. Then
\begin{equation}\label{eq:dispersive}
\lambda^{-\rho(q)}\norm{\chi^w u}_q \lesssim \norm{p^w u}_2 + \norm{u}_2. 
\end{equation}
\end{theorem}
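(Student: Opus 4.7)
The estimate \cref{eq:dispersive} is a Strichartz-type bound with loss $\lambda^{-\rho(q)}$ for a principal-type pseudodifferential operator whose characteristic variety has $m:=n-1-k$ non-vanishing curvatures, and the exponent $q=2(m+2)/m$ is precisely the dual Stein--Tomas exponent for an $m$-dimensional hypersurface in $\R^{m+1}$ with full curvature. This suggests the overall strategy: reduce $p^w$ microlocally to a model half-wave operator whose propagator inherits the curvature, and derive the estimate from a stationary phase computation combined with a $TT^*$/Stein--Tomas argument.

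The first step is a microlocal partition of unity on $B^\lambda$ separating the elliptic from the characteristic regime. Away from $\Sigma$ one has $|p(x,\xi)|\gtrsim\lambda$ on $\supp\chi$, so the symbolic calculus in $\lambda S_\lambda^2$ yields $\|\chi^w u\|_2 \lesssim \lambda^{-1}\|p^w u\|_2 + \lambda^{-N}\|u\|_2$ for every $N$, which combined with Sobolev embedding into $L^q$ is much stronger than \cref{eq:dispersive}. On a small conic neighborhood of a fixed $(x_0,\xi_0)\in\Sigma$, the principal-type hypothesis lets me invoke a Darboux/Egorov theorem adapted to the semiclassical scale $\lambda^{-1/2}$ of $S_\lambda^2$: one constructs an order-zero Fourier integral operator $T$ quantizing a homogeneous symplectomorphism such that $T^{-1}p^w T = \lambda^{-1}(D_{x_1} - R^w(x,D'))$ modulo lower-order errors, where the first-order symbol $R$ has characteristic set $\{\xi_1 = R(x,\xi')\}$ inheriting the $m$ non-vanishing curvatures in the transverse $(x',\xi')$-directions.

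This reduces \cref{eq:dispersive} to a Strichartz estimate for the half-wave equation $(D_{x_1}-R^w)v = g$, and, by Duhamel's formula, to bounding the propagator $\|U(x_1)v_0\|_{L^q_{x_1}L^q_{x'}} \lesssim \lambda^{\rho(q)}\|v_0\|_2$. Representing $U(x_1)v_0$ as an oscillatory integral via Hörmander's parametrix construction, the hypothesis of $m$ non-vanishing curvatures amounts to $m$-fold non-degeneracy of the mixed Hessian of the phase, so stationary phase yields the dispersive bound
\[
|U(x_1,x',y')| \lesssim \lambda^{(n-1+k)/2}\bigl\langle\lambda|x_1|\bigr\rangle^{-m/2}
\]
on frequency-localized data. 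A $TT^*$ argument in $x'$ combined with Hardy--Littlewood--Sobolev in $x_1$ then produces the sharp Stein--Tomas endpoint, which conjugates back through $T$ to \cref{eq:dispersive}; the $\|u\|_2$ term on the right-hand side absorbs the lower-order remainders coming from the FIO calculus.

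The main obstacle is the normal-form reduction. The class $\lambda S_\lambda^2$ is genuinely anisotropic semiclassical at the scale $\lambda^{-1/2}$ in $x$ and $\lambda^{-1}$ in $\xi$, so the standard FIO calculus has to be replaced by a wave-packet or FBI calculus at precisely this scale in order to track the conjugation $T^{-1}p^w T$ with remainders that are genuinely of lower order than $\lambda^{\rho(q)}$, and to ensure that the curvature hypothesis on $\Sigma_x$ is preserved under the symplectic change of coordinates. Once this semiclassical normal form is in place, the dispersive estimate for the model propagator and the Stein--Tomas $TT^*$ interpolation follow along standard lines.
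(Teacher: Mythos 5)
Your proposal captures the right overall architecture — reduce the principal-type operator to a first-order evolution in a distinguished ``time'' direction, establish an $L^1\to L^\infty$ dispersive bound for the propagator via a wave-packet/FBI representation at the anisotropic scale $(\lambda^{-1/2}$ in $x$, $\lambda^{-1}$ in $\xi)$, and conclude by $TT^*$ — and this is indeed the skeleton of Koch--Tataru's argument that the paper walks through. However, the step you yourself flag as the ``main obstacle'' is where your route genuinely diverges, and it is a gap. You propose a Darboux/Egorov normal form: conjugate $p^w$ by an FIO $T$ quantizing a symplectomorphism to reach $\lambda^{-1}(D_{x_1}-R^w)$. The class $\lambda S^2_\lambda$ is \emph{not} covariant under a general homogeneous symplectomorphism — the anisotropic scaling that puts $x$ and $\xi$ at different semiclassical scales is precisely what breaks the symmetry that Egorov's theorem relies on — and the curvature condition on $\Sigma_x$ would have to be tracked through the push-forward of the characteristic variety, which is far from automatic. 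The paper (following \cite{KoTa05a}, Lemma 3.8) avoids all of this with a much more elementary device: on a small ball where $|\partial_{\xi_1}p|\sim 1$, use the implicit function theorem to write $\{p=0\}=\{\xi_1+a(x,\xi')=0\}$ and set $\tilde p = ep$ with the \emph{scalar} factor $e=\tilde\chi_\varepsilon(\xi_1+a)/p\in S^2_\lambda$. No FIO is needed, the coordinates are unchanged, the characteristic set is literally the same set, and so conditions (i) and (ii) are preserved for free. Your proposal, if pursued, would require a separate argument that the relevant symbol class and curvature data survive the conjugation.

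Two further points. First, the paper derives the propagator decay not from a Hörmander parametrix at the original scale but after a $t_0$-dependent rescaling to a new frequency parameter $\mu=\sqrt{t_0\lambda}$, and then uses the explicit wave-packet representation of \cite[Prop.~4.3]{KoTa05a} in which the Hamiltonian flow $x^t$ appears directly; the curvature hypothesis (ii) enters through the non-degeneracy of $\partial x^1/\partial\xi\approx\tilde a_{\xi\xi}$. You gesture at this (``FBI calculus at scale $\lambda^{-1/2}$'') but your Hörmander-parametrix-plus-stationary-phase sketch does not resolve the uniformity over the time interval that the rescaling handles. Second, the Strichartz step in the paper invokes Keel--Tao, not Hardy--Littlewood--Sobolev; since some of the exponents $q$ in the family are endpoint exponents for the abstract Strichartz machinery, HLS alone does not suffice. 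Finally, keep in mind that the paper's goal in sketching this proof is specifically to verify that the implicit constants in \eqref{eq:dispersive} depend only on the seminorms in \eqref{eq:symbol-class} and the constants in (i) and (ii), so that \cref{thm:Koch-Tataru} can be applied uniformly across the family $\mathbf{b}\in\mathbf{B}$; your proposal does not engage with this uniformity question at all, which is the actual reason the walkthrough is included.
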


An inspection of the proof of Theorem 2.5~(i) in \cite{KoTa05a}, whose parts can be found in Sections~3 and~4 there, shows that the constant in \eqref{eq:dispersive} depends only on the constants in \eqref{eq:symbol-class} and the constants in the conditions (i) and (ii) above. This observation is fundamental to the proof of \cref{thm:cluster}, where we apply \cref{thm:Koch-Tataru} to a whole family of operators. We provide a brief walkthrough of the relevant arguments in \cite{KoTa05a} to convince the reader of the uniformity of the estimates.

\begin{proof}[Sketch of the proof]
Following the arguments of \cite[Lemma 3.8 (i)]{KoTa05a}, one first reduces the operator $p^w$ to some canonical form. Given $(x_0,\xi_0)\in \Sigma$, we have $|\partial_{\xi_k}p(x,\xi)|\sim 1$ in an $\varepsilon B^\lambda$-neighborhood of $(x_0,\xi_0)$ for some $k\in\{1,\dots,n\}$. Similar to \cite[Lemma 3.6]{KoTa05a}, covering $B^\lambda$ by dilates of such balls and using a partition of unity, it suffices to prove \cref{eq:dispersive} with $\chi$ on the left-hand side replaced by a smooth cut-off function $\chi_\varepsilon$ supported on one of the $\varepsilon B^\lambda$-balls, and $p$ replaced by $p_\varepsilon=\chi_\varepsilon \, p$. Assuming without loss of generality that $k=1$, the implicit function theorem yields
\[
\{(x,\xi):p(x,\xi)=0\}=\{(x,\xi):\xi_1+a(x,\xi)=0\}
\]
on a given $\varepsilon B^\lambda$-ball, with $a\in \lambda S^2_\lambda$. Let $e$ be the symbol defined by
\[
e(x,\xi) = \tilde \chi_\varepsilon(x,\xi)\, \frac{\xi_1+a(x,\xi)}{p(x,\xi)},
\]
where $\tilde \chi_\varepsilon=1$ on the support of $\chi_\varepsilon$.
Since $|\partial_{\xi_1} p(x,\xi)|\sim 1$ on the chosen $\varepsilon B^\lambda$-ball, we have $e\in S^2_\lambda$, whence it suffices to show \cref{eq:dispersive} with $p_\varepsilon$ on the right-hand side replaced by $\tilde p_\varepsilon=e p_\varepsilon$, see \cite[Lemma 3.3]{KoTa05a}. Note that the bounds for the derivatives of $a$ and $e$ depend only on the corresponding bounds for the derivatives of $p$ and the constant in condition (i). Moreover, both the characteristic set and the conditions (i) and (ii) remain invariant when replacing the $p$ by the microlocalized symbol $\tilde p_\varepsilon$. In summary, it suffices to prove \cref{thm:Koch-Tataru} for the special case where $p$ is of the form
\[
p(x,\xi) = \xi_1 + a(x,\xi'),
\]
where $\xi = (\xi_1,\xi') \in \R^{n}$ and $a\in \lambda S_\lambda^2$.

We make a slight change in notation. As in the beginning of Section 4.1 of \cite{KoTa05a}, we write $t=x_1$ and denote henceforth the component $x'\in \R^{n-1}$ again by $x$. The idea is to interpret the first coordinate as a time parameter. With this notation, we are led to consider the operator $D_t + a^w$, where $D_t=-i\partial_t$. It suffices to prove
\begin{equation}\label{eq:dispersive-canonical}
\lambda^{-\rho(q)} \norm{\chi^w(x,D) u}_{L^q(\R^{n})} \lesssim \norm*{(D_t+a^w)u}_{L^2(\R^{n})} + \norm{u}_{L^2(\R^{n})}
\end{equation}
for any smooth cut-off function $\chi$ on $\R^{n-1}\times \R^{n-1}$ compactly supported where $|x|\le 1$ and $|\xi|\le \lambda$ and all operators $D_t + a^w$ whose symbol satisfies (i) and (ii). Via functional calculus, the (real) symbol $a$ generates the isometric evolution operator
\[
S(t,s)=\exp(-i(t-s)a^w),\quad t,s\in\R.
\]
The idea is to derive \cref{eq:dispersive-canonical} from appropriate decay estimates associated with the evolution operator $S(t,s)$, which in turn follow from corresponding $L^1$-$L^\infty$ decay estimates by the methods of Keel and Tao \cite{KeTa98}. To prove \cref{eq:dispersive-canonical}, it suffices to show the following special case of Proposition 4.8 in \cite{KoTa05a} (with $f_1=0$ and $r=s$ there), which is the estimate
\begin{equation}\label{eq:dispersive-0}
\lambda^{-\rho(q)} \norm{\chi^w(x,D) u}_{L^q(\R^{n})} \lesssim \norm{f}_{L^1([0,1],L^2(\R^{n-1}))} + \norm{u_0}_{L^2(\R^{n-1})},
\end{equation}
where $u$ solves the initial value problem
\begin{equation}\label{eq:IVP}
(D_t+a^w)u = f,\quad u(0,\cdot)=u_0
\end{equation}
on the interval $[0,1]$. Due to Duhamel's formula, the solution of \cref{eq:IVP} can be represented as
\[
u(t,\cdot) = S(t,0)u_0 + i\int_0^t S(t,s) f(s) \, ds.
\]
Thus, to show \cref{eq:dispersive-0}, it suffices to show that the operators $\chi^w S(t,s)$ are bounded from $L^2$ to $L^q$ with operator norm
\begin{equation}\label{eq:decay-0}
\norm{\chi^w S(t,s)}_{2\to q}\lesssim \lambda^{\rho(q)}.
\end{equation}
As argued in the proof of Proposition 4.8 of \cite{KoTa05a} (note that $d=n-1$ there), \cref{eq:decay-0} follows by a $TT^*$ argument and by the results of \cite{KeTa98} by Keel and Tao from the decay estimates
\[
\norm{\chi^w S(t,s)\chi^w}_{1\to \infty}\lesssim \lambda^{(n-1+k)/2} \left|t-s\right|^{-(n-1-k)/2}.
\]
An inspection of the arguments in \cite{KeTa98} shows that the decay estimates they derive actually depend only on the constant of the initial $L^1$-$L^\infty$ estimates, which up to the endpoint are the arguments of Section 3 there, while the estimate for the endpoint is treated in Sections 4 and 5 there.

Following the arguments of the proof of Proposition 4.7 of \cite{KoTa05a}, to prove the decay estimate
\begin{equation}\label{eq:decay}
\norm{S(t_0,0)\chi^w u_0}_{\infty}\lesssim \lambda^{(n-1+k)/2} \left|t_0\right|^{-(n-1-k)/2} \|u_0\|_1.
\end{equation}
at time $t_0$ (one can assume without loss of generality $s=0$), one passes to a normalized setup with new frequency parameter $\mu=\sqrt{t_0\lambda}$ by rescaling via
\[
(t,x,\xi)\mapsto \bigg(t_0 t,\frac{x\sqrt{t_0}}{\sqrt\lambda},\frac{\xi\sqrt\lambda}{\sqrt{t_0}}\bigg).
\]
Let $u(t,\cdot) := S(t,0)\chi^w u_0$. We consider the functions
\[
v(t,x):=u\bigg(t_0t,\frac{x\sqrt{t_0}}{\sqrt\lambda}\bigg)\quad\text{and} \quad v_0(x):=u_0\bigg(\frac{x\sqrt{t_0}}{\sqrt\lambda}\bigg).
\]
Then \cref{eq:decay} is equivalent to
\begin{equation}\label{1-infty}
\norm{v(1,\cdot)}_{L^\infty(\R^{n-1})} \lesssim \mu^k \norm{v_0}_{L^1(\R^{n-1})}.
\end{equation}
(In \cite{KoTa05a}, one actually has to replace the rescaling $t/t_0$ by $t_0 t$, since we want $v(1,x)=u(t_0,\frac{x\sqrt{t_0}}{\sqrt\lambda})$.) Let
\[
\tilde a(t,x,\xi) = t_0\, a\bigg(t_0t,\frac{x\sqrt{t_0}}{\sqrt\lambda},\frac{\xi\sqrt\lambda}{\sqrt{t_0}}\bigg)\quad\text{and}\quad
\tilde \chi(x,\xi) = \chi \bigg(\frac{x\sqrt{t_0}}{\sqrt\lambda},\frac{\xi\sqrt\lambda}{\sqrt{t_0}}\bigg).
\]
Note that
\[
v(0,x) 
 = (\chi^w u_0)\bigg(0,\frac{x\sqrt{t_0}}{\sqrt\lambda}\bigg) = \tilde \chi^w(x,D)v_0.
\]
Thus, the function $v$ solves the initial value problem
\[
(D_t+\tilde a^w(t,x,D))v=0,\quad v(0,\cdot)=\tilde \chi^w(x,D)v_0.
\]
To prove \cref{1-infty}, we write
\[
v(t,y) = \int_{\R^{n-1}} K(t,y,\tilde y) (\chi^w v_0)(\tilde y) \,d\tilde y.
\]
Using Proposition 4.3 of \cite{KoTa05a}, we want to derive an $L^\infty$-bound for the kernel~$K$. To that end, we need to introduce some further notation of \cite[p.\ 235]{KoTa05a}. For fixed $(x,\xi)$, let $(x^t,\xi^t)$ denote the Hamiltonian flow given by the solution of the initial value problem
\[
\frac{d}{dt} x^t = \tilde a_\xi (x^t,\xi^t) ,\quad \frac{d}{dt} \xi^t = - \tilde a_x(x^t,\xi^t) ,\quad (x^0,\xi^0) =  (x,\xi),
\]
where $\tilde a_\xi$ and $\tilde a_x$ denote the partial derivatives with respect to $\xi$ and $x$. Let $\psi(t,x,y)$ be the phase shift given by
\[
\frac{d}{dt} \psi(t,x,\xi) = (-\tilde a + \xi^t \tilde a_\xi) (x^t,\xi^t),\quad \psi(0,x,\xi) = 0.
\]
Then, using Proposition 4.3 of \cite{KoTa05a} for the special case $s=0$, we get the representation
\[
K(t,y,\tilde y) =  \int_{\R^{n-1}} \int_{\R^{n-1}} e^{-\frac{1}{2}(\tilde{y}-x)^2} e^{i\Phi(t, x, \xi, y)} \, G(t, x, \xi, y) \, dx\, d\xi
\]
for $t\in[0,1]$ and $y,\tilde y\in \R^{n-1}$, with
\[
\Phi (t, x, \xi, y)= - \xi(\tilde{y}-x)+\psi(t, x, \xi)+ \xi^t(y-x^t)
\]
and some function $G$ satisfying
\begin{equation}\label{eq:G}
\big|(x^t-y)^\gamma\partial_x^\alpha \partial_{\xi}^\beta \partial_y^\nu G(t, x, \xi, y)\big| \le C_{\gamma, \alpha, \beta, \nu},
\end{equation}
where the constant $C_{\gamma, \alpha, \beta, \nu}$ only depends on the corresponding bounds for the derivatives of $\tilde a$.

Returning to the arguments of the proof of Proposition 4.7 in \cite{KoTa05a}, we consider the ball
\[
\tilde B=\{(x,\xi):|x|\le \mu t_0^{-1},|\xi|\le \mu\}.
\]
Then, since $\supp \tilde \chi\subseteq \tilde B$, there is some constant $C>0$ such that
\[
\begin{aligned}
v(t,y) = \int_{\R^{3(n-1)}} \textbf{1}_{C\tilde{B}}(x,\xi) G(t, x, \xi, y) e^{-\frac{1}{2}(\tilde{y}-x)^2}e^{i\Phi(t, x, \xi, y)}  (\tilde{\chi}^w v_0)(\tilde{y}) \, dx \, d\xi \, d\tilde{y}.
\end{aligned}
\]
up to some error term that can be estimated by $C_N \mu^{-N}\|v_0\|_{L^1(\R^{n-1})}$, see \cite[p.~241]{KoTa05a}. Then \cref{1-infty} follows once we have shown the bound
\begin{equation}\label{eq:K-bound-1}
\int_{\R^{n-1}} \textbf{1}_{C\tilde{B}}(x,\xi) |G(1,x,\xi,y)| \,d\xi \lesssim\mu^k.
\end{equation}
By \cref{eq:G}, we may bound the above integral by a constant $C_N$ times
\begin{equation}\label{eq:K-bound-2}
 \sup_{|x|\le C \mu t_0^{-1}} \int_{\R^{n-1}} \textbf{1}_{C\tilde{B}}(x,\xi) (1+|x^t-y|)^{-N} \,d\xi.
\end{equation}
As in \cite[p.\ 241]{KoTa05a}, a linear approximation of $t\mapsto \partial x^t/\partial\xi$ shows that
\begin{equation}\label{eq:linearization}
\frac{\partial x^1}{\partial\xi} = \frac{\partial x^t}{\partial\xi}\bigg|_{t=1} = \tilde a_{\xi\xi} (0,x,\xi) + O(\sqrt{t_0}),
\end{equation}
where $\tilde a_{\xi\xi}$ denotes the Hessian matrix of $\xi\mapsto \tilde a(0,x,\xi)$. Now the curvature condition of (ii) yields that $\tilde a_{\xi\xi}(0,x,\xi)$ admits a non-degenerate $(n-1-k)$-minor, whence one may locally choose coordinates $\xi=(\xi',\xi'')$ with $\xi'=(\xi_1,\dots,\xi_{n-1-k})$ so that the matrix $\tilde a_{\xi'\xi'}(0,x,\xi)$ is non-degenerate. As in \cite[p.\ 242]{KoTa05a}, estimating the integral \cref{eq:K-bound-2} in $\xi''$ via the support of $C\tilde B$ and in $\xi'$ via \cref{eq:linearization}, we get
\[
\int_{C\tilde B} (1+|x^t-y|)^{-N} \,d\xi \lesssim \mu^k,
\]
which yields \cref{eq:K-bound-1} and thus \cref{1-infty}.
\end{proof}

With \cref{thm:Koch-Tataru} at hand, we now prove \cref{lem:reduction}, which completes the proof of the spectral cluster estimates of \cref{thm:cluster}. Recall that $\mathbf{B}\subseteq [0,\infty)^N$ denotes the compact subset of \cref{thm:cluster}.

\begin{proof}[Proof of \cref{lem:reduction}]
Let $B_1,B_2\subseteq\R^{d_1}$ denote the open balls of radius 1 and 2 centered at the origin.
We have to show that there is some $\gamma=\gamma_{\mathbf B,\mathbf r}>0$ such that, for all $\textbf{b}\in \textbf{B}$ and all $\kappa\ge 1$, we have
\begin{equation}\label{eq:rescaling-3b}
\kappa^{2/(d_1+1)} \| u|_{B_1} \|_{p_{d_1}'} \le C_{\mathbf B,\mathbf r}\, \kappa \, \norm*{ u|_{B_2}}_2 + \kappa^{-1} \norm*{ f|_{B_2}}_2
\end{equation}
whenever $u\in L^2(\R^{d_1})$ and $f\in L^2(\R^{d_1})$ satisfy
\[
\big(\Delta_{\R^{d_1}}^{\gamma^2\kappa^2 \mathbf{b},\mathbf{r}} - \gamma^2 \kappa^4 \big)u = f.
\]
Let $\kappa\ge 1$. The parameter $\gamma=\gamma_{\textbf{B},\textbf{r}}>0$ will be chosen later in the proof. Recall that $\mathbf b=(b_1,\dots,b_N) \in [0,\infty)^N$, $\mathbf r=(r_1,\dots,r_N)\in(\N\setminus\{0\})^N$, $N\in\N\setminus\{0\}$, and
\[
\atL{\gamma^2 \kappa^2 \textbf{b}}{r}{d_1} =
(-\Delta_{\R^{d_1-2|\textbf{r}|_1}}) \oplus \tL{\gamma^2 \kappa^2 b_1}{2r_1} \oplus \dots  \oplus \tL{\gamma^2 \kappa^2 b_N}{2r_N}.
\]
By definition, on each block, we have
\[
\tL{\gamma^2 \kappa^2 b_n}{2r_n} = -\Delta_z + \tfrac 1 4 \left(\gamma^2 \kappa^2 b_n\right)^2 |z|^2 - i \left(\gamma^2 \kappa^2 b_n\right) \omega_{\R^{2r_n}}(z,\nabla_z),\quad z \in \R^{2r_n},
\]
where $\omega_{\R^{2r_n}}(z,w)=\langle J_{\R^{2r_n}} z, w\rangle_{\R^{2r_n}}$ is the standard symplectic form induced by
\[
J_{\R^{2r_n}} = \begin{pmatrix}
0 & -\mathrm{id}_{\R^{r_n}} \\
\mathrm{id}_{\R^{r_n}} & 0 
\end{pmatrix}
\in \R^{2r_n\times 2r_n}.
\]
In the Weyl calculus, the symbol $\sigma^{\textbf{b}}=\sigma_{\kappa,\gamma}^{\textbf{b},\textbf{r}}$ of $\Delta^{\gamma^2\kappa^2 \textbf{b},\textbf{r}}_{\R^{d_1}}-\gamma^2\kappa^4$ is given by
\[
\sigma^{\textbf{b}}(x,\xi) = \left|\xi+\tfrac{1}{2} \gamma^2\kappa^2 J^{\textbf{b}} x\right|^2 - \gamma^2\kappa^4,\quad x,\xi\in \R^{d_1},
\]
where
\begin{equation}\label{eq:J^b}
J^{\textbf{b}} = 
\begin{pmatrix}
0 &0  &\dots & 0 \\
0 & b_1 J_{\R^{2r_1}} & \ddots  & \vdots \\
 \vdots & \ddots & \ddots & 0 \\
0 & \dots & 0 & b_N J_{\R^{2r_N}}
\end{pmatrix}
\in \R^{d_1\times d_1}.
\end{equation}
In the following, given $\lambda>0$, we use again the notation
\[
B^\lambda = \{(x,\xi)\in \R^{d_1}\times \R^{d_1} : |x|<1 \text{ and } |\xi|<\lambda \}.
\]
We decompose the phase space into a non-elliptic region, where we will apply \cref{thm:Koch-Tataru}, and an elliptic region, where even better estimates are available. 

\medskip

(1) \textit{The non-elliptic region.}
We apply \cref{thm:Koch-Tataru} with parameters
\[
k =0 \quad \text{and}\quad
\lambda = \kappa^2.
\]
Let $\eta_0:\R^{d_1}\times\R^{d_1}\to\C$ be a smooth cut-off function such that $\eta_0(x,\xi)=1$ for $(x,\xi)\in B^1$ and $\eta_0(x,\xi)=0$ for $(x,\xi)\notin 2 B^1$. We define $\eta$ via
\[
\eta(x,\xi)=\eta_0(x,\xi/\lambda).
\]
Let $p^{\textbf{b}}$ and $a^{\textbf{b}}$ be the symbols given by
\[
p^{\textbf{b}}(x,\xi) = \lambda^{-1} \big(\sigma^{\textbf{b}}\eta \big)(x,\xi)
\quad\text{and}\quad
a^{\textbf{b}}(x,\xi) = \lambda^{-2} \big(\sigma^{\textbf{b}}\eta \big)(x,\xi).
\]
Since $\sigma^{\textbf{b}}$ is a polynomial of degree 2, we have
\[
\partial_x^\alpha\partial_\xi^\beta \sigma^{\textbf{b}}(x,\xi) = 0\quad\text{for } |\alpha|+|\beta|>2.
\]
Moreover, for all $(x,\xi)\in B^\lambda$ and $|\alpha|+|\beta|\le 2$,
\[
\lambda^{-2} \,\big|\partial_x^\alpha\partial_\xi^\beta \sigma^{\textbf{b}}(x,\xi)\big|
 = \bigg|\partial_x^\alpha\partial_\xi^\beta \bigg( \Big|\frac{\xi}{\lambda}+\frac{1}{2} \gamma^2 J^{\textbf{b}}\, x\Big|^2 - \gamma^2 \bigg) \bigg| \\
 \lesssim_{\textbf{B},\textbf{r},\alpha,\beta} \lambda^{-|\beta|}.
\]
Hence, via Leibniz rule, we have in particular
\[
\big| \partial_x^\alpha\partial_\xi^\beta a^{\textbf{b}}(x,\xi) \big|
\le C_{\alpha,\beta}
\begin{cases}
\lambda^{-|\beta|}& \text{for } |\alpha|\le 2,\\
\lambda^{\frac{|\alpha|-2}{2}-|\beta|}& \text{for } |\alpha|> 2.
\end{cases}
\]
Thus, $p^{\textbf{b}}=\lambda a^{\textbf{b}}\in \lambda S_\lambda^2$, which matches the requirements of \cref{thm:Koch-Tataru}.

Next, to apply \cref{thm:Koch-Tataru}, we verify the conditions (i) and (ii) from there. We consider $\Sigma=\chr p^{\textbf{b}} \cap B^\lambda$. For fixed $x\in\R^{d_1}$, the set
\[
\{\xi \in \R^{d_1} : \sigma^{\textbf{b}}(x,\xi) = 0 \}
\]
is a sphere of radius $\gamma\kappa^2  = \gamma \lambda$ with center $-\tfrac 1 2 \gamma^2 \kappa^2  J^{\textbf{b}}x =-\tfrac 1 2 \gamma^2 \lambda J^{\textbf{b}} x\in \R^{d_1}$. Note that $\eta|_{B^\lambda}=1$. Thus, by choosing $\gamma=\gamma_{\textbf{B},\textbf{r}}>0$ small enough, we can ensure that
\[
\{(x,\xi) \in \R^{d_1} \times \R^{d_1}  : \sigma^{\textbf{b}}(x,\xi) = 0 \text{ and } |x|<1 \} \subseteq B^\lambda
\]
and that for $|x|<1$ the $x$-section $\Sigma_x$ is the whole sphere of radius $\gamma \lambda$ with center $-\tfrac 1 2 \gamma^2 \lambda J^{\textbf{b}} x\in \R^{d_1}$. For condition (i) of \cref{thm:Koch-Tataru}, we observe that $(x,\xi)\in \Sigma$ implies that $\sigma^{\textbf{b}}(x,\xi)=0$ since $\eta|_{B^\lambda}=1$. Thus,
\begin{align*}
\left| \nabla_\xi \, p^{\textbf{b}} (x,\xi) \right|
& = \lambda^{-1} \left| \left( \nabla_\xi \, \sigma^{\textbf{b}} (x,\xi)\right) \eta(x,\xi) + \sigma^{\textbf{b}} (x,\xi) \, \nabla_\xi \, \eta(x,\xi)  \right| \\
& = 2\lambda^{-1} \left|\xi- \tfrac{1}{2} \gamma^2\lambda J^{\textbf{b}} x \right| = 2\gamma
\qquad \text{for all } (x,\xi) \in \Sigma.
\end{align*}
On the other hand, condition (ii) of \cref{thm:Koch-Tataru} follows from the fact that for $|x|<1$ the sphere $\Sigma_x$ has $d_1-1$ non-vanishing curvatures. Note that the required bound associated with the second fundamental form of $\Sigma_x$ in condition (ii) actually does not depend on $\textbf{b}\in \textbf{B}$.

Now, for $k=0$, the exponents $q$ and $\rho(k)$ in \cref{eq:exponents} are given by
\[
q = \frac{2(d_1+1)}{d_1-1} = p_{d_1}' \quad\text{and}\quad \rho(q) = \frac{d_1-1}{2(d_1+1)}.
\]
To show \cref{eq:rescaling-3b}, suppose that $u\in L^2(\R^{d_1})$ and $f\in L^2(\R^{d_1})$ satisfy
\[
\big(\Delta_{\R^{d_1}}^{\gamma^2\kappa^2 \textbf{b},\textbf{r}} -\gamma^2 \kappa^4\big) u = f.
\]
Let $\psi:\R^{d_1}\to\C$ be a smooth cut-off function with $\psi|_{B_1}=1$ and compact support in $B_2$. Then, applying \cref{thm:Koch-Tataru} with $k=0$ and $\lambda=\kappa^2$ yields
\[
\lambda^{-\rho(q)} \norm{\chi^w (\psi u)}_{p_{d_1}'} \lesssim_{\textbf{B},\textbf{r}} \norm{\psi u}_2 + \norm{(p^{\textbf{b}})^w (\psi u)}_2.
\]
Multiplying both sides by $\lambda^{1/2}$, we obtain
\begin{equation}\label{eq:KoTa-applied}
\lambda^{1/(d_1+1)} \norm{\chi^w (\psi u)}_{p_{d_1}'} \lesssim_{\textbf{B},\textbf{r}} \lambda^{1/2} \| u|_{B_2} \|_2 + \lambda^{1/2} \norm{(p^{\textbf{b}})^w (\psi u)}_2.
\end{equation}
We show that
\begin{equation}\label{eq:KoTa-L2-term}
\lambda^{1/2} \norm{(p^{\textbf{b}})^w (\psi u)}_2 \lesssim_{\textbf{B},\textbf{r}} \lambda^{1/2} \norm*{ u|_{B_2} }_2 + \lambda^{-1/2} \norm*{ f|_{B_2} }_2.
\end{equation}
Recall that $p^{\textbf{b}} = \lambda^{-1} \sigma^{\textbf{b}}\eta$. We want to use the calculus of pseudo-differential operators. We use the symbol classes $S^m$, which are defined as follows: Given $m\in\R$, we say that $p_0\in C^\infty (\R^{d_1}\times\R^{d_1},\C)$ lies in $S^m$ if
\begin{equation}\label{eq:Hoermander-symbol}
|\partial_\xi^\alpha \partial_x^\beta p_0(x,\xi)|\lesssim_{\alpha,\beta}\langle \xi\rangle ^{m-|\alpha|}
\quad \text{for all }x,\xi\in \R^{d_1},\alpha,\beta \in\N^{d_1},
\end{equation}
where $\langle \xi \rangle = (1 + |\xi|^2)^{1/2}$. (Note that we have changed from $\smash{\partial_x^\alpha\partial_\xi^\beta}$ to $\smash{\partial_\xi^\alpha \partial_x^\beta}$ in our notation compared to the definition of the symbol classes $S^j_\lambda$ of \cite{KoTa05a}.) If $p_0\in S^m$ for all $m\in\R$, we write $p_0\in S^{-\infty}$.

Note that the symbols $\sigma^{\textbf{b}}$ and $\eta$ satisfy the bounds
\begin{align*}
\lambda^{-2} \,\big|\partial_\xi^\alpha\partial_x^\beta \sigma^{\textbf{b}}(x,\xi)\big|
 & = \bigg|\partial_\xi^\alpha\partial_x^\beta \bigg( \Big|\frac{\xi}{\lambda}+\frac{1}{2} \gamma^2 J^{\textbf{b}} x\Big|^2 - \gamma^2 \bigg) \bigg| \\
 & \lesssim_{\textbf{B},\textbf{r},\alpha,\beta}  \Big\langle \frac \xi \lambda \Big\rangle^{2-|\alpha|} \lambda^{-|\alpha|} \quad\text{for } |x|\le 1
\end{align*}
and 
\[
|\partial_\xi^\alpha \partial_x^\beta \eta (x,\xi)| \lesssim_{M,\alpha,\beta} \Big\langle \frac \xi \lambda \Big\rangle^{-M-|\alpha|} \lambda^{-|\alpha|} \quad \text{for all } M\in\N.
\]
In particular, we have $\lambda^{-2} \sigma^{\textbf{b}} \in S^2$ and $\eta\in S^{-\infty}$. Now, the calculus of pseudo-differential operators yields
\begin{equation}\label{eq:asymptotic-expansion}
\lambda^{-2} (\sigma^{\textbf{b}} \eta)^w = \lambda^{-2} \eta^w (\sigma^{\textbf{b}})^w + (\tau^{\textbf{b}})^w,    
\end{equation}
where the symbol $\tau^{\textbf{b}}\in S^{-\infty}$ satisfies the bounds
\begin{equation}\label{eq:asymptotic-error}
|\partial_\xi^\alpha \partial_x^\beta \tau^{\textbf{b}} (x,\xi) | \lesssim_{\textbf{B},\textbf{r},M,\alpha,\beta} \Big\langle \frac \xi \lambda \Big\rangle^{-M-|\alpha|-1} \lambda^{-|\alpha|-1}.
\end{equation}
For details, see Section 3 of Chapter 2 in \cite{Fo89} and in particular Theorem 2.49 of \cite[p.\ 107]{Fo89}. The bound \cref{eq:asymptotic-error} actually follows from an inspection of the proof in \cite{Fo89}. Note that we can pass to the frequency scale $\lambda=1$ by rescaling via $(x,\xi)\mapsto (\lambda x,\xi/\lambda)$.

We derive $L^2$ bounds for the operators $\eta^w$ and $(\tau^{\textbf{b}})^w$ following the arguments of the proof of Corollary 2.56 of \cite[p.\ 112]{Fo89}. (Note that this is essentially the statement of the Calderón--Vaillancourt theorem, but we give a direct argument here since our symbols are sufficiently regular.) Let $K_{(\tau^{\textbf{b}})^w}$ denote the integral kernel of $(\tau^{\textbf{b}})^w$, that is,
\[
K_{(\tau^{\textbf{b}})^w}(x,y) = (2\pi)^{-d_1} \int_{\R^{d_1}} \tau^{\textbf{b}}(\tfrac 1 2 (x+y),\xi) e^{i\langle x-y,\xi\rangle} \, d\xi.
\]
Then, via integration by parts,
\[
(x-y)^\alpha K_{(\tau^{\textbf{b}})^w}(x,y) = C_\alpha \int_{\R^{d_1}} \partial_\xi^\alpha \tau^{\textbf{b}}(\tfrac 1 2 (x+y),\xi) e^{i\langle x-y,\xi\rangle} \, d\xi.
\]
Using \cref{eq:asymptotic-error}, we obtain
\begin{align*}
|(x-y)^\alpha K_{(\tau^{\textbf{b}})^w}(x,y)| & \lesssim_{\textbf{B},\textbf{r},M,\alpha} \int_{\R^{d_1}} \Big\langle \frac \xi \lambda \Big\rangle^{-M-|\alpha|-1} \lambda^{-|\alpha|-1} \, d\xi \notag \\
& \sim \lambda^{-|\alpha|+d_1-1} \int_{\R^{d_1}} \big\langle \tilde \xi\big \rangle^{-M-|\alpha|-1} \, d\tilde\xi.
\end{align*}
Since the last integral converges if we choose $M>0$ large enough, we get
\[
|K_{(\tau^{\textbf{b}})^w}(x,y)| \lesssim_{\textbf{B},\textbf{r},M,\beta} \left(\lambda\left|x-y\right|\right)^{-\beta} \lambda^{d_1-1}\quad\text{for all } \beta\ge 0.
\]
Thus, Young's convolution inequality shows that $(\tau^{\textbf{b}})^w$ is bounded on $L^2$ with
\begin{equation}\label{eq:error-L^2-bounds}
\|(\tau^{\textbf{b}})^w\|_{2\to 2} \lesssim_{\textbf{B},\textbf{r}} \lambda^{-1}.
\end{equation}
Similarly, $\eta^w$ is bounded on $L^2$ with
\begin{equation}\label{eq:error-L^2-bounds-ii}
\|\eta^w\|_{2\to 2} \lesssim 1.
\end{equation}
Hence, the left-hand side of \cref{eq:KoTa-L2-term} can be dominated by
\begin{align}
\lambda^{1/2} \|(p^{\textbf{b}})^w (\psi u)\|_2
& = \lambda^{-1/2} \|(\sigma^{\textbf{b}}\eta)^w (\psi u)\|_2 \notag \\
& \le \lambda^{-1/2} \|\eta^w (\sigma^{\textbf{b}})^w (\psi u)\|_2 + 
\lambda^{3/2} \|(\tau^{\textbf{b}})^w (\psi u)\|_2 \notag \\
& \lesssim_{\textbf{B},\textbf{r}} \lambda^{-1/2} \|\eta^w (\sigma^{\textbf{b}})^w (\psi u)\|_2 + \lambda^{1/2} \|u|_{B_2}\|_2. \label{eq:KoTa-L2-term-ii}
\end{align}
To prove \cref{eq:KoTa-L2-term}, it remains to bound the first summand of \cref{eq:KoTa-L2-term-ii}. Recall that $\lambda=\kappa^2$ and $(\sigma^{\textbf{b}})^w u =f$ by assumption, where
\[
(\sigma^{\textbf{b}})^w=\Delta^{\gamma^2\kappa^2 \textbf{b},\textbf{r}}_{\R^{d_1}}-\gamma^2\kappa^4.
\]
We choose a smooth cut-off function $\smash{\tilde\psi}:\R^{d_1}\to\C$ with $\smash{\tilde\psi}|_{\supp\psi}=1$ and compact support in $B_2$. Via Leibniz rule, we obtain
\begin{equation}\label{eq:Leibniz}
(\sigma^{\textbf{b}})^w (\psi u) = f\psi - u\left( \Delta_{\R^{d_1}} + i\gamma^2\kappa^2 \langle J^{\textbf{b}} \cdot\,,\nabla\rangle \right) \psi - \langle\nabla u,\nabla \psi\rangle,
\end{equation}
where $J^{\textbf{b}}$ is the $d_1\times d_1$ matrix of \cref{eq:J^b}. For the first two summands, using that $\psi$ is supported in $B_2$, we observe that \cref{eq:error-L^2-bounds-ii} implies
\begin{equation}\label{eq:Leibniz-i}
\|\eta^w (f\psi) \|_2
\lesssim \norm*{ f|_{B_2} }_2  
\end{equation}
and, since $\kappa^2=\lambda$, we also have
\begin{equation}\label{eq:Leibniz-ii}
\big \| \eta^w \big( u\left( \Delta_{\R^{d_1}} + i\gamma^2\kappa^2 \langle J^{\textbf{b}} \cdot\,,\nabla\rangle \right) \psi \big) \big \|_2 
\lesssim_{\textbf{B},\textbf{r}} \lambda \, \norm*{u|_{B_2}}_2 .
\end{equation}
Let $\tilde u := u\tilde \psi$. Then, since $\tilde\psi|_{\supp\psi}=1$, we have
\[
\eta^w \langle\nabla u,\nabla \psi\rangle = \sum_{j=1}^{d_1} \eta^w ((\partial_j \psi) \partial_j \tilde u).
\]
Note that the $j$-th summand on the right-hand side is the concatenation of the pseudo-differential operators associated with the symbols $\eta$, $(x,\xi)\mapsto \partial_j \psi(x)$ and $(x,\xi)\mapsto i\xi_j$. Note that $\eta\in S^{-\infty}$, $\partial_j \psi\in S^0$, and $\big((x,\xi)\mapsto i\xi_j\big) \in S^1$. Since the concatenation of pseudo-differential operators is the pseudo-differential operator associated with their twisted product, we have
\[
\eta^w ((\partial_j \psi) \partial_j \tilde u) = \omega_j^w \tilde u,
\]
where $\omega_j\in S^{-\infty}$ satisfies the bounds
\[
|\partial_\xi^\alpha \partial_x^\beta \omega_j (x,\xi) | \lesssim_{M,\alpha,\beta} \Big\langle \frac \xi \lambda \Big\rangle^{-M-|\alpha|+1} \lambda^{-|\alpha|+1},
\]
see \cite[Theorem 2.47]{Fo89}. Similar arguments as we used for \cref{eq:error-L^2-bounds} show that
\[
\|\omega_j^w\|_{2\to 2} \lesssim \lambda.
\]
Thus, we obtain 
\begin{equation}\label{eq:Leibniz-iii}
\|\eta^w \langle\nabla u,\nabla \psi\rangle\|_2 \lesssim  \lambda \, \|\tilde u\|_2 \lesssim_{\tilde\psi} \lambda \, \norm*{u|_{B_2}}_2 
\end{equation}
Combining the estimates \cref{eq:Leibniz-i}, \cref{eq:Leibniz-ii}, and \cref{eq:Leibniz-iii}, we get
\[
\|\eta^w (\sigma^{\textbf{b}})^w (\psi u) \|_2 \lesssim_{\textbf{B},\textbf{r}} \norm*{ f|_{B_2} }_2 + \lambda \, \norm*{u|_{B_2}}_2.
\]
Together with with \cref{eq:KoTa-L2-term-ii}, we obtain \cref{eq:KoTa-L2-term}. Then, using \cref{eq:KoTa-applied} together with \cref{eq:KoTa-L2-term} finally yields
\[
\lambda^{1/(d_1+1)} \norm*{\chi^w (\psi u)}_{p_{d_1}'}
 \lesssim_{\textbf{B},\textbf{r}} \lambda^{1/2} \norm*{ u|_{B_2} }_2 + \lambda^{-1/2} \norm*{ f|_{B_2} }_2 .
\]
Thus, we are done with the proof once we have verified that
\begin{equation}\label{eq:elliptic}
\lambda^{1/(d_1+1)} \norm*{(1-\chi^w) (\psi u)}_{p_{d_1}'} \lesssim_{\textbf{B},\textbf{r}} \lambda^{1/2} \norm{u|_{B_2}}_2 + \lambda^{-1/2} \norm{f|_{B_2}}_2.
\end{equation}

(2) \textit{The elliptic region.} The estimate \cref{eq:elliptic} follows from the ellipticity in the region of the phase space where $|x|\le 1$ and $|\xi|\ge \lambda$. Even though this estimate may not be surprising to experts, we provide a brief sketch of a proof for the convenience of the reader. The following arguments are essentially borrowed from \cite[Chapter 2]{Fo89}, but they can also be found in other places in the literature, for instance \cite{Ta74,So93}.

First, by choosing the constant $\gamma=\gamma_{\textbf{B},\textbf{r}}>0$ smaller if necessary, we can assume that $\gamma<1/10$ and
\[
\Big|\frac{1}{2} \gamma^2 J^{\textbf{b}} x\Big| \le \frac 1 {10} \, |x|\quad\text{for all } |x|\le 1 \text{ and } \textbf{b} \in \textbf{B}.
\]
Then, if $|x|\le 1$ and $|\xi|\ge \lambda$, we have
\[
|a^{\textbf{b}}(x,\xi)|
 = \bigg| \frac{\xi}{\lambda} + \frac{1}{2} \gamma^2 J^{\textbf{b}}\, \bar x \bigg|^2 - \gamma^2
 \sim 1 + \Big| \frac{\xi}{\lambda} \Big|^2 = \Big\langle \frac{\xi}{\lambda} \Big\rangle^2.
\]
Then one can argue as follows. Note that $d_1=r_0+2\left(r_1+\dots+r_N\right)\ge 2$, so in particular $p_{d_1}'<\infty$. Thus, if we put
\[
s := d_1\Big(\frac 1 2 -\frac 1 {p_{d_1}'}\Big)
   = d_1\Big(\frac 1 2 -\frac {d_1-1} {2(d_1+1)}\Big)
   = \frac {d_1} {d_1+1},
\]
then, by Sobolev's embedding theorem,
\begin{align}
\norm{(1-\chi^w)(\psi u) }_{p_{d_1}'}
& \lesssim \norm{(1-\chi^w)(\psi u)}_{L_s^2} \notag \\
& = \norm{(1-\Delta)^{s/2}(1-\chi^w)(\psi u)}_2 \notag \\
& \le \lambda^s \norm[\Big]{\Big(1-\frac{\Delta}{\lambda^2}\Big)^{s/2}(1-\chi^w)(\psi u)}_2.\label{eq:elliptic-sobolev}
\end{align}
We can assume without loss of generality that the symbol $\chi$ of \cref{thm:Koch-Tataru} satisfies $\chi(x,\xi)=\chi_0(x,\xi/\lambda)$ for some smooth cut-off function $\chi_0$ which is compactly supported in the ball $B^1$. Note that
\begin{align*}
\Big|\partial_\xi^\alpha \Big\langle \frac{\xi}{\lambda} \Big\rangle^s \Big|
& \lesssim_\alpha \Big\langle \frac \xi \lambda \Big\rangle^{s-|\alpha|} \lambda^{-|\alpha|},\\
|\partial_\xi^\alpha \partial_x^\beta  (1-\chi(x,\xi)) | & \lesssim_{\alpha,\beta} \Big\langle \frac \xi \lambda \Big\rangle^{-|\alpha|} \lambda^{-|\alpha|}.
\end{align*}
In particular, $\left\langle \frac \cdot \lambda \right\rangle\in S^s$ and $1-\chi\in S^0$, where $S^m$ denotes again the symbol class as defined in \cref{eq:Hoermander-symbol}. Let the symbol $\omega$ be given by
\[
\omega(x,\xi) := \Big\langle \frac \xi \lambda \Big\rangle^{s} \left(1-\chi(x,\xi)\right) \psi(x).
\]
Then $\omega\in S^s$ and
\[
|\partial_\xi^\alpha \partial_x^\beta \omega(x,\xi) |
 \lesssim_{\alpha,\beta} \Big\langle \frac \xi \lambda \Big\rangle^{s-|\alpha|} \lambda^{-|\alpha|}.
\]
Similar to \cref{eq:asymptotic-expansion}, the calculus of pseudo-differential operators yields
\begin{equation}\label{eq:asymptotic-A}
\Big(1-\frac{\Delta}{\lambda^2}\Big)^{s/2}\left(1-\chi^w\right)\psi = \omega^w + \rho_1^w,
\end{equation}
where $\rho_1 \in S^{s-1}$ is an error term with
\begin{equation}\label{eq:bounds-rho-1}
|\partial_\xi^\alpha \partial_x^\beta \rho_1 (x,\xi) |
 \lesssim_{\alpha,\beta} \Big\langle \frac \xi \lambda \Big\rangle^{s-|\alpha|-1} \lambda^{-|\alpha|-1}.
\end{equation}
Let $\tau^{\textbf{b}}$ be the symbol given by
\[
\tau^{\textbf{b}}(x,\xi )
 = \frac{\omega(x,\xi )}{\sigma^{\textbf{b}}(x,\xi )}\\
 = \lambda^{-2} \, \frac{\omega(x,\xi )}{|\frac{\xi}{\lambda}-\frac{1}{2} \gamma^2 J^{\textbf{b}} x|^2 - \gamma^2}.
\]
Then $\tau^{\textbf{b}}\in S^{s-2}$ and
\begin{equation}\label{eq:bounds-tau}
|\partial_\xi^\alpha \partial_x^\beta \tau^{\textbf{b}}(x,\xi ) |
 \lesssim_{\alpha,\beta} \Big\langle \frac \xi \lambda \Big\rangle^{s-|\alpha|-2} \lambda^{-|\alpha|-2}.
\end{equation}
As in the first part of the proof, let $\smash{\tilde\psi}:\R^{d_1}\to\C$ be a smooth cut-off function with $\smash{\tilde\psi}|_{\supp\psi}=1$ and compact support in $B_2$. Moreover, we may assume that there is another smooth cut-off function $\psi_0:\R^{d_1}\to\C$ such that $\smash{\psi_0}|_{\supp\psi}=1$ and $\smash{\tilde \psi}|_{\supp\psi_0}=1$. Then we have in particular $\supp\psi\subseteq\supp \psi_0 \subseteq \supp \smash{\tilde \psi}$.

Then, since $\smash{\psi_0}|_{\supp\psi}=1$, we have $\omega \psi_0=\omega$ and thus $\tau^{\textbf{b}} \psi_0=\tau^{\textbf{b}}$. Thus, again by the calculus of pseudo-differential operators, we can write
\begin{equation}\label{eq:asymptotic-B}
\omega^w = (\tau^{\textbf{b}})^w \psi_0 (\sigma^{\textbf{b}})^w + (\rho_2^{\textbf{b}})^w,
\end{equation}
where $\rho_2^{\textbf{b}}\in S^{s-1}$ is a symbol such that
\begin{equation}\label{eq:bounds-rho-2}
|\partial_\xi^\alpha \partial_x^\beta \rho_2^{\textbf{b}}(x,\xi) |
 \lesssim_{\textbf{B},\textbf{r},\alpha,\beta} \Big\langle \frac \xi \lambda \Big\rangle^{s-|\alpha|-1} \lambda^{-|\alpha|-1}.
\end{equation}
Similar to \cref{eq:error-L^2-bounds}, using the bounds \cref{eq:bounds-tau}, we see that the operator $(\tau^{\textbf{b}})^w$ is bounded on $L^2$. However, note that
\begin{align*}
|(x-y)^\alpha K_{(\tau^{\textbf{b}})^w}(x,y)| & \lesssim_{\textbf{B},\textbf{r},\alpha} \int_{\R^{d_1}} \Big\langle \frac \xi \lambda \Big\rangle^{s-|\alpha|-2} \lambda^{-|\alpha|-2} \, d\xi \\
& \sim \lambda^{-|\alpha|+d_1-2} \int_{\R^{d_1}} \big\langle \tilde \xi\big \rangle^{s-|\alpha|-2} \, d\tilde\xi
\end{align*}
and that the last integral converges if $|\alpha|>d_1+s-2$. Since $s=d_1/(d_1+1)<1$, we get
\begin{equation}\label{eq:integral-ops}
|K_{(\tau^{\textbf{b}})^w}(x,y)| \lesssim_{\textbf{B},\textbf{r},\beta} \left(\lambda\left|x-y\right|\right)^{-\beta} \lambda^{d_1-2}\quad\text{for all } \beta\ge d_1-1.
\end{equation}
Altogether, by Young's inequality, we obtain
\begin{equation}\label{eq:elliptic-errors}
\norm{(\tau^{\textbf{b}})^w}_{2\to 2} \lesssim_{\textbf{B},\textbf{r}} \lambda^{-2}.
\end{equation}
Moreover, one can show that $\rho_1^w$ and $(\rho_2^{\textbf{b}})^w$ are bounded on $L^2$ with
\begin{equation}\label{eq:elliptic-main}
\norm{\rho_1^w}_{2\to 2} \lesssim \lambda^{-1}\quad\text{ and } \quad
\norm{(\rho_2^\textbf{b})^w}_{2\to 2} \lesssim_{\textbf{B},\textbf{r}} \lambda^{-1}.
\end{equation}
However, this does not follow immediately as in the arguments above from the bounds \cref{eq:bounds-rho-1} and \cref{eq:bounds-rho-2}, since we only get \cref{eq:integral-ops} for $\beta\ge d_1$, in which case the function $x\mapsto |x|^{-\beta}$ is not locally integrable. However, since both symbols are in particular in the class $S^0$, they satisfy the regularity assumptions of the Calderón--Vaillancourt theorem, see for instance \cite[Chapter 2, Section 5]{Fo89}, and the bounds \cref{eq:elliptic-main} follow, for example, by applying the Calderón--Vaillancourt theorem to the operators $\lambda \rho_1^w$ and $\lambda(\rho_2^\textbf{b})^w$ and by using the rescaling $(x,\xi)\mapsto (\lambda x,\xi/\lambda)$.

Let $\tilde u := u\smash{\tilde \psi}$. Recall that $s=d_1/(d_1+1)$. Using \cref{eq:asymptotic-A} and \cref{eq:asymptotic-B}, and gathering the estimates \cref{eq:elliptic-sobolev}, \cref{eq:elliptic-errors} and \cref{eq:elliptic-main}, we obtain
\begin{align}
\lambda^{1/(d_1+1)} \norm{(1-\chi^w)(\psi u)}_{p_{d_1}'}
& \lesssim \lambda \, \norm[\Big]{\Big(1-\frac{\Delta}{\lambda^2}\Big)^{s/2}(1-\chi^w)(\psi u)}_2 \notag \\
 & \lesssim \lambda \, \norm{(\omega^w+\rho_1^w)\tilde u}_2 \notag \\
 & \le \lambda \left( \norm{(\tau^{\textbf{b}})^w \psi_0 (\sigma^{\textbf{b}})^w \tilde u}_2 + \norm{(\rho_2^{\textbf{b}})^w \tilde u}_2 + \norm{\rho_1^w \tilde u}_2 \right) \notag \\
 & \lesssim_{\textbf{B},\textbf{r}} \lambda^{-1} \norm{ \psi_0 (\sigma^{\textbf{b}})^w \tilde u}_2 + \norm{u|_{B_2}}_2.  \label{eq:elliptic-last}
\end{align}
Similar to \cref{eq:Leibniz}, we have
\[
(\sigma^{\textbf{b}})^w (\tilde\psi u) = f\psi - u\left( \Delta_{\R^{d_1}} + i\gamma^2\kappa^2 \langle J^{\textbf{b}} \cdot\,,\nabla\rangle \right) \tilde\psi - \langle\nabla u,\nabla \tilde\psi\rangle.
\]
Since $\smash{\tilde\psi}|_{\supp\psi_0}=1$, we have $\psi_0 \langle\nabla u,\nabla \smash{\tilde\psi}\rangle=0$. Thus, using similar bounds as in \cref{eq:Leibniz-i} and \cref{eq:Leibniz-ii}, that is,
\[
\|f\tilde \psi \|_2
\lesssim \norm*{ f|_{B_2} }_2  
\]
and
\[
\big \|  u\,\big( \Delta_{\R^{d_1}} + i\gamma^2\kappa^2 \langle J^{\textbf{b}} \cdot\,,\nabla\rangle \big) \psi  \big \|_2 
\lesssim_{\textbf{B},\textbf{r}} \lambda \, \norm*{u|_{B_2}}_2,
\]
we see that the last line of \cref{eq:elliptic-last} is in particular bounded by the right-hand side of \cref{eq:elliptic}, which finishes the proof.
\end{proof}

\section{Truncated restriction type estimates} \label{sec:restriction}

In this section, we prove \cref{thm:restriction-type}. Recall that $L=-(X_1^2+\dots+X_{d_1}^2)$ denotes a sub-Laplacian on an arbitrary two-step stratified Lie group $G$, where $X_1,\dots,X_{d_1}$ is a basis of the first layer of the stratification $\g=\g_1\oplus\g_2$. We write again $U=|\textbf{U}|^{1/2}$ for the square root of the Laplacian on $\g_2$, where $\textbf{U}$ is the vector of differential operators $\textbf{U}=(-iU_1,\dots,-iU_{d_2})$ and $U_1,\dots,U_{d_2}$ is a basis of $\g_2$. The restriction type estimates of \cref{thm:restriction-type} are stated in terms of the norms $\Vert\cdot\Vert_{M,2}$ given by 
\[
\norm{F}_{M,2} = \bigg(\frac{1}{M} \sum_{K\in\Z} \,\sup _{\lambda \in [\frac{K-1}{M}, \frac{K}{M})}|F(\lambda)|^2\bigg)^{1 / 2},\quad M\in(0,\infty).
\]
By \cite[Lemma~3.4]{ChHeSi16}, for every bounded Borel function $F:\R\to\C$,
\begin{equation}\label{eq:norm}
\|F\|_{L^2} \le \norm{F}_{M,2} \le C_{s} \big( \norm{F}_{L^2} + M^{-s} \norm{F}_{L^2_s} \big)\quad \text{for } s>1/2, 
\end{equation}
whence $\Vert\cdot\Vert_{M,2}$ is stronger than the $L^2$-norm.

To prove \cref{thm:restriction-type}, we need to show that
\begin{equation}\label{eq:restriction-type-0}
\norm{ F(L)\chi(2^\ell U) }_{p\to 2}
\le C_{p,\chi} 2^{-\ell d_2(\frac 1 p - \frac 1 2)} \|F\|_2^{1-\theta_p} \norm{F}_{2^{\ell},2}^{\theta_p} \quad \text{for all }\ell\in\Z
\end{equation}
and all $1\le p\le \min\{p_{d_1},p_{d_2}\}=\min\{2\frac{d_1+1}{d_1+3},2\frac{d_2+1}{d_2+3}\}$, where $\theta_p\in [0,1]$ satisfies
\[
\frac 1 p = (1-\theta_p) + \frac{\theta_p}{\min\{p_{d_1},p_{d_2}\}}.
\]
As our notation indicates, \cref{eq:restriction-type-0} is derived by interpolation between the endpoints $p=1$ and $p=\min\{p_{d_1},p_{d_2}\}$. The case $p=1$, where we have only the $L^2$-norm on the right-hand side, follows from a Plancherel estimate for the integral kernel (see also Section III.5 of \cite{ChOuSiYa16}).

\begin{lemma}\label{lem:restriction-type-p=1}
The restriction type estimate \cref{eq:restriction-type-0} holds true for $p=1$, i.e.,
\[
\norm{ F(L)\chi(2^\ell U) }_{1\to 2} \le C_{\chi} 2^{-\ell d_2/2} \|F\|_2 \quad \text{for all }\ell\in\Z.
\]
\end{lemma}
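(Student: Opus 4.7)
The plan is to use that for the left-invariant convolution operator $T := F(L)\chi(2^\ell U)$, the operator norm $\|T\|_{L^1\to L^2}$ coincides with the $L^2(G)$-norm of its convolution kernel $\mathcal K$ (Young's inequality on one side, testing against an approximate identity on the other). It therefore suffices to prove $\|\mathcal K\|_{L^2(G)}^2 \lesssim 2^{-\ell d_2}\|F\|_2^2$. A standard approximation argument allows us to assume $F\in \mathcal S(\R)$, so that \cref{prop:conv-kernel}, applied to the joint multiplier $F(\lambda)\chi(2^\ell|\mu|)$, yields an explicit formula for $\mathcal K\in\mathcal S(G)$.

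Applying Plancherel's theorem successively in the central variable $u\in\g_2$ and in the auxiliary variable $\tau\in\R^{r_0}$ appearing in the kernel formula of \cref{prop:conv-kernel}, using that the change of variables $x = R_\mu y$ is orthogonal, and using the orthogonality in $L^2(\R^{2r_n})$ of the radial Laguerre functions $\varphi_k^{(\lambda,r_n)}$ (a direct calculation gives $\|\varphi_k^{(\lambda,r_n)}\|_2^2 = c_{r_n}\lambda^{r_n}\binom{k+r_n-1}{k}$), the Plancherel identity for $\mathcal K$ reduces to
\[
\|\mathcal K\|_2^2 = c_{\mathbf r,r_0} \int_{\g_2^*} |\chi(2^\ell|\mu|)|^2 \sum_{\mathbf k\in\N^N}\bigg[\prod_{n=1}^N (b_n^\mu)^{r_n}\binom{k_n+r_n-1}{k_n}\bigg] \int_{\R^{r_0}}|F(|\tau|^2+\eigvp{k}{\mu})|^2\,d\tau\,d\mu.
\]

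It remains to estimate this by $2^{-\ell d_2}\|F\|_2^2$. Rescaling $\tilde\mu = 2^\ell\mu$ produces a factor $2^{-\ell(d_2+\bar d_1/2)}$, where $\bar d_1 = 2|\mathbf r|_1$, thanks to the degree-one homogeneity of $\mu\mapsto b_n^\mu$ and hence of $\mu\mapsto \eigvp{k}{\mu}$. Writing $\tilde\mu = \rho\omega$ in polar coordinates, the compact supports of $\chi$ and $F$ force $\rho\sim 1$ and $\eigvp{k}{\omega}\sim 2^\ell$; reducing the $\tau$-integral to a one-dimensional integral in $|\tau|^2$ and substituting $v = 2^{-\ell}\rho\,\eigvp{k}{\omega}$ bounds each summand by $\lesssim (2^\ell/\eigvp{k}{\omega})^{d_2+\bar d_1/2}\|F\|_2^2$. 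A volume comparison on the cluster $\{\mathbf k : \sum_n(2k_n+r_n)b_n^\omega\sim 2^\ell\}$ then yields $\sum_{\mathbf k}\prod_n k_n^{r_n-1} \lesssim (2^\ell)^{\bar d_1/2}$, which exactly cancels the $2^{-\ell\bar d_1/2}$ from the rescaling and leaves the required bound $2^{-\ell d_2}\|F\|_2^2$.

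The main technical point is this last counting step: the Plancherel multiplicities $\prod_n\binom{k_n+r_n-1}{k_n}$ must combine precisely with the Jacobian of the rescaling and with the volume of the relevant $\mathbf k$-cluster on the ``anisotropic sphere'' to produce the sharp decay $2^{-\ell d_2}$. Once this bookkeeping is carried out, no further ingredients (such as spectral cluster estimates) are required for the $p=1$ endpoint.
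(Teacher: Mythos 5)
Your proposal follows essentially the same route as the paper. The paper also reduces $\|F(L)\chi(2^\ell U)\|_{1\to 2}$ to $\|\mathcal K_\ell\|_{L^2(G)}$, invokes the same Plancherel formula for the kernel (it cites \cite[Corollary~8]{MaMue14b} rather than re-deriving it from \cref{prop:conv-kernel}, but the two are equivalent and your derivation is correct, including the identity $\|\varphi_k^{(\lambda,m)}\|_2^2 \sim \lambda^m\binom{k+m-1}{k}$), and then passes to polar coordinates in $\mu$, exploits the compact supports of $F$ and $\chi$ together with the degree-one homogeneity of $\mu\mapsto\mathbf b^\mu$, and counts lattice points in the anisotropic cluster. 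Where the write-ups diverge is in the order of the bookkeeping: the paper first absorbs the binomial multiplicities into the support constraint, using $\binom{k_n+r_n-1}{k_n}\sim(k_n+1)^{r_n-1}$ and $b_n^\mu(k_n+1)\lesssim_A 1$ to replace $\prod_n(b_n^\mu)^{r_n}\binom{k_n+r_n-1}{k_n}$ by the cleaner weight $\prod_n b_n^\mu$, and only then does the polar substitution $\rho=(\eigvp{k}{\omega})^{-1}\lambda$ and the count $\sum_{\mathbf k}\prod_n b_n^\omega\lesssim(2^\ell\lambda)^N$. You instead keep the full binomial weights and compensate with a volume count on the anisotropic simplex. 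Both work; the paper's simplification is arguably slightly tidier.

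One small imprecision in your write-up: the summand bound $\lesssim(2^\ell/\eigvp{k}{\omega})^{d_2+\bar d_1/2}\|F\|_2^2$ should just read $\lesssim(2^\ell/\eigvp{k}{\omega})^{d_2}\|F\|_2^2$; the substitution $v=2^{-\ell}\rho\,\eigvp{k}{\omega}$ in the radial integral only produces the Jacobian $(2^\ell/\eigvp{k}{\omega})^{d_2}$. The missing power $(2^\ell)^{\bar d_1/2}$ is exactly what the lattice count $\sum_{\mathbf k}\prod_n(b_n^\omega)^{r_n}\binom{k_n+r_n-1}{k_n}\lesssim\prod_n(b_n^\omega)^{r_n}(2^\ell/b_n^\omega)^{r_n}=(2^\ell)^{\bar d_1/2}$ supplies, and you should carry the $(b_n^\omega)^{\pm r_n}$ explicitly through that step (they cancel, but this cancellation cannot be dismissed since the $b_n^\omega$ are not bounded below on $S^{d_2-1}$). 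Once that is written out, the argument closes as you describe.
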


\begin{proof}
Let $\mathcal K_\ell$ denote the convolution kernel of the operator $F(L)\chi(2^\ell U)$. Then
\[
\| F(L)\chi(2^\ell U) f \|_2 = \| f * \mathcal K_\ell \|_2 \le  \| f \|_1  \| \mathcal K_\ell \|_2.
\]
By \cite[Corollary 8]{MaMue14b}, using the notation of \cref{prop:conv-kernel}, 
\begin{equation}
\begin{split}
\| \mathcal K_\ell \|_2^2
= (2 \pi)^{|\textbf{r}|_1-(d_1+d_2)} \int_{\mathfrak{g}_{2, r}^*} & \int_0^\infty \sum_{\textbf{k} \in \mathbb{N}^N}\left|F(s+\eigvp{k}{\mu})\chi(2^\ell|\mu|)\right|^2 \\
& \times \prod_{n=1}^N\left[\left(b_n^\mu\right)^{r_n}
\binom{k_n+r_n-1}{k_n}\right] d \sigma_{r_0}(s) \,d \mu, \label{eq:rest-interp-1}
\end{split}
\end{equation}
where $\sigma_{r_0}$ is the Dirac delta at 0 if $r_0 =0$, and
\[
d\sigma_{r_0}(s)=\frac{\pi^{r_0 / 2}}{\Gamma(r_0 / 2)} s^{r_0 / 2} \frac{d s}{s}\quad \text{if } r_0>0.
\]
Recall from \cref{prop:conv-kernel} that
\[
\eigvp{k}{\mu} = \sum_{n=1}^N \left(2k_n+r_n\right)b_n^\mu,
\]
where the function $\mu\mapsto\textbf{b}^\mu=(b_1^\mu,\dots,b_N^\mu)\in [0,\infty)^N$ is homogeneous of degree 1, see \cref{prop:rotation}. Using on the one hand that
\[
b_n^\mu \left(k_n+1\right) \le \eigvp{k}{\mu} \lesssim_A 1
\]
whenever $s+\eigvp{k}{\mu}\in\supp F\subseteq A$, where $A\subseteq (0,\infty)$ is the fixed compact subset of \cref{thm:restriction-type}, and using on the other hand
\[
\binom{k_n+r_n-1}{k_n} \sim (k_n+1)^{r_n-1},
\]
the right-hand side of \cref{eq:rest-interp-1} can be bounded by a constant times
\begin{equation}\label{eq:Plancherel-computation}
\int_0^\infty \int_{\mathfrak{g}_{2, r}^*} \sum_{\textbf{k} \in \mathbb{N}^N}\left|F(s+\eigvp{k}{\mu})\chi(2^\ell|\mu|)\right|^2 \prod_{n=1}^Nb_n^\mu \,d \mu \,d \sigma_{r_0}(s).
\end{equation}
We rewrite the integral over $\mu$ in polar coordinates, that is,
\[
\mu = \rho \omega \quad\text{for } \rho \in [0,\infty) \text{ and } |\omega|=1.
\]
Then, since $\mu\mapsto\textbf{b}^\mu$ is homogeneous of degree 1,
\[
\eigvp{k}{\mu} = \rho \eigvp{k}{\omega}.
\]
Thus, using $\rho=|\mu|\sim 2^{-\ell}$, \cref{eq:Plancherel-computation} is bounded by a constant times
\begin{align*}
2^{-\ell (d_2+N)}\int_0^\infty \int_{S^{d_2-1}}  \int_0^\infty \sum_{\textbf{k} \in \mathbb{N}^N} & \left|F(s+\rho\eigvp{k}{\omega})\chi(2^\ell\rho)\right|^2 \\
& \times \prod_{n=1}^N b_n^\omega \,\frac{d\rho}{\rho} \,d\sigma(\omega) \,d \sigma_{r_0}(s).
\end{align*}
Substituting $\rho=(\eigvp{k}{\omega})^{-1}\lambda$ in the inner integral, we see that the above term equals
\begin{align*}
2^{-\ell (d_2+N)}
\int_0^\infty \int_{S^{d_2-1}} \int_0^\infty \sum_{\textbf{k} \in \mathbb{N}^N}\left|F(s+\lambda)\chi(2^\ell (\eigvp{k}{\omega})^{-1} \lambda)\right|^2\\
\times \prod_{n=1}^Nb_n^{\omega} \,\frac{d \lambda}\lambda \,d\sigma(\omega) \,d \sigma_{r_0}(s).
\end{align*}
If $2^\ell (\eigvp{k}{\omega})^{-1} \lambda\in\supp\chi$, then
\[
(2k_n+r_n) b_n^\omega \le \eigvp{k}{\omega} \sim 2^\ell \lambda.
\]
Thus, $k_n\lesssim 2^\ell \lambda (b_n^\omega)^{-1}$ for the non-vanishing summands in the above sum over $\textbf{k}$. Hence, the above term is bounded by a constant times
\[
2^{-\ell d_2}
\int_0^\infty \int_0^\infty \left|F(s+\lambda)\right|^2 \lambda^N \frac{d \lambda}\lambda \,d \sigma_{r_0}(s),
\]
which is comparable to $2^{-\ell d_2} \|F\|^2_2$ since $F$ is compactly supported.
\end{proof}

It remains to show \cref{eq:restriction-type-0} for $p=\min\{p_{d_1},p_{d_2}\}$. Our arguments will show that
\begin{equation}\label{eq:restriction-type-i}
\norm*{ F(L)\chi(2^\ell U) }_{p\to 2}
\le C_{p,\chi} 2^{-\ell d_2(\frac 1p - \frac 1 2)} \norm*{F}_{2^{\ell},2} \quad \text{for all }\ell\in\Z
\end{equation}
and all $1\le p\le \min\{p_{d_1},p_{d_2}\}$, so \cref{eq:restriction-type-0} is in fact just \cref{eq:restriction-type-i} enhanced by interpolation with the $L^1$-$L^2$ estimate of \cref{lem:restriction-type-p=1}, where we have the $L^2$-norm of the multiplier instead of the Cowling--Sikora norm $\|\cdot\|_{2^\ell,2}$ on the right-hand side.

\begin{remark}\label{rem:ell_0}
The support conditions on the multipliers $F$ and $\chi$ actually imply
\begin{equation}\label{eq:ell_0}
F(L)\chi(2^\ell U)=0 \quad \text{for all } \ell<-\ell_0,
\end{equation}
where $\ell_0\in\N$ depends on the matrices $J_\mu$ of \cref{eq:skew-form-ii}, the inner product $\langle\cdot,\cdot\rangle$ on $\g$, and the compact subset $A\subseteq (0,\infty)$ of \cref{thm:restriction-type} where $F$ is supported, which means that the restriction type estimate \cref{eq:restriction-type-i} is trivial for $\ell<-\ell_0$. More precisely, using the formula \cref{eq:conv-kernel} of \cref{prop:conv-kernel} for the convolution kernel $\mathcal K_{F(L)\chi(2^\ell U)}$ of $F(L)\chi(2^\ell U)$, we have
\begin{align*}
\mathcal K_{F(L)\chi(2^\ell U)}(x,u) = & (2\pi)^{-r_0-d_2} \int_{\g_{2,r}^*}   \int_{\R^{r_0}} \sum_{\mathbf{k}\in\N^N} F(|\tau|^2+\eigvp{k}{\mu})\chi(2^\ell|\mu|) \\
& \times \bigg[\prod_{n=1}^N \varphi_{k_n}^{(b_n^\mu,r_n)}(R_\mu^{-1} P_n^\mu x)\bigg] e^{i\langle \tau, R_\mu^{-1} P_0^\mu x\rangle} \, e^{i\langle \mu, u\rangle} \, d\tau  \, d\mu,
\end{align*}
Recall from \cref{prop:conv-kernel} that
\[
\eigvp{k}{\mu} = \sum_{n=1}^N \left(2k_n+r_n\right)b_n^\mu,
\]
and from \cref{prop:rotation} that the function $\mu\mapsto\textbf{b}^\mu=(b_1^\mu,\dots,b_N^\mu)\in [0,\infty)^N$ is homogeneous of degree 1. Now, if $|\tau|^2+\eigvp{k}{\mu}\in \supp F\subseteq A$ and $2^\ell|\mu|\in\supp \chi$, then
\begin{equation}\label{eq:ell_0-spectral}
1 \gtrsim_A  |\tau|^2+\eigvp{k}{\mu}
\ge \eigvp{k}{\mu}
\ge \sum_{n=1}^N r_n b_n^\mu \\
= |\mu| \sum_{n=1}^N r_n b_n^{\bar \mu}
\sim 2^{-\ell} \sum_{n=1}^N r_n b_n^{\bar \mu},
\end{equation}
where $\bar \mu = |\mu|^{-1}\mu$. Due to \cref{eq:spectral-decomp}, we have
\begin{equation}\label{eq:Hilbert-Schmidt}
\sum_{n=1}^N r_n b_n^{ \mu} \sim_{\textbf{r}} \bigg(\sum_{n=1}^N 2 r_n \big(b_n^{ \mu}\big)^2 \bigg)^{1/2} = (\tr(J_{ \mu}^* J_{ \mu}))^{1/2}.
\end{equation}
As the latter expression is the pullback of the Hilbert-Schmidt norm via the injective map $\mu\mapsto J_\mu$, the left-hand side of \cref{eq:Hilbert-Schmidt} does not vanish for all $\mu\neq 0$. On the other hand, by \cref{prop:rotation}, all maps $\mu\mapsto b_n^\mu$ are continuous on $\g_2^*$. Thus \cref{eq:ell_0-spectral} implies $2^{-\ell}\lesssim_A 1$, which yields \cref{eq:ell_0}. Note that the constants occurring in \cref{eq:ell_0-spectral} and \cref{eq:Hilbert-Schmidt} depend only on the spectral properties of the matrices $J_\mu$, the inner product $\langle\cdot,\cdot\rangle$, and the set $A\subseteq (0,\infty)$. 
\end{remark}

By \cref{prop:joint-calculus}, we have
\begin{equation}\label{eq:joint-calculus-ii}
\big(F(L)\chi(2^\ell U)f\big)^\mu = F(L^\mu)\chi(2^\ell |\mu|) f^\mu
\end{equation}
for all $\mu\in\g_2^*$ and all Schwartz functions $f$ on $G$, where $f^\mu$ denotes the $\mu$-section of the partial Fourier transform along $\g_2$ and $L^\mu$ is the $\mu$-twisted Laplacian on $\g_1$. The idea is to apply a spectral cluster estimate for the twisted Laplacian $L^\mu$ and the classical Stein--Tomas restriction estimate \cite{To75} on the second layer $\g_2$. The spectral cluster estimate for $L^\mu$ is derived from the spectral cluster estimate \cref{eq:cluster} of the previous section.

\begin{lemma}
If $1\le p \le p_{d_1}$, then
\begin{equation}\label{eq:cluster-rescaled}
\norm*{\mathbf{1}_{[K|\mu|,(K+1)|\mu|)}(L^\mu) }_{p\to 2} \le C_p\, |\mu|^{\frac{d_1}2(\frac 1 p - \frac 1 2)}(K+1)^{\frac{d_1}2(\frac 1 p - \frac 1 2)-\frac 1 2}
\end{equation}
for all $K\in \N$ and almost all $\mu\in\g_2^*$.
\end{lemma}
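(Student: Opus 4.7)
The plan is to reduce the statement to the spectral cluster estimates for anisotropic twisted Laplacians in \cref{thm:cluster} via two reductions: first, conjugation by the orthogonal transformation $R_\mu$ from \cref{prop:rotation}; and second, a dilation on $\R^{d_1}$ that exploits the homogeneity of $\mu\mapsto \mathbf b^\mu$. Since the set $\g_2^*\setminus\g_{2,r}^*$ has measure zero, it suffices to consider $\mu\in\g_{2,r}^*$.

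By \cref{eq:rotation} in \cref{prop:rotation}, for $\mu\in\g_{2,r}^*$ the operator $L^\mu$ is conjugate to $\atL{b^\mu}{r}{d_1}$ via the orthogonal transformation $R_\mu:\R^{d_1}\to\g_1$, which is an isometry of every $L^p(\R^{d_1})$. Hence the $L^p\to L^2$ norm of $\mathbf 1_{[K|\mu|,(K+1)|\mu|)}(L^\mu)$ coincides with that of $\mathbf 1_{[K|\mu|,(K+1)|\mu|)}\big(\atL{b^\mu}{r}{d_1}\big)$. Next, set $\bar\mu=\mu/|\mu|$, so that $\mathbf b^\mu=|\mu|\,\mathbf b^{\bar\mu}$ by homogeneity (\cref{prop:rotation}~(i)). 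Consider the dilation $D_a f(x)=f(ax)$ on $\R^{d_1}$ with $a=|\mu|^{1/2}$. A direct inspection of \cref{def:classical-twisted}, comparing the three terms $-\Delta_z$, $\tfrac 14(b_n^\mu)^2|z|^2$ and $-ib_n^\mu\omega(z,\nabla_z)$ of each block $\tL{b_n^\mu}{2r_n}$ and the Euclidean block $-\Delta_{\R^{r_0}}$ separately, yields the intertwining identity
\[
D_{|\mu|^{1/2}}^{-1}\,\atL{b^\mu}{r}{d_1}\,D_{|\mu|^{1/2}} = |\mu|\,\atL{b^{\bar\mu}}{r}{d_1},
\]
and consequently, by the functional calculus,
\[
\mathbf 1_{[K|\mu|,(K+1)|\mu|)}\big(\atL{b^\mu}{r}{d_1}\big) = D_{|\mu|^{1/2}}\,\mathbf 1_{[K,K+1)}\big(\atL{b^{\bar\mu}}{r}{d_1}\big)\,D_{|\mu|^{1/2}}^{-1}.
\]

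A standard change of variables gives $\|D_a\|_{L^p\to L^p}=a^{-d_1/p}$, and hence
\[
\|D_a T D_a^{-1}\|_{p\to 2} = a^{d_1(\frac 1p-\frac 12)}\,\|T\|_{p\to 2}
\]
for any bounded operator $T$. Applied to $T=\mathbf 1_{[K,K+1)}(\atL{b^{\bar\mu}}{r}{d_1})$ with $a=|\mu|^{1/2}$, this produces the prefactor $|\mu|^{\frac{d_1}{2}(\frac 1p-\frac 12)}$ in the desired estimate. It remains to bound $\|\mathbf 1_{[K,K+1)}(\atL{b^{\bar\mu}}{r}{d_1})\|_{p\to 2}$ uniformly in $\bar\mu\in S^{d_2-1}\cap \g_{2,r}^*$. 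Since $\mathbf b^{\,\cdot}:\g_2^*\to[0,\infty)^N$ is continuous by \cref{prop:rotation}~(i) and $S^{d_2-1}$ is compact, the set
\[
\mathbf B:=\{\mathbf b^{\bar\mu}:\bar\mu\in S^{d_2-1}\}\subseteq[0,\infty)^N
\]
is compact, so \cref{thm:cluster} applies and gives
\[
\|\mathbf 1_{[K,K+1)}(\atL{b^{\bar\mu}}{r}{d_1})\|_{p\to 2}\le C_{\mathbf B,\mathbf r,p}(K+1)^{\frac{d_1}{2}(\frac 1p-\frac 12)-\frac 12}
\]
for $1\le p\le p_{d_1}$, uniformly in $\bar\mu$. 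Combining the three steps produces \cref{eq:cluster-rescaled}.

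I do not anticipate a genuine obstacle here: all the work has already been done in \cref{prop:rotation} (which diagonalizes $L^\mu$ into the anisotropic model) and \cref{thm:cluster} (which provides the uniform cluster estimate). The only point requiring a little care is verifying the dilation identity; the Euclidean block scales by $|\mu|$ under $D_{|\mu|^{1/2}}$, and on each symplectic block the three terms all scale by the same factor $|\mu|$ precisely because $b_n^\mu=|\mu|\,b_n^{\bar\mu}$, making the rescaling compatible across all blocks simultaneously.
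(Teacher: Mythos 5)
Your proposal is correct and follows essentially the same route as the paper: conjugation by $R_\mu$ via \cref{eq:rotation}, the dilation identity $D_{|\mu|^{1/2}}^{-1}\atL{b^\mu}{r}{d_1}D_{|\mu|^{1/2}}=|\mu|\atL{b^{\bar\mu}}{r}{d_1}$ together with the $L^p\to L^2$ scaling of dilations, and then \cref{thm:cluster} applied with the compact parameter set $\mathbf B=\{\mathbf b^{\bar\mu}:|\bar\mu|=1\}$. The only cosmetic difference is that the paper records the dilation identity directly at the level of spectral projections (\cref{eq:rescaling-iii}) rather than first for the operator, which is what you do; the two formulations are equivalent by the functional calculus.
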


\begin{proof}
Let $N\in\N\setminus\{0\}$, $r_0\in\N$, $\textbf{r}=(r_1,\dots,r_N)\in (\N\setminus\{0\})^N$, $\textbf{b}^\mu=(b_1^\mu,\dots,b_N^\mu)\in [0,\infty)^N$, and $\mu\mapsto R_\mu\in O(d_1)$ be as in \cref{prop:rotation}. By \cref{eq:rotation}, we have
\begin{equation}\label{eq:rotation-iii}
\big(F(L^\mu)\phi\big)\circ R_\mu = F\big(\atL{b^\mu}{r}{d_1}\big)(\phi\circ R_\mu)
\end{equation}
for almost all $\mu$ lying in a (non-empty) Zariski-open subset $\g_{2,r}^*\subseteq \g_2^*$. Moreover,
\[
|\mu|^{-1}\atL{b^{\mu}}{r}{d_1} \phi = \Big(\atL{b^{\bar\mu}}{r}{d_1}\big(\phi(|\mu|^{-\frac 1 2}\,\cdot\,)\big)\Big)(|\mu|^{\frac 1 2}\,\cdot\,),
\quad \text{where}\quad \bar\mu = \frac{\mu}{|\mu|},
\]
and thus
\begin{equation}\label{eq:rescaling-iii}
\mathbf{1}_{[K|\mu|,(K+1)|\mu|)}(\atL{b^\mu}{r}{d_1}) \phi = \Big(\mathbf{1}_{[K,K+1)}(\atL{b^{\bar\mu}}{r}{d_1})\big(\phi(|\mu|^{-\frac 1 2}\,\cdot\,)\big) \Big)(|\mu|^{\frac 1 2}\,\cdot\,).
\end{equation}
As stated in \cref{prop:rotation}, the function $\mu\mapsto R_\mu\in O(d_1)$ is in particular measurable. Hence, using \cref{eq:rotation-iii} and \cref{eq:rescaling-iii}, we obtain
\begin{align*}
\norm*{\mathbf{1}_{[K|\mu|,(K+1)|\mu|)}(L^\mu) }_{p\to 2}
& = \big\Vert\mathbf{1}_{[K|\mu|,(K+1)|\mu|)}(\atL{b^\mu}{r}{d_1}) \big\Vert_{p\to 2} \\
& = |\mu|^{\frac{d_1}2(\frac 1 p - \frac 1 2)}\big\Vert\mathbf{1}_{[K,K+1)}(\atL{b^{\bar\mu}}{r}{d_1})\big\Vert_{p\to 2}
\end{align*}
for almost all $\mu\in\g_{2,r}^*$. Let $S=\{\mu\in\g_2^*: |\mu|=1\}$ and $\textbf{B}$ be the image of $S$ under the map $\mu\mapsto \textbf{b}^\mu$. Since this map is continuous, the parameter set $\textbf{B}\subseteq [0,\infty)^N$ is compact. Hence we may apply \cref{thm:cluster} and get
\[
\big\Vert\mathbf{1}_{[K,K+1)}(\atL{b^{\bar\mu}}{r}{d_1}) \big\Vert_{p\to 2} \lesssim_{\mathbf B,\mathbf r,p} (K+1)^{\frac{d_1}2(\frac 1 p - \frac 1 2)-\frac 1 2}
\]
for all $\mu\in\g_2^*$, which yields \cref{eq:cluster-rescaled}.
\end{proof}

With the spectral cluster estimate \cref{eq:cluster-rescaled} we now prove the restriction type estimate \cref{eq:restriction-type-i}, which in combination with \cref{lem:restriction-type-p=1} yields \cref{thm:restriction-type}.

\begin{proof}[Proof of \cref{eq:restriction-type-i}]
Suppose that $1\le p\le \min\{p_{d_1},p_{d_2}\}$. We have to show \cref{eq:restriction-type-i} for $F:\R\to\C$ and $\chi:(0,\infty)\to\C$ being smooth cut-off functions, with $F$ being supported in the fixed compact set $A\subseteq (0,\infty)$ of \cref{thm:restriction-type}. Let $f$ be a Schwartz function on $G$. Using \cref{eq:joint-calculus-ii} and the Plancherel theorem on $L^2(\g_2)$, we obtain
\begin{align}
\norm*{ F(L)\chi(2^\ell U) f }^2_{L^2(G)}
& \sim \int_{\g_2^*} {\norm*{F(L^\mu)\chi(2^\ell |\mu|) f^\mu}_{L^2(\g_1)}^2} \, d\mu \notag \\
& \lesssim \int_{|\mu|\sim 2^{-\ell}} {\norm*{F(L^\mu) f^\mu}_{L^2(\g_1)}^2} \, d\mu. \label{eq:proof-restriction-i}
\end{align}
Moreover, orthogonality on $L^2(\g_1)$ yields
\begin{align}
& \norm*{F(L^\mu) f^\mu}_{L^2(\g_1)}^2
 = \sum_{K=0}^\infty {\norm*{F|_{[K|\mu|,(K+1)|\mu|)}(L^\mu) f^\mu}_{L^2(\g_1)}^2} \notag \\
& \le \sum_{K=0}^\infty {\norm*{F|_{[K|\mu|,(K+1)|\mu|)}}_\infty^2}\,  \norm*{\mathbf{1}_{[K|\mu|,(K+1)|\mu|)}(L^\mu) f^\mu}_{L^2(\g_1)}^2. \label{eq:proof-restriction-ii}
\end{align}
We may assume that $K|\mu|\lesssim_A 1$ and $(K+1)|\mu|\gtrsim_A 1$ since $F|_{[K|\mu|,(K+1)|\mu|)}=0$ for $\left[K|\mu|,(K+1)|\mu|\right) \cap A = \emptyset$. Then, for $|\mu|\sim 2^\ell$, we have $K\lesssim_A |\mu|^{-1}\sim 2^\ell$, and the spectral cluster estimate \cref{eq:cluster-rescaled} implies
\begin{align}
\norm*{\mathbf{1}_{[K|\mu|,(K+1)|\mu|)}(L^\mu) f^\mu}_{L^2(\g_1)}
 & \lesssim \left|\mu\right|^{\frac{d_1}2(\frac 1 p - \frac 1 2)}\left(K+1\right)^{\frac{d_1}2(\frac 1 p - \frac 1 2)-\frac 1 2} \left\| f^\mu \right\|_{L^p(\g_1)}  \notag \\
 & \lesssim \left|\mu\right|^{\frac 1 2} \left\| f^\mu \right\|_{L^p(\g_1)}. \label{eq:proof-restriction-iii}
\end{align}
Moreover, for $|\mu|\sim 2^{-\ell}$, we have
\[
|\mu| \sum_{K=0}^\infty {\norm*{F|_{[K|\mu|,(K+1)|\mu|)}}_\infty^2} \sim \norm*{F}_{2^{\ell},2}^2.
\]
Combining this estimate with \cref{eq:proof-restriction-i}, \cref{eq:proof-restriction-ii} and \cref{eq:proof-restriction-iii}, we get
\begin{equation}\label{eq:proof-restriction-v}
\norm*{ F(L)\chi(2^\ell U) f }^2_{L^2(G)} \lesssim \norm*{F}_{2^{\ell},2}^2 \int_{|\mu|\sim 2^{-\ell}}  {\norm*{f^\mu}_{L^p(\g_1)}^2}\,d\mu .
\end{equation}
Since $2/p\ge 1$, Minkowski's integral inequality yields
\begin{equation}\label{eq:proof-restriction-vi}
\int_{|\mu|\sim 2^{-\ell}} {\norm*{f^\mu}_{L^p(\g_1)}^2}\,d\mu
 \le \bigg(\int_{\g_1}\bigg( \int_{|\mu|\sim 2^{-\ell}} |f^\mu(x)|^2 \,d\mu\bigg)^{\frac p 2}\,dx \bigg)^{\frac 2 p}.
\end{equation}
Let $f_{x}:=f(x,\cdot)$ and $\widehat\cdot$ denote the Fourier transform on $\g_2$. Using polar coordinates and applying the Stein--Tomas restriction estimate \cite{To79} for the Euclidean sphere, we get
\begin{align}
\int_{|\mu|\sim 2^{-\ell}} |f^\mu(x)|^2 \,d\mu
 & = \int_{r\sim 2^{-\ell}} \int_{S^{d_2-1}} | \widehat{f_{x}}(r\omega)|^2 \, d\sigma(\omega) \, r^{d_2-1} \, dr\notag\\
 & = \int_{r\sim 2^{-\ell}} \int_{S^{d_2-1}} \big| \big(f_{x}(r^{-1}\,\cdot\,)\big)^\wedge(\omega)\big|^2 \, d\sigma(\omega)\, r^{-d_2-1} \, dr \notag \\
 & \lesssim \int_{r\sim 2^{-\ell}} {\norm*{f_{x}(r^{-1}\,\cdot\,)}_{L^p(\g_2)}^2\,r^{-d_2-1}} \, dr\notag \\
 & = \int_{r\sim 2^{-\ell}} r^{2d_2(\frac 1 p -\frac 1 2)-1}  \, dr \, \norm*{f_{x}}_{L^p(\g_2)}^2\notag \\
 & \sim 2^{-2\ell d_2(\frac 1 p -\frac 1 2)} \norm*{f_{x}}_{L^p(\g_2)}^2.\notag 
\end{align}
In combination with \cref{eq:proof-restriction-v} and \eqref{eq:proof-restriction-vi}, we obtain
\[
\norm*{ F(L)\chi(2^\ell U) f}_{L^2(G)}
\lesssim_{A,\chi} 2^{-\ell d_2(\frac 1p - \frac 1 2)} \norm{F}_{2^{\ell},2} \, \norm{f}_{L^p(G)},
\]
which yields \cref{eq:restriction-type-i}.
\end{proof}

\begin{bibdiv}
\begin{biblist}

\bib{Bo91}{article}{
      author={Bourgain, J.},
       title={Besicovitch type maximal operators and applications to {F}ourier
  analysis},
        date={1991},
        ISSN={1016-443X,1420-8970},
     journal={Geom. Funct. Anal.},
      volume={1},
      number={2},
       pages={147\ndash 187},
         url={https://doi.org/10.1007/BF01896376},
      review={\MR{1097257}},
}

\bib{Ca18}{misc}{
      author={Calzi, Mattia},
       title={Review of “{R}estriction theorems on {M}\'{e}tiver groups
  associated to joint functional calculus”},
        note={MathSciNet, MR3870762},
}

\bib{CaCi13}{article}{
      author={Casarino, Valentina},
      author={Ciatti, Paolo},
       title={A restriction theorem for {M}\'{e}tivier groups},
        date={2013},
        ISSN={0001-8708},
     journal={Adv. Math.},
      volume={245},
       pages={52\ndash 77},
         url={https://doi.org/10.1016/j.aim.2013.06.015},
      review={\MR{3084423}},
}

\bib{ChHeSi16}{article}{
      author={Chen, Peng},
      author={Hebisch, Waldemar},
      author={Sikora, Adam},
       title={Bochner-{R}iesz profile of anharmonic oscillator
  {$\mathcal{L}=-\frac{d^2}{dx^2}+|x|$}},
        date={2016},
        ISSN={0022-1236},
     journal={J. Funct. Anal.},
      volume={271},
      number={11},
       pages={3186\ndash 3241},
         url={https://doi.org/10.1016/j.jfa.2016.08.016},
      review={\MR{3554704}},
}

\bib{ChOu16}{article}{
      author={Chen, Peng},
      author={Ouhabaz, El~Maati},
       title={Weighted restriction type estimates for {G}rushin operators and
  application to spectral multipliers and {B}ochner-{R}iesz summability},
        date={2016},
        ISSN={0025-5874},
     journal={Math. Z.},
      volume={282},
      number={3-4},
       pages={663\ndash 678},
         url={https://doi.org/10.1007/s00209-015-1558-9},
      review={\MR{3473637}},
}

\bib{ChOuSiYa16}{article}{
      author={Chen, Peng},
      author={Ouhabaz, El~Maati},
      author={Sikora, Adam},
      author={Yan, Lixin},
       title={Restriction estimates, sharp spectral multipliers and endpoint
  estimates for {B}ochner-{R}iesz means},
        date={2016},
        ISSN={0021-7670},
     journal={J. Anal. Math.},
      volume={129},
       pages={219\ndash 283},
         url={https://doi.org/10.1007/s11854-016-0021-0},
      review={\MR{3540599}},
}

\bib{CoSi01}{article}{
      author={Cowling, Michael},
      author={Sikora, Adam},
       title={A spectral multiplier theorem for a sublaplacian on
  {$\mathrm{SU}(2)$}},
        date={2001},
        ISSN={0025-5874},
     journal={Math. Z.},
      volume={238},
      number={1},
       pages={1\ndash 36},
         url={https://doi.org/10.1007/PL00004894},
      review={\MR{1860734}},
}

\bib{Cy79}{article}{
      author={Cygan, Jacek},
       title={Heat kernels for class {$2$} nilpotent groups},
        date={1979},
        ISSN={0039-3223,1730-6337},
     journal={Studia Math.},
      volume={64},
      number={3},
       pages={227\ndash 238},
         url={https://doi.org/10.4064/sm-64-3-227-238},
      review={\MR{544727}},
}

\bib{Fo89}{book}{
      author={Folland, Gerald~B.},
       title={Harmonic analysis in phase space},
      series={Annals of Mathematics Studies},
   publisher={Princeton University Press, Princeton, NJ},
        date={1989},
      volume={122},
        ISBN={0-691-08527-7; 0-691-08528-5},
         url={https://doi.org/10.1515/9781400882427},
      review={\MR{983366}},
}

\bib{Gr81}{article}{
      author={Greenleaf, Allan},
       title={Principal curvature and harmonic analysis},
        date={1981},
        ISSN={0022-2518,1943-5258},
     journal={Indiana Univ. Math. J.},
      volume={30},
      number={4},
       pages={519\ndash 537},
         url={https://doi.org/10.1512/iumj.1981.30.30043},
      review={\MR{620265}},
}

\bib{Gu16}{article}{
      author={Guth, Larry},
       title={A restriction estimate using polynomial partitioning},
        date={2016},
        ISSN={0894-0347},
     journal={J. Amer. Math. Soc.},
      volume={29},
      number={2},
       pages={371\ndash 413},
         url={https://doi.org/10.1090/jams827},
      review={\MR{3454378}},
}

\bib{Hoe67}{article}{
      author={H{\"{o}}rmander, Lars},
       title={Hypoelliptic second order differential equations},
        date={1967},
        ISSN={0001-5962},
     journal={Acta Math.},
      volume={119},
       pages={147\ndash 171},
         url={https://doi.org/10.1007/BF02392081},
      review={\MR{222474}},
}

\bib{Hu76}{article}{
      author={Hulanicki, A.},
       title={The distribution of energy in the {B}rownian motion in the
  {G}aussian field and analytic-hypoellipticity of certain subelliptic
  operators on the {H}eisenberg group},
        date={1976},
        ISSN={0039-3223,1730-6337},
     journal={Studia Math.},
      volume={56},
      number={2},
       pages={165\ndash 173},
         url={https://doi.org/10.4064/sm-56-2-165-173},
      review={\MR{418257}},
}

\bib{Hu84}{article}{
      author={Hulanicki, Andrzej},
       title={A functional calculus for {R}ockland operators on nilpotent {L}ie
  groups},
        date={1984},
        ISSN={0039-3223},
     journal={Studia Math.},
      volume={78},
      number={3},
       pages={253\ndash 266},
         url={https://doi.org/10.4064/sm-78-3-253-266},
      review={\MR{782662}},
}

\bib{KoRi07}{article}{
      author={Koch, Herbert},
      author={Ricci, Fulvio},
       title={Spectral projections for the twisted {L}aplacian},
        date={2007},
        ISSN={0039-3223},
     journal={Studia Math.},
      volume={180},
      number={2},
       pages={103\ndash 110},
         url={https://doi.org/10.4064/sm180-2-1},
      review={\MR{2314091}},
}

\bib{KoTa05a}{article}{
      author={Koch, Herbert},
      author={Tataru, Daniel},
       title={Dispersive estimates for principally normal pseudodifferential
  operators},
        date={2005},
        ISSN={0010-3640},
     journal={Comm. Pure Appl. Math.},
      volume={58},
      number={2},
       pages={217\ndash 284},
         url={https://doi.org/10.1002/cpa.20067},
      review={\MR{2094851}},
}

\bib{KoTa05}{article}{
      author={Koch, Herbert},
      author={Tataru, Daniel},
       title={{$L^p$} eigenfunction bounds for the {H}ermite operator},
        date={2005},
        ISSN={0012-7094},
     journal={Duke Math. J.},
      volume={128},
      number={2},
       pages={369\ndash 392},
         url={https://doi.org/10.1215/S0012-7094-04-12825-8},
      review={\MR{2140267}},
}

\bib{KeTa98}{article}{
      author={Keel, Markus},
      author={Tao, Terence},
       title={Endpoint {S}trichartz estimates},
        date={1998},
        ISSN={0002-9327},
     journal={Amer. J. Math.},
      volume={120},
      number={5},
       pages={955\ndash 980},
  url={http://muse.jhu.edu/journals/american_journal_of_mathematics/v120/120.5keel.pdf},
      review={\MR{1646048}},
}

\bib{LiZh11}{article}{
      author={Liu, Heping},
      author={Wang, Yingzhan},
       title={A restriction theorem for the {H}-type groups},
        date={2011},
        ISSN={0002-9939},
     journal={Proc. Amer. Math. Soc.},
      volume={139},
      number={8},
       pages={2713\ndash 2720},
         url={https://doi.org/10.1090/S0002-9939-2011-10907-9},
      review={\MR{2801610}},
}

\bib{LiZh18}{article}{
      author={Liu, Heping},
      author={Zhang, An},
       title={Restriction theorems on {M}\'{e}tiver groups associated to joint
  functional calculus},
        date={2018},
        ISSN={0252-9599},
     journal={Chinese Ann. Math. Ser. B},
      volume={39},
      number={6},
       pages={1017\ndash 1032},
         url={https://doi.org/10.1007/s11401-018-0111-7},
      review={\MR{3870762}},
}

\bib{Ma10}{thesis}{
      author={Martini, Alessio},
       title={Algebras of differential operators on lie groups and spectral
  multipliers},
        type={Ph.D. Thesis},
        date={2010},
}

\bib{Ma11}{article}{
      author={Martini, Alessio},
       title={Spectral theory for commutative algebras of differential
  operators on {L}ie groups},
        date={2011},
        ISSN={0022-1236},
     journal={J. Funct. Anal.},
      volume={260},
      number={9},
       pages={2767\ndash 2814},
         url={https://doi.org/10.1016/j.jfa.2011.01.008},
      review={\MR{2772351}},
}

\bib{Ma15}{article}{
      author={Martini, Alessio},
       title={Spectral multipliers on {H}eisenberg-{R}eiter and related
  groups},
        date={2015},
        ISSN={0373-3114},
     journal={Ann. Mat. Pura Appl. (4)},
      volume={194},
      number={4},
       pages={1135\ndash 1155},
         url={https://doi.org/10.1007/s10231-014-0414-6},
      review={\MR{3357697}},
}

\bib{MaMue13}{article}{
      author={Martini, Alessio},
      author={M\"{u}ller, Detlef},
       title={{$L^p$} spectral multipliers on the free group {$N_{3,2}$}},
        date={2013},
        ISSN={0039-3223},
     journal={Studia Math.},
      volume={217},
      number={1},
       pages={41\ndash 55},
         url={https://doi.org/10.4064/sm217-1-3},
      review={\MR{3106049}},
}

\bib{MaMue14b}{article}{
      author={Martini, Alessio},
      author={M\"{u}ller, Detlef},
       title={Spectral multiplier theorems of {E}uclidean type on new classes
  of two-step stratified groups},
        date={2014},
        ISSN={0024-6115},
     journal={Proc. Lond. Math. Soc. (3)},
      volume={109},
      number={5},
       pages={1229\ndash 1263},
         url={https://doi.org/10.1112/plms/pdu033},
      review={\MR{3283616}},
}

\bib{MaMue16}{article}{
      author={Martini, Alessio},
      author={M\"{u}ller, Detlef},
       title={Spectral multipliers on 2-step groups: topological versus
  homogeneous dimension},
        date={2016},
        ISSN={1016-443X},
     journal={Geom. Funct. Anal.},
      volume={26},
      number={2},
       pages={680\ndash 702},
         url={https://doi.org/10.1007/s00039-016-0365-8},
      review={\MR{3513881}},
}

\bib{MueRi03}{article}{
      author={M\"{u}ller, Detlef},
      author={Ricci, Fulvio},
       title={Solvability of second-order left-invariant differential operators
  on the {H}eisenberg group satisfying a cone condition},
        date={2003},
        ISSN={0021-7670,1565-8538},
     journal={J. Anal. Math.},
      volume={89},
       pages={169\ndash 197},
         url={https://doi.org/10.1007/BF02893080},
      review={\MR{1981917}},
}

\bib{Mue90}{article}{
      author={M{\"{u}}ller, Detlef},
       title={A restriction theorem for the {H}eisenberg group},
        date={1990},
        ISSN={0003-486X},
     journal={Ann. of Math. (2)},
      volume={131},
      number={3},
       pages={567\ndash 587},
         url={https://doi.org/10.2307/1971471},
      review={\MR{1053491}},
}

\bib{Ni23}{misc}{
      author={Niedorf, Lars},
       title={{A}n ${L}^p$-spectral multiplier theorem with sharp $p$-specific
  regularity bound on {H}eisenberg type groups},
   publisher={arXiv},
        date={2022},
         url={https://arxiv.org/abs/2203.01307},
        note={To appear in J. Fourier Anal. Appl.},
}

\bib{Ni22}{article}{
      author={Niedorf, Lars},
       title={A {$p$}-specific spectral multiplier theorem with sharp
  regularity bound for {G}rushin operators},
        date={2022},
        ISSN={0025-5874},
     journal={Math. Z.},
      volume={301},
      number={4},
       pages={4153\ndash 4173},
         url={https://doi.org/10.1007/s00209-022-03029-0},
      review={\MR{4449743}},
}

\bib{RoSt76}{article}{
      author={Rothschild, Linda~P.},
      author={Stein, Elias~M.},
       title={Hypoelliptic differential operators and nilpotent groups},
        date={1976},
        ISSN={0001-5962},
     journal={Acta Math.},
      volume={137},
      number={3-4},
       pages={247\ndash 320},
         url={https://doi.org/10.1007/BF02392419},
      review={\MR{436223}},
}

\bib{So93}{book}{
      author={Sogge, Christopher~D.},
       title={Fourier integrals in classical analysis},
      series={Cambridge Tracts in Mathematics},
   publisher={Cambridge University Press, Cambridge},
        date={1993},
      volume={105},
        ISBN={0-521-43464-5},
         url={https://doi.org/10.1017/CBO9780511530029},
      review={\MR{1205579}},
}

\bib{St79}{incollection}{
      author={Stein, E.~M.},
       title={Some problems in harmonic analysis},
        date={1979},
   booktitle={Harmonic analysis in {E}uclidean spaces ({P}roc. {S}ympos. {P}ure
  {M}ath., {W}illiams {C}oll., {W}illiamstown, {M}ass., 1978), {P}art 1},
      series={Proc. Sympos. Pure Math.},
      volume={XXXV, Part 1},
   publisher={Amer. Math. Soc., Providence, RI},
       pages={3\ndash 20},
      review={\MR{545235}},
}

\bib{St86}{incollection}{
      author={Stein, E.~M.},
       title={Oscillatory integrals in {F}ourier analysis},
        date={1986},
   booktitle={Beijing lectures in harmonic analysis ({B}eijing, 1984)},
      series={Ann. of Math. Stud.},
      volume={112},
   publisher={Princeton Univ. Press, Princeton, NJ},
       pages={307\ndash 355},
      review={\MR{864375}},
}

\bib{St77}{article}{
      author={Strichartz, Robert~S.},
       title={Restrictions of {F}ourier transforms to quadratic surfaces and
  decay of solutions of wave equations},
        date={1977},
        ISSN={0012-7094,1547-7398},
     journal={Duke Math. J.},
      volume={44},
      number={3},
       pages={705\ndash 714},
         url={http://projecteuclid.org/euclid.dmj/1077312392},
      review={\MR{512086}},
}

\bib{SiYaYa14}{article}{
      author={Sikora, Adam},
      author={Yan, Lixin},
      author={Yao, Xiaohua},
       title={Sharp spectral multipliers for operators satisfying generalized
  {G}aussian estimates},
        date={2014},
        ISSN={0022-1236},
     journal={J. Funct. Anal.},
      volume={266},
      number={1},
       pages={368\ndash 409},
         url={https://doi.org/10.1016/j.jfa.2013.08.028},
      review={\MR{3121735}},
}

\bib{Ta03}{article}{
      author={Tao, T.},
       title={A sharp bilinear restrictions estimate for paraboloids},
        date={2003},
        ISSN={1016-443X,1420-8970},
     journal={Geom. Funct. Anal.},
      volume={13},
      number={6},
       pages={1359\ndash 1384},
         url={https://doi.org/10.1007/s00039-003-0449-0},
      review={\MR{2033842}},
}

\bib{Ta04}{incollection}{
      author={Tao, Terence},
       title={Some recent progress on the restriction conjecture},
        date={2004},
   booktitle={Fourier analysis and convexity},
      series={Appl. Numer. Harmon. Anal.},
   publisher={Birkh\"{a}user Boston, Boston, MA},
       pages={217\ndash 243},
         url={https://doi.org/10.1198/106186003321335099},
      review={\MR{2087245}},
}

\bib{Ta99}{article}{
      author={Tao, Terence},
       title={The {B}ochner-{R}iesz conjecture implies the restriction
  conjecture},
        date={1999},
        ISSN={0012-7094},
     journal={Duke Math. J.},
      volume={96},
      number={2},
       pages={363\ndash 375},
         url={https://doi.org/10.1215/S0012-7094-99-09610-2},
      review={\MR{1666558}},
}

\bib{Ta74}{book}{
      author={Taylor, Michael},
       title={Pseudo differential operators},
      series={Lecture Notes in Mathematics},
   publisher={Springer-Verlag, Berlin-New York},
        date={1974},
      volume={Vol. 416},
      review={\MR{442523}},
}

\bib{Th91}{article}{
      author={Thangavelu, S.},
       title={Restriction theorems for the {H}eisenberg group},
        date={1991},
        ISSN={0075-4102},
     journal={J. Reine Angew. Math.},
      volume={414},
       pages={51\ndash 65},
         url={https://doi.org/10.1515/crll.1991.414.51},
      review={\MR{1092623}},
}

\bib{Th93}{book}{
      author={Thangavelu, Sundaram},
       title={Lectures on {H}ermite and {L}aguerre expansions},
      series={Mathematical Notes},
   publisher={Princeton University Press, Princeton, NJ},
        date={1993},
      volume={42},
        ISBN={0-691-00048-4},
      review={\MR{1215939}},
}

\bib{To75}{article}{
      author={Tomas, Peter~A.},
       title={A restriction theorem for the {F}ourier transform},
        date={1975},
        ISSN={0002-9904},
     journal={Bull. Amer. Math. Soc.},
      volume={81},
       pages={477\ndash 478},
         url={https://doi.org/10.1090/S0002-9904-1975-13790-6},
      review={\MR{358216}},
}

\bib{To79}{inproceedings}{
      author={Tomas, Peter~A.},
       title={Restriction theorems for the {F}ourier transform},
        date={1979},
   booktitle={Harmonic analysis in {E}uclidean spaces},
      series={Proc. Sympos. Pure Math., vol. XXXV, Part 1},
   publisher={Amer. Math. Soc., Providence, RI},
       pages={111\ndash 114},
      review={\MR{545245}},
}

\end{biblist}
\end{bibdiv}

\end{document}